\documentclass{amsart}

\RequirePackage{amsmath} \RequirePackage{amssymb}
\usepackage{amscd,latexsym,amsthm,amsfonts,amssymb,amsmath,amsxtra}
\usepackage[all]{xy}
\usepackage[colorlinks=true, urlcolor=blue,bookmarks=true,bookmarksopen=true,
citecolor=blue,]{hyperref}


\newcommand{\vphi}{\varphi}
\newcommand{\ra}{\rightarrow}

    \newcommand{\BA}{{\mathbb {A}}} 
    \newcommand{\BC}{{\mathbb {C}}} 
     
    \newcommand{\BG}{{\mathbb {G}}}

    \newcommand{\BQ}{{\mathbb {Q}}} \newcommand{\BR}{{\mathbb {R}}}

     \newcommand{\BZ}{{\mathbb {Z}}}
    \newcommand{\CA}{{\mathcal {A}}} 
     
    \newcommand{\CE}{{\mathcal {E}}} 
    \newcommand{\CG}{{\mathcal {G}}} \newcommand{\CH}{{\mathcal {H}}}

    \newcommand{\CO}{{\mathcal {O}}} 
     \newcommand{\CR}{{\mathcal {R}}}
    \newcommand{\CS}{{\mathcal {S}}} 
    \newcommand{\CU}{{\mathcal {U}}} 
    \newcommand{\CW}{{\mathcal {W}}}

    \newcommand{\Ind}{{\mathrm{Ind}}}

    \newcommand{\ord}{{\mathrm{ord}}} \newcommand{\rank}{{\mathrm{rank}}}
     
    \renewcommand{\Re}{{\mathrm{Re}}} 
    \newcommand{\Res}{{\mathrm{Res}}} 
      
      \newcommand{\fp}{{\mathfrak {p}}}

     \newcommand{\tr}{{\mathrm{tr}}}
      
    \newcommand{\vol}{{\mathrm{vol}}}

    \newcommand{\pair}[1]{\langle {#1} \rangle}

    \newcommand{\bs}{\backslash}
    \theoremstyle{plain}
    \newtheorem{thm}{Theorem}[section] 
    \newtheorem{lem}[thm]{Lemma}  \newtheorem{prop}[thm]{Proposition}


    \newcommand{\SHer}{{\mathrm{SHer}}}

  \numberwithin{equation}{section}

 \setlength{\oddsidemargin}{0in}
 \setlength{\evensidemargin}{0in}
 \setlength{\textwidth}{6.5in}
 \setlength{\textheight}{9.5in}
 \setlength{\parskip}{1.0mm}
\addtolength{\textwidth}{-2cm} \addtolength{\hoffset}{1.5cm}
\addtolength{\textheight}{-2cm} \addtolength{\voffset}{0cm}

\begin{document}

\title{A weak second term identity of the regularized Siegel-Weil formula for unitary groups}
\author{Wei Xiong}
\address{College of Mathematics and Econometrics, Hunan University, Changsha 410082, China}
\email{weixiong@amss.ac.cn}
\keywords{}
\thanks{}
\date{}
\begin{abstract}
Following W.T.Gan and S.Takeda, we obtain a weak second term identity of the regularized Siegel-Weil formula for the unitary dual pair $(U(n,n),U(V))$, where $V$ is a split hermitian space of dimension $2r$ with $r+1\leq n \leq 2r-1$. As an application, we obtain a Rallis inner product formula for theta lifts from $U(W)$ to $U(V)$ for a skew-hermitian space $W$ of dimension $n$.
\end{abstract}
\maketitle
\tableofcontents
\pagestyle{headings}
\markboth{}{}

\section{Introduction}
Let $F$ be a totally real number field with adele ring $\BA$, and let $E$ be a totally imaginary quadratic field extension of $F$. Let $G=U(n,n)$ be the quasi-split unitary group in $2n$ variables and let $H=U(V)$ be the unitary group of a split hermitian space $V$ of dimension $2r$ over $E$. Then $G$ and $H$ form a unitary dual pair. Fix a non-trivial additive character $\psi: \BA/F \ra \BC^{\times}$, then $G(\BA)\times H(\BA)$ acts on the Schwartz space $\CS(V^n(\BA))$ via the Weil representation $\omega=\omega_{\psi}$. Let $S(V^n(\BA))$ be the subspace of $\CS(V^n(\BA))$ consisting of Schwartz functions on $V^n(\BA)$ which correspond to polynomials in the Fock model at every archimedean place of $F$.

Let $P$ be the Siegel parabolic subgroup of $G$ and let $K_G$ be the standard maximal compact subgroup of $G(\BA)$. For $\vphi \in S(V^n(\BA))$, define the Siegel Eisenstein series on $G(\BA)$ by
$$E^{(n,n)}(g,s;\Phi_{\vphi})=\sum_{\gamma \in P(F)\bs G(F)}\Phi_{\vphi}(\gamma g,s)$$
for $g\in G(\BA)$, where $\Phi_{\vphi}\in \Ind_{P(\BA)}^{G(\BA)}|\cdot|^s$ is the standard section associated to $\vphi$.

On the other hand, for $\vphi\in S(V^n(\BA))$, define the theta integral on $G(\BA)$ by
$$I(g,\vphi)=\int_{H(F)\bs H(\BA)}\theta(g,h;\vphi)dh,$$
where
$$\theta(g,h;\vphi)=\sum_{x\in V^n(F)}\omega(g,h)\vphi(x).$$
Weil \cite{W} showed that the theta integral converges absolutely for all $\vphi$ if and only if $n<r$. Moreover, if $n<r$, then the Siegel Eisenstein series $E^{(n,n)}(g,s;\Phi_{\vphi})$ is holomorphic at $s=\rho_{n,r}=(2r-n)/2$, and Weil \cite{W} proved the following so-called Siegel-Weil formula which identifies the value at $s=\rho_{n,r}$ of the Siegel Eisenstein series with the theta integral:
$$ E^{(n,n)}(g, \rho_{n,r};\Phi_{\vphi})=I(g,\vphi).$$

On the other hand, if $n\geq r$, then the theta integral may not converge. Following the ideas of Kudla-Rallis \cite{KR2} and Ichino \cite{Ich}, we will define a regularized theta integral
$\CE^{(n,r)}(g,s;\vphi)$ whose behavior at $s=r/2$ recovers the theta integral for good $\vphi$.

Suppose that $n\geq 2r$. Then the regularized theta integral has a simple pole at $s=\rho_r=r/2$, while the Siegel Eisenstein series is holomorphic at $s=\rho_{n,r}=(2r-n)/2$. Ichino \cite{Ich} and Yamana \cite{Yam} proved the following regularized Siegel-Weil formula:
$$ E^{(n,n)}(g,\rho_{n,r};\Phi_{\vphi})=2~ \Res_{s=\rho_r}\CE^{(n,r)}(g,s;\vphi).$$

Now suppose that $r+1 \leq n \leq 2r-1$ and fix $\vphi \in S(V^n(\BA))$. Then the regularized theta integral $\CE^{(n,r)}(g,s;\vphi)$ has a double pole at $s=\rho_r=r/2$, and we write its Laurent expansion at $s=\rho_r$ as
$$\CE^{(n,r)}(g,s;\vphi)=\frac{B_{-2}^{(n,r)}(\vphi)(g)}{(s-\rho_r)^2}+\frac{B_{-1}^{(n,r)}(\vphi)(g)}{s-\rho_r}+ B_0^{(n,r)}(\vphi)(g)+O(s-\rho_r).$$

On the other hand, the Siegel Eisenstein series $E^{(n,n)}(g,s;\Phi_{\vphi})$ has a simple pole at $s=\rho_{n,r}=(2r-n)/2$, and we write its Laurent expansion there as
$$E^{(n,n)}(g,s;\Phi_{\vphi})=\frac{A^{(n,r)}_{-1}(\vphi)(g)}{s-\rho_{n,r}}+A^{(n,r)}_0(\vphi)(g)+O(s-\rho_{n,r}).$$
Note that each $B^{(n,r)}_d$ and each $A^{(n,r)}_d$ defines a map from $S(V^n(\BA))$ to the space of automorphic forms on $G$ respectively.
The regularized Siegel-Weil formula predicts that there is a first term identity which relates $B_{-2}^{(n,r)}(\vphi)$ with $A_{-1}^{(n,r)}(\vphi)$, and more importantly, there is a second term identity which relates $A^{(n,r)}_0(\vphi)$ with $B^{(n,r)}_{-1}(\vphi)$. In this paper, we prove such identities for some good Schwartz functions. The results are as follows.

We first obtain a weak first term identity. See Section \ref{Section 2.5} for the definition of the spherical Schwartz function.\\
 
{\bf Weak first term identity} (see Propsition \ref{Prop 5.3}):
Suppose $r+1\leq n \leq 2r-1$. If $\vphi\in S(V^n(\BA))$ is in the $G(\BA)$-span of the spherical Schwartz function, then
$$ A^{(n,r)}_{-1}(\vphi)=b_{n,r} B^{(n,r)}_{-2}(\vphi)$$
for some non-zero constant $b_{n,r}$.\\

We also obtain a weak second term identity. See Section \ref{Section 2.6} for the definition of Ikeda map.\\

{\bf Weak second term identity} (see Theorem \ref{Thm 5.4}):
Suppose $r+1\leq n \leq 2r-1$. If $\vphi\in S(V^n(\BA))$ is in the $G(\BA)$-span of the spherical Schwartz function, then
$$\begin{aligned}
&B^{(n,r)}_{-1}(\vphi)+|D_E|^{\frac{n(n-2r)}{2}} \prod_{i=0}^{2r-n-1}\left(\frac{\xi_E(-i)}{\xi_F(2r-2i-1,\eta)\xi_F(2r-2i)}\right) B^{(n,n-r)}_0(\mathrm{Ik}(\vphi))\\
& \equiv A^{(n,r)}_0(\vphi) \mod \mathrm{Im}(A^{(n,r)}_{-1}),
\end{aligned}$$
where $\mathrm{Ik}(\vphi) \in S(V_c^n(\BA))$ is the image of $\vphi$ under the Ikeda map, $V_c$ is the split hermitian space of dimension $2(n-r)$, and $\mathrm{Im}(A^{(n,r)}_{-1})$ is the image of the $A^{(n,r)}_{-1}$ map.\\

As an application of the weak second term identity of the regularized Siegel-Weil formula, we obtain a Rallis inner product formula for theta lifts from unitary groups.

Consider the theta lifting from $U(W)$ to $U(V)$, where $W$ is a skew-hermitian space of dimension $n$. Suppose the Weil representation of $U(W)(\BA)\times U(V)(\BA)$ acts on the space $\CS(X(\BA))$, where $X$ is a vector space of dimension $nr$ over $E$. Let $\pi$ be a cuspidal automorphic representation of $U(W)(\BA)$. For $f\in \pi$ and $\phi\in \CS(X(\BA))$, the theta lift $\theta_{2r}(f,\phi)$ of $f$ to $U(V)(\BA)$ (with respect to $\phi$) is the function on $U(V)(\BA)$ given by
$$\theta_{2r}(f,\phi)(h)=\int_{U(W)(F)\bs U(W)(\BA)}\left(\sum_{x\in X(F)}\omega(g,h)\phi(x)\right)f(g)dg $$
for $h\in U(V)(\BA)$. The global theta lift of $\pi$ to $U(V)$, denoted by $\Theta_{2r}(\pi)$, is the space of all the functions $\theta_{2r}(f,\phi)$ for all $f\in \pi$ and $\phi \in \CS(X(\BA))$.

We obtain the following inner product formula for $r+1\leq n \leq 2r-1$. See Section \ref{Section 6} for more notation.\\

{\bf Inner product formula} (see Theorem \ref{Thm 6.7}):
Suppose $r+1\leq n \leq 2r-1$. Let $\pi$ be a cuspidal automorphic representation of $U(W)(\BA)$ with $\Theta_{2(r-1)}(\pi)=0$. Let $\vphi\in S(V^n(\BA))$ be such that $\Phi_{\vphi}$ is factorizable as $\otimes'\Phi_v$. Write $\vphi=\sum_{i}\sigma(\phi_{1,i}\otimes \overline{\phi_{2,i}})$, where each $\phi_{1,i}$ or $\phi_{2,i}$ is in $\CS(X(\BA))$. Then for $f_1, f_2\in \pi$, we have
$$\begin{aligned}
&\quad \sum_{i}\pair{\theta_{2r}(f_1, \phi_{1,i}), \theta_{2r}(f_2,\phi_{2,i})}\\
&=\frac{1}{d_n^S(\rho_{n,r})}\mathrm{Val}_{s=\rho_{n,r}}\left(L^S(s+\frac{1}{2},\pi)\prod_{v\in S}Z_v(s,f_{1,v},f_{2,v},\Phi_v,\pi_v) \right),
\end{aligned}$$
where $\mathrm{Val}_{s=s_0}$ means the constant term of the Laurent expansion at $s=s_0$, and $S$ can be chosen to be any finite set of places of $F$ such that if $v\notin S$, then $v$ is finite and everything is unramified at $v$.\\

These results are analogs of the results of Gan-Takeda \cite{GT} for the orthogonal-symplectic dual pair. As early as in 1989, Kudla \cite{K2} found a spherical second term identity for the symplectic-orthogonal dual pair, but the proof has never appeared. Subsequently, the second term identities of the regularized Siegel-Weil formula have been established for some small groups. For example, Kudla-Rallis-Soudry \cite{KRS} obtained a second term identity for the symplectic-orthogonal dual pair $(Sp(4),O(4))$, and V.Tan \cite{Tan1} obtained a second term identity for the unitary dual pair $(U(2,2), U(3))$. In 2009, Gan and Takeda \cite{GT} proved a weak second term identity for the orthogonal-symplectic dual pair. Motivated by studying theta lifting from $U(1)$ to $U(1,1)$, we proved a second term identity for the unitary dual pair $(U(1,1), U(1,1))$ in December 2011 (see \cite{X1}). Then we started to study the second term identity for a general unitary dual pair, and we succeeded in proving a weak second term identity for split unitary groups by following the method of Gan-Takeda \cite{GT}. After we posted the first version of this paper on arXiv in May 2012, W.T.Gan and S.Takeda informed us that, together with Y.Qiu, they have established the second term identity for all classical groups! \footnote{Their results have appeared as \cite{GQT}.} Thus our results are just special cases of their results. Nevertheless, we hope that our results, being independent, may serve as a check on their general results.

The contents of this paper are as follows. In Section \ref{Section 2}, we give some notation and preliminaries. In Section \ref{Section 3}, we define the regularized theta integral and review the regularized Siegel-Weil formula in the case $n\geq 2r$. In Section \ref{Section 4}, we prove the second term identity of the regularized Siegl-Weil formula for the spherical Schwartz function in the case $r+1\leq n \leq 2r-1$. In Section \ref{Section 5}, we prove the weak second term identity. In Section \ref{Section 6}, we prove the inner product formulae. The last section, Section \ref{Appendix}, is an appendix in which we prove Proposition \ref{Prop 4.1}.

Before concluding this Introduction, we emphasize that this paper is greatly indebted to the works of Gan-Takeda \cite{GT}, Kudla-Rallis \cite{KR2}, and Ichino \cite{Ich}. Precisely, we follow Gan-Takeda's inductive method to establish the weak second term identity, and we follow Kudla-Rallis and Ichino to define the regularized theta integral.\\

{\bf Acknowledgements}. I would like to thank Professor Ye Tian, Professor Song Wang, and Professor Linsheng Yin for their kind advice and constant encouragement. And I would like to thank Professor Wee Teck Gan for encouragement and suggestions after the first version of this paper appeared. In addition, I would like to thank an anonymous referee of Math.Z. for a very careful reading of the manuscript and many comments leading to a much improved exposition. During the preparation of the final version of this paper, I was supported by The Young Teachers Program of Hunan University.

\section{Notation and preliminaries}\label{Section 2}
\subsection{Spaces and groups}
We assume throughout this paper that $F$ is a totally real number field and $E$ is a totally imaginary quadratic field extension of $F$, as in Ichino's paper \cite{Ich}, in order to use several results in \cite{Ich} directly. Let $x\mapsto \bar{x}$ be the non-trivial automorphism of $E$ over $F$.

Let $\CW$ be the space of row vectors $M_{1, 2n}(E)$ with a skew-hermitian form $\pair{,}: \CW\times \CW \ra E$ given by
$$\pair{w_1,w_2}=w_1 \begin{pmatrix}  & 1_n\\ -1_n & \end{pmatrix} {}^t\!\bar{w}_2$$
for $w_1, w_2\in \CW$, where ${}^t\!w$ denotes the transpose of the matrix $w$.

The unitary group $U(\CW)$ of $\CW$ is given by
$$ U(\CW)=\left\{g\in \mathrm{R}_{E/F}GL_{2n}| g \begin{pmatrix}  & 1_n\\ -1_n&  \end{pmatrix} {}^t\!\bar{g}=\begin{pmatrix} & 1_n\\-1_n & \end{pmatrix} \right\}.$$

Let $V=M_{2r,1}(E)$ be the space of column vectors with a hermitian form $(,): V\times V \ra E$ given by
$$ (v_1,v_2)={}^t\!\bar{v}_1 \begin{pmatrix} & 1_r\\ 1_r & \end{pmatrix} v_2$$
for $v_1, v_2\in V$. Then the unitary group $U(V)$ of $V$ is given by
$$ U(V)=\left\{h\in \mathrm{R}_{E/F}GL_{2r}| {}^t\! \bar{h} \begin{pmatrix} & 1_r\\1_r& \end{pmatrix} h=\begin{pmatrix} & 1_r\\1_r& \end{pmatrix} \right\}.$$

For simplicity, we often write $G=U(\CW)=U(n,n)$ and $H=U(V)$.

Let $P=M_n N_n$ be the Siegel parabolic subgroup of $G=U(n,n)$, where the Levi factor $M_n$ is
$$ M_n=\left\{m(a)=\begin{pmatrix} a & \\ & {}^t\!\bar{a}^{-1}\end{pmatrix}| a\in R_{E/F}GL_{n} \right\},$$
and the unipotent radical $N_n$ is
$$ N_n=\left\{u(b)=\begin{pmatrix}1_n & b\\ & 1_n\end{pmatrix}| b\in \mathrm{Her}_n \right\},$$
where $\mathrm{Her}_n=\{x\in R_{E/F}M_n|  {}^t\!\bar{x}=x\}$ is the space of $n\times n$ hermitian matrices.

For each place $v$ of $F$, let $F_v$ be the completion of $F$ at $v$, and let $E_v=E\otimes_F F_v$. Then $E_v$ is a quadratic field extension of $F_v$ if $v$ is inert, and $E_v=F_v\oplus F_v$ if $v$ is split. For a finite place $v$ of $F$, let $\CO_v$ be the ring of integers of $F_v$, $\fp_v$ the maximal ideal of $\CO_v$, and $q_v$ the cardinality of the residue field of $F_v$. If $E_v$ is a quadratic field extension of $F_v$, let $\CO_{E_v}$ be the ring of integers of $E_v$, $\fp_{E_v}$ the maximal ideal of $\CO_{E_v}$, and $q_{E_v}$ the cardinality of the residue field of $E_v$. If $E_v=F_v\oplus F_v$, put $\CO_{E_v}=\CO_v\oplus \CO_v$, $\fp_{E_v}=\fp_v\oplus \fp_v$, and $q_{E_v}=q_v^2$.

Let $\BA$ be the adele ring of $F$, and $\BA_E$ the adele ring of $E$. Fix a non-trivial additive character $\psi=\otimes' \psi_v: \BA/F \ra \BC^{\times}$. We choose various measures as follows. For any place $v$ of $F$, fix the Haar measure $dx_v$ on $F_v$ to be self-dual with respect to $\psi_v$. If $v$ is finite, then $\vol(\CO_v)=q_v^{-c_v/2}$, where $c_v=\ord(\psi_v)$ is the order of $\psi_v$ (i.e. the largest integer $d$ such that $\psi_v$ is trivial on $\fp_v^{-d}$). If $E_v$ is a quadratic field extension of $F_v$, fix the Haar measure on $E_v$ to be self-dual with respect to $\psi_{E_v}:=\psi_v \circ \tr_{E_v/F_v}$. If $E_v=F_v\oplus F_v$, fix the Haar measure on $E_v$ to be the product of the fixed measures on $F_v$. We take the Haar measures on $\BA$ and on $\BA_E$ to be the product measure. Then the measure on $\BA$ is self-dual with respect to $\psi$, and the measure on $\BA_E$ is self-dual with respect to $\psi_E:=\psi \circ \tr_{E/F}$.

For a place $v$ of $F$, let $|~|_v$ be the standard absolute value on $F_v$. If $v$ is a finite place and $\varpi_v$ is a uniformizer of $F_v$, then $|\varpi_v|_v=q_v^{-1}$; if $v$ is a real place, then $|~|_v$ is the usual absolute value on $\BR$. Define an absolute value on $E_v$ by $\|x\|_{E_v}=|x \bar{x}|_{v}$. Denote the corresponding absolute value on $\BA$ by $|~|$, and the absolute value on $\BA_E$ by $\|~\|$.

For each place $v$ of $F$, let $K_{G_v}$ be the standard maximal compact subgroup of $G_v:=G(F_v)$ given by
$$ K_{G_v}=
\begin{cases}
G_v\cap GL_{2n}(\CO_{E_v}) & \text{if $v$ is finite,}\\
G_v\cap U(2n) & \text{if $v$ is real,}
\end{cases}$$
where $U(m)=\{g\in GL_m(\BC): {}^t\! \bar{g} g=1_m\}$ is the usual unitary group of rank $m$.

Let $K_G=\prod_v K_{G_v}$ be the standard maximal compact subgroup of $G(\BA)$. Then there is Iwasawa decomposition $G(\BA)=P(\BA)K_G$.

For $g=pk \in G(\BA)$ with $p=m(a)u(b)\in P(\BA)$ and $k\in K_G$, put
$$ |a(g)|=\|\det a\|.$$

For an algebraic group $\CG$ over $F$, we often write $[\CG]$ for the quotient group $\CG(F)\bs \CG(\BA)$. We write $\CA(\CG)$ for the space of automorphic forms on $\CG$ (without the $K_G$-finite condition).

\subsection{Zeta functions}
For a number field $k$, let $$\zeta_k(s)=\Gamma_{\BR}(s)^{r_1}\Gamma_{\BC}(s)^{r_2} \prod_{v\nmid \infty} (1-q_v^{-s})^{-1}$$ be the complete zeta function of $k$, where $q_v$ is the cardinality of the residue field of $k$ at a finite place $v$, $r_1$ (resp. $r_2$) is the number of real (resp. complex) places of $k$,
$$\Gamma_{\BR}(s)=\pi^{-s/2}\Gamma(s/2),$$
and
$$\Gamma_{\BC}(s)=2(2\pi)^{-s}\Gamma(s).$$
Let $$\xi_k(s)=|D_k|^{s/2}\zeta_k(s)$$ be the normalized zeta function of $k$, where $D_k$ is the discriminant of $k$.

Then there is functional equation
$$\xi_k(s)=\xi_k(1-s).$$

Note that $\xi_k(s)$ is holomorphic in $s$ except for $s=0$ and $s=1$, at which it has simple poles. By abuse of notation, we write $\xi_k(0)=\Res_{s=0}\xi_k(s)$ and $\xi_k(1)=\Res_{s=1}\xi_k(s)$.

A {\sl Hecke character} $\chi$ of $k$ is a (unitary) character $\chi: \BA_k^{\times}/k^{\times}\ra \BC^1$, where $\BA_k$ is the adele ring of $\BA$ and $\BC^1=\{z\in \BC:|z|=1\}$.

Return to the quadratic extension $E/F$. Let $\eta=\eta_{E/F}:\BA^{\times}/F^{\times}\ra \{\pm 1\}$ be the quadratic Hecke character of $F$ associated to $E/F$ by class field theory. Let $L_F(s,\eta)=\prod_{v\leq \infty}L(s,\eta_v)$ be the complete Hecke L-function of $\eta$. Put $\xi_F(s,\eta)=|D_F N_{F/\BQ}(\mathfrak{f}_{\eta})|^{s/2}L_F(s,\eta)$, where $\mathfrak{f}_{\eta}$ is the conductor of $\eta$. Then $\xi_E(s)=\xi_F(s)\xi_F(s,\eta)$.

\subsection{Weil representation}
We use the trivial Hecke character of $E$ to construct the Weil representation of $U(\CW)\times U(V)$, since both $\CW$ and $V$ are of even dimension (see \cite{HKS,K1}). Then the Weil representation $\omega$ of $G(\BA)\times H(\BA)$ can be realized in the Schrodinger model $\CS(V^n(\BA))$, or in the mixed model $\CS(\CW^r(\BA))$. Note that $V^n=M_{2r,n}(E)$ and $\CW^r =M_{r,2n}(E)$.

These two models of the Weil representaion are related by the partial Fourier transform
$$\begin{aligned}
&\CS(V^n(\BA)) \ra \CS(\CW^r(\BA)),\\
&\quad \quad \quad \quad \vphi \mapsto \hat{\vphi},
\end{aligned}$$
where
$$\hat{\vphi}(a,b)=\int_{M_{r,n}(\BA_E)}\vphi\begin{pmatrix}x\\a \end{pmatrix} \psi(\tr_{E/F}\tr(b {}^t\!\bar{x}))dx$$
for $(a,b)\in M_{r,2n}(\BA_E)$ with $a,b \in M_{r,n}(\BA_E)$ (see \cite{Ich} p.250), and we take the self-dual Haar measure on $\BA_E$ with respect to the character $\psi_E:=\psi \circ \tr_{E/F}$.

\subsection{Degenerate principal series}
For $1\leq r \leq n$, let $P_r$ be the parabolic subgroup of $G=U(\CW)$ stabilizing the totally isotropic subspace $\{(0_n,a_1,\ldots,a_r,0_{n-r}|a_i \in E\}$ of $\CW$. Then $P_r$ has Levi decomposition $P_r=N_r M_r$ with Levi factor $M_r\cong R_{E/F}GL_r \times U(n-r,n-r)$.
For $p=u m_r(a,g)\in P_r(\BA)$, let $|a_r(p)|=\|\det a\|$. We have $G(\BA)=P_r(\BA) K_G$, where $K_G$ is the standard maximal compact subgroup of $G(\BA)$.

For $s\in \BC$, the {\sl degenerate principal series} $I^{n,r}(s)=\Ind_{P_r(\BA)}^{G(\BA)}|\cdot|^s$ consists of smooth functions $f(-,s): G(\BA) \ra \BC$ such that
\begin{itemize}
\item $f(pg,s)=|a_r(p)|^{s+\frac{2n-r}{2}}f(g,s)$ for $p\in P_r(\BA), g\in G(\BA)$;

\item $f(-,s)$ is right $K_G$-finite.
\end{itemize}
An element in $I^{n,r}(s)$ is called a {\sl section}. A {\sl standard section} is one whose restriction to $K_G$ is independent of $s$. A {\sl spherical section} is one whose restriction to $K_G$ is a constant. A {\sl holomorphic section} is one which is holomorphic with respect to $s\in \BC$, i.e. $f(g,s)$ is holomorphic in $s$ for every $g\in G(\BA)$.

\subsection{Spherical Schwartz function}\label{Section 2.5}
Following \cite{GT} (see also \cite{GI} p.39), we define the {\sl spherical Schwartz function}
$$\vphi^0=\otimes_v \vphi_v^0\in \CS(V^n(\BA))=\CS(M_{2r,n}(\BA_E))$$
to be such that the partial Fourier transform  $\hat{\vphi}_v^0$ is
\begin{itemize}
\item the characteristic function of $M_{r,2n}(\CO_{E_v})$ if $v$ is finite,

\item the Gaussian if $v$ is archimedean, i.e. $\hat{\vphi}_v^0(x)=\exp(-2\pi \tr(x{}^t\!\bar{x}))$ for $x\in M_{r,2n}(\BC)$.
\end{itemize}
\medskip
It is easy to see that if $v$ is finite, then $\vphi^0_v$ is equal to
$$q_{E_v}^{-rn \cdot c_{E_v}/2}\times \text{the characteristic function of $M_{r,n}(\mathfrak{p}_{E_v}^{-c_{E_v}})\oplus M_{r,n}(\CO_{E_v})$},$$
where $c_{E_v}$ is the order of $\psi_{E_v}:=\psi_v \circ \tr_{E_v/F_v}$ is $v$ is finite inert, and $c_{E_v}=c_v$ is the order of $\psi_v$ if $v$ is finite split;
and if $v$ is archimedean, then $\vphi^0_v$ is the Gaussian given by
$$\vphi_v^0(x)=\exp(-2\pi \tr({}^t\!\bar{x}x)), \quad \quad \text{for $x\in M_{2r,n}(\BC)$}.$$

Note that $\vphi^0(0)=\mathfrak{D}_{E/F}^{-rn/2}|D_F|^{-rn}=|D_E|^{-rn/2}$.

Following \cite{Yam}, let $S(V^n(\BA))$ be the subspace of $\CS(V^n(\BA))$ consisting of Schwartz functions on $V^n(\BA)$ which correspond to polynomials in the Fock model of the Weil representation at every archimedean place of $F$. Then $S(V^n(\BA))=\otimes' S(V_v^n)$, where $S(V_v^n)=\CS(V_v^n)$ is the space of locally constant functions on $V_v^n$ if $v$ is non-archimedean, and $S(V_v^n)$ is the space of polynomials in the Fock model if $v$ is real.

For convenience, we explain the space $S(V_v^n)$ of polynomials in the Fock model if $v$ is real. See \cite{Ad} for relevant material on the Fock model. Suppose $V_v$ is a hermitian space over $\BC$ of signature $(p,q)$. Let $\{e_1,\ldots,e_p, f_1,\ldots, f_q\}$ be a standard basis of $V_v$ with $(e_i,e_j)=\delta_{ij}$, $(f_i,f_j)=\delta_{ij}$, and $(e_i,f_j)=0$, and write $V_v=V_{+}\oplus V_{-}$, where $V_{+}=\mathrm{span}\{e_1,\ldots, e_p\}$ and $V_{-}=\mathrm{span}\{f_1,\ldots, f_q\}$. Define
$$(x,x)_{+}=
\begin{cases}
(x,x) & \text{if $x\in V_{+}$}\\
-(x,x) &\text{if $x\in V_{-}$}
\end{cases},$$
and let $\vphi_v^0$ be the Gaussian given by $\vphi_v^0(x)=\exp({-\pi \tr((x,x)_{+})})$ for $x\in V_v^n$. Then $S(V_v^n)$ is the space of functions of the form $P(x)\vphi_v^0(x)$, where $P$ is a polynomial on $V_v^n$ and $\vphi_v^0$ is the Gaussian. See \cite{KRS} p.489 for the case of a quadratic space. In our case where $V_v$ is split (so $p=q=r$), $\tr((x,x)_{+})=2\tr(^t\!\bar{x}x)$ for $x\in V_v^n$, so $\vphi_v^0(x)=\exp(-2\pi\tr(^t\!\bar{x}x))$ for $x\in V_v^n$.

Note that $G(\BA)$ acts on $\vphi^0$ via the Weil representation. Define
$$\text{$ S(V^n(\BA))^{\circ}$= the $G(\BA)$-span of $\vphi^0$ in $S(V^n(\BA))$}.$$

\subsection{The complementary space and Ikeda's map}\label{Section 2.6}
Assume that $n\geq r$ as before. Recall that $V$ is the split hermitian space of dimension $2r$ over $E$. The {\sl complementary space} $V_c$ of $V$ with respect to $\CW$ is defined to be the split hermitian space of dimension $2(n-r)$ over $E$, i.e. $V_c$ is the space of column vectors $M_{2n-2r,1}(E)$ with the hermitian form given by the matrix
$$\begin{pmatrix} & 1_{n-r}\\ 1_{n-r}& \end{pmatrix}.$$

For $H=U(V)$, we choose a maximal compact subgroup $K=\prod_v K_v$ of $H(\BA)$ as follows:
$$ K_v=
\begin{cases}
\text{the stabilizer of $(\fp_{E_v}^{-c_{E_v}})^r \oplus (\CO_{E_v})^r $} & \text{if $v$ is finite,}\\
H_v\cap U(2r) &\text{if $v$ is infinite.}
\end{cases}$$
Then $\omega(k)\vphi^0=\vphi^0$ for all $k\in K$.\\

Now assume that $\dim V_c>\dim V$, namely $n>2r$.

For $\vphi \in \CS(V_c^n(\BA))$ and $x\in V^n(\BA)$, put
$$\mathrm{Ik}^{(n,r)}(\vphi)(x)=\int_{M_{n-2r,n}(\BA_E)}\vphi_{K_{c}} \begin{pmatrix}y\\ x\\ 0\end{pmatrix}dy,$$
where the Haar measure $dy$ on $M_{n-2r,n}(\BA_E)$ is the product of the self-dual Haar measures on $\BA_E$ with respect to $\psi \circ \tr_{E/F}$, $K_{c}$ is a maximal compact subgroup of $U(V_c)(\BA)$ defined for the split hermitian space $V_c$ as above, and
$$ \vphi_{K_c}(v)=\int_{K_c}\vphi(kv)dk$$
for $v\in V_c(\BA)$,
where the Haar measure $dk$ on $K_c$ is such that $\vol(K_c)=1$.

The map $\mathrm{Ik}^{(n,r)}$ is defined by Ichino in \cite{Ich} pp.252-253, and we call it {\sl Ikeda's map} following Gan-Takeda \cite{GT}.

It is easy to see that Ikeda's map
$$\mathrm{Ik}=\mathrm{Ik}^{(n,r)}: \CS(V_c^n(\BA)) \ra \CS(V^n(\BA))$$
is $G(\BA)$-intertwining, i.e.,
$$\omega(g)\mathrm{Ik}(\vphi)=\mathrm{Ik}(\omega(g)\vphi), \quad \text{for $g\in G(\BA)$}.$$

If $\vphi_c^0$ is the spherical Schwartz function in $\CS(V_c^n(\BA))$, then $\mathrm{Ik}(\vphi_c^0)=\vphi^0$ is the spherical Schwartz function in $\CS(V^n(\BA))$.\\

If $\dim V> \dim V_c$, i.e. $n<2r$, we can also define Ikeda's map
$$\mathrm{Ik}=\mathrm{Ik}^{(n,n-r)}: \CS(V^n(\BA))\ra \CS(V_c^n(\BA)),$$
which is given by
$$\mathrm{Ik}(\vphi)(x)=\int_{M_{2r-n,n}(\BA_E)}\vphi_{K} \begin{pmatrix}y\\ x\\ 0\end{pmatrix}dy$$
for $\vphi \in \CS(V^n(\BA))$ and $x\in V_c^n(\BA)$, where
$$\vphi_K(v)=\int_K \vphi(kv)dk$$
for $v\in V^n(\BA)$.

\section{The regularized Siegel-Weil formula}\label{Section 3}
\subsection{Eisenstein series}
Suppose $1\leq r\leq n$. For a holomorphic section $f \in I^{n,r}(s)$, define the associated Eisenstein series on $G(\BA)$ by
$$E^{(n,r)}(g,s;f)=\sum_{\gamma \in P_r(F)\bs G(F)}f(\gamma g,s), \quad \quad \text{for $g\in G(\BA)$}. $$
It is known that this series converges absolutely for $\Re(s)\gg 0$ and admits a meromorphic continuation to the whole $s$-plane.

We call $E^{(n,r)}(g,s; f)$ a {\sl Siegel Eisenstein series} if $r=n$, and a {\sl non-Siegel Eisenstein series} otherwise.
We write $E^{(n,r)}(g,s)$ for the Eisenstein series corresponding to the spherical section $f^0$ in $I^{n,r}(s)$ with $f^0(1)=1$, and call it the {\sl spherical Eisenstein series}.

For a holomorphic section $\Phi\in I^{n,n}(s)$, the Siegel Eisenstein series $E^{(n,n)}(g,s;\Phi)$ has a functional equation
$$E^{(n,n)}(g,s;\Phi)=E^{(n,n)}(g,-s;M(s)\Phi),$$
where
$$M(s)\Phi(g,s)=\int_{N_n(\BA)}\Phi(w_n ug,s)du$$
for $\Re(s)\gg 0$ and is given by meromorphic continuation in general, and
$$w_n=\begin{pmatrix} 0 & 1_n\\ -1_n & 0 \end{pmatrix}.$$

The location of the poles of $E^{(n,n)}(g,s;\Phi)$ is determined by V.Tan \cite{Tan2}.
\begin{prop}
Let $\Phi$ be a standard section in $I^{n,n}(s)$. Then $E^{(n,n)}(g,s;\Phi)$ is holomorphic in $\Re(s)\geq 0$ except for simple poles at
$$ s_0\in \{\frac{n}{2}-j|j\in \BZ, 0\leq j < \frac{n}{2}\}.$$
\end{prop}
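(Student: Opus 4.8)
The statement is about the location of the poles of the Siegel Eisenstein series $E^{(n,n)}(g,s;\Phi)$ attached to a standard section $\Phi \in I^{n,n}(s)$, and it is attributed to V.\ Tan \cite{Tan2}, so the proof I would give follows the standard Kudla--Rallis-type analysis adapted to the unitary Siegel case. The plan is to reduce the analytic behaviour of the full Eisenstein series to that of its constant term along the Siegel parabolic $P=P_n$, and then to analyze the intertwining operator $M(s)$. First I would recall that since $\Phi$ is a standard section, the constant term $E_{P}(g,s;\Phi) = \int_{N_n(F)\bs N_n(\BA)} E^{(n,n)}(ug,s;\Phi)\,du$ equals $\Phi(g,s) + M(s)\Phi(g,s)$, so away from $\Re(s)\geq 0$ the poles of $E$ in this half-plane coincide with those of $M(s)\Phi$ (the first term being entire). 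The general principle (square-integrability of residues, or Langlands' theory of Eisenstein series) lets one bootstrap from control of the constant term to control of the whole series, so the crux is to locate the poles of $M(s)$ acting on standard sections in the region $\Re(s)\geq 0$.

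The second step is the factorization of the global intertwining operator. Writing $M(s) = \otimes_v M_v(s)$ over all places and normalizing by the relevant product of (completed) Hecke $L$-functions — here, for the dual pair $(U(n,n),U(V))$ the normalizing factor is a product $\prod_{i} \xi_E(\,\cdot\,)$ or $\prod_i \xi_F(\,\cdot\,,\eta^{?})$ of the type already appearing in the statement of the weak second term identity — one obtains $M(s) = \big(\prod d_n(s)\big)\, M^\natural(s)$ with $M^\natural(s)$ the normalized operator. One then needs two inputs: (i) the normalized local operators $M_v^\natural(s)$ are holomorphic and nonzero on $\Re(s)\geq 0$ (for good sections; for an arbitrary standard section one uses that $M_v^\natural(s)\Phi_v$ extends holomorphically, a local statement proved place by place, the archimedean places using the explicit $\Gamma$-factor computation in the Fock model and the finite places using the theory of the degenerate principal series $I^{n,n}_v(s)$), and (ii) the explicit normalizing factor $d_n(s)$ — a ratio of completed Dedekind zeta functions of $E$ and Hecke $L$-functions of $\eta$ — has its poles in $\Re(s)\geq 0$ exactly at the points $s_0 = n/2 - j$ for $0\leq j < n/2$, and these are simple. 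Input (ii) is a direct computation with the functional equations $\xi_k(s)=\xi_k(1-s)$ and the known pole structure of $\xi_E$, $\xi_F(\cdot,\eta)$ recalled in the excerpt.

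The third step is to rule out cancellation: one must check that at each candidate point $s_0 = n/2-j$ the pole of $d_n(s)$ is not killed for \emph{some} standard section, i.e.\ that $M^\natural(s_0)\Phi \neq 0$ for suitable $\Phi$, so that the pole of $E$ is genuinely there and is at most simple. This is handled by choosing $\Phi$ to be (or to be built from) the spherical section $f^0$, for which $M_v^\natural(s)f_v^0 = f_v^0$ at every place, so $M^\natural(s)f^0 = f^0 \neq 0$; then the order of the pole of $E^{(n,n)}(g,s;f^0)$ at $s_0$ is exactly the order of the pole of $d_n(s)$, which is one. For a general standard section simplicity of the pole then follows because the polar part is an intertwining map out of $I^{n,n}(s_0)$ whose image is determined by the residue of $d_n(s)$ times $M^\natural(s_0)$, and $M^\natural(s_0)$ is holomorphic, so the order cannot exceed one.

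\textbf{Main obstacle.} The step I expect to be the real work is input (i) above: proving that the normalized local intertwining operators $M_v^\natural(s)$, or at least the composite $M^\natural(s)$, is holomorphic (and that no extra poles are introduced) on all of $\Re(s)\geq 0$ for an \emph{arbitrary} standard section rather than just the spherical one. At the non-archimedean inert and split places this amounts to understanding the reducibility points and the submodule structure of the degenerate principal series $I^{n,n}_v(s)$ for the quasi-split unitary group — precisely the content of \cite{Tan2} — and at the archimedean places it requires the explicit $\Gamma$-factor identities for the intertwining operator on $K$-types in the Fock model. Everything else — the reduction to the constant term, the factorization, the zeta-function bookkeeping, and the spherical-section sharpness argument — is routine given the normalization conventions fixed in Section \ref{Section 2}.
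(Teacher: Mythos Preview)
The paper does not give a proof of this proposition at all: it is simply stated as a result of V.~Tan \cite{Tan2}, with the sentence ``The location of the poles of $E^{(n,n)}(g,s;\Phi)$ is determined by V.Tan \cite{Tan2}'' immediately preceding the statement, and nothing further. So there is no ``paper's own proof'' to compare against.

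Your proposal is a reasonable outline of the standard Kudla--Rallis/Tan strategy that underlies the cited result: reduce to the constant term $\Phi + M(s)\Phi$ along the Siegel parabolic, factor the global intertwining operator, normalize it by the appropriate product of $L$-factors, show the normalized local operators are holomorphic in $\Re(s)\geq 0$, and read off the simple poles from the normalizing factor. You have also correctly identified the genuine technical content (your ``main obstacle''), namely the local holomorphy of the normalized intertwining operators for arbitrary standard sections, which is indeed what Tan's paper \cite{Tan2} establishes place by place via the structure of the degenerate principal series. One small remark: your step ruling out cancellation (showing the poles are actually attained) is more than the proposition asserts --- the statement only claims holomorphy \emph{except possibly} at the listed points with at most simple poles, not that every listed point is an actual pole for every $\Phi$ --- so that part of your plan, while correct, goes slightly beyond what is needed.
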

Note that the same is true for a holomorphic section since it is a finite linear combination of standard sections with holomorphic coefficients.\\

The behavior of $E^{(n,n)}(g,s;\Phi)$ at $s=\rho_{n,r}=(2r-n)/2$ is of great interest to us. The Siegel Eisenstein series $E^{(n,n)}(g,s;\Phi)$ has at most a simple pole at $s=\rho_{n,r}$, and we write its Laurent expansion at $\rho_{n,r}$ as
$$E^{(n,n)}(g,s;\Phi)=\frac{A_{-1}^{(n,r)}(\Phi)(g)}{s-\rho_{n,r}}+A_{0}^{(n,r)}(\Phi)(g)+O(s-\rho_{n,r}).$$

If $\vphi \in S(V^n(\BA))$, we can define a standard section $\Phi_{\vphi}\in I^{n,n}(s)$ by
$$\Phi_{\vphi}(g,s)=|a(g)|^{s-\rho_{n,r}}\omega(g)\vphi(0), \quad \quad \text{for $g\in G(\BA)$}. $$

The Eisenstein series $E(g,s;\Phi_{\vphi})$ has at most a simple pole at $s=\rho_{n,r}$, and we write its Laurent expansion there as
$$ E^{(n,n)}(g,s;\Phi_{\vphi})=\frac{A_{-1}^{(n,r)}(\vphi)(g)}{s-\rho_{n,r}}+A_{0}^{(n,r)}(\vphi)(g)+O(s-\rho_{n,r}).$$
Then we have a map
$$ A^{(n,r)}_d: S(V^n(\BA)) \ra \CA(G).$$
Note that $A^{(n,r)}_d$ is $G(\BA)$-intertwining if $A^{(n,r)}_d$ is the leading term of the Laurent expansion.\\

For the spherical Schwartz function $\vphi^0$, we see that
$$\Phi_{\vphi^0}|_{K_G}=\Phi_{\vphi^0}(1)=\vphi^0(0).$$
Thus $\Phi_{\vphi^0}$ is a spherical section and
$$E^{(n,n)}(g,s;\Phi_{\vphi^0})=\vphi^0(0)E^{(n,n)}(g,s).$$

\subsection{Regularized theta integral}\label{Section 3.2}
Now we begin to define the regularized theta integral. We assume that $n\geq r$ from now on.

First we define an auxiliary Eisenstein series on $H(\BA)$.
Let $P_H$ be the Siegel parabolic subgroup of $H=U(V)$ which stabilizes a totally isotropic subspace of $V$ of rank $r$. Then we have Levi decomposition $P_H=M_H N_H $, where $M_H\cong R_{E/F}GL_r$ is given by
$$M_H(F)=\left\{m(a)=\begin{pmatrix} a & \\ & {}^t\!\bar{a}^{-1}\end{pmatrix} | a\in GL_r(E) \right\},$$
and $N_H$ is given by
$$N_H(F)=\left\{u(b)=\begin{pmatrix} 1 & b\\ & 1 \end{pmatrix}| b\in \SHer_r(F) \right\},$$
where $\SHer_r(F):=\{b\in M_r(E)| {}^t\!\bar{b}=-b\}$. Also we have Iwasawa decomposition $H(\BA)=P_H(\BA) K$, where $K$ is the maximal compact subgroup of $H(\BA)$ defined in Section \ref{Section 2.6}.

For $h=u(b) m(a)k\in H(\BA)$, let
$$|a(h)|=\|\det a\|,$$
and let
$$\Psi(h,s)=|a(h)|^{s+\frac{r}{2}}.$$
Define an Eisenstein series on $H(\BA)$ by
$$E(h,s)=\sum_{\gamma \in P_H(F)\bs H(F)}\Psi(\gamma h,s), \quad \quad \text{for $h\in H(\BA)$}. $$

It is known that $E(h,s)$ has a simple pole at $s=r/2$ with constant residue $\kappa$.

We fix Haar measures on various groups as follows. For the group $H=U(V)$, we take the Haar measure on $H(\BA)$ such that $\vol(H(F)\bs H(\BA))=1$. For the parabolic subgroup $P_H=M_H N_H$ where $M_H\cong R_{E/F}GL_r$, we take the left Haar measure $d_l p=d^{\times}a ~du$ on $P_H(\BA)$, where $d^{\times}a$ is the Tamagawa measure on $GL_r(\BA_E)$, and $du$ is the Haar measure on $N_H(\BA)$ such that
$\vol(N_H(F)\bs N_H(\BA))=1$. We define a constant $c_K$ such that
$$ dh=c_K ~d_l p ~dk.$$
In fact, we will show later that $c_K=\kappa$, where $\kappa$ is the constant residue of $E(h,s)$ at $s=r/2$. See the remark after Proposition \ref{Prop 5.2}.\\

For $\vphi \in \CS(V^n(\BA))$ and $g\in G(\BA)$, define the integral
$$I^{(n,r)}(g,s;\vphi)=\frac{1}{\kappa}\int_{H(F)\bs H(\BA)}\theta(g,h;\vphi)E(h,s)dh,$$
where the measure $dh$ is such that $\vol(H(F)\bs H(\BA))=1$.

Fix $\vphi \in \CS(V^n(\BA))$. Choose a good place $v$ for $\vphi$ (see \cite{Ich} p.249), and let $z=\alpha_{n,r-1,1}\in \CH(G_v// K_{G_v})$ be the element of the spherical Hecke algebra of $G_v$ as defined in \cite{Ich} pp.248-249. Then $\theta(g,h;\omega(z)\vphi)$ is rapidly decreasing on $H(F)\bs H(\BA)$ for all $g\in G(\BA)$ (see \cite{Ich} Proposition 2.4).

We have
$$\begin{aligned}
&\quad I^{(n,r)}(g,s;\omega(z)\vphi)\\
&=\frac{1}{\kappa}\int_{P_H(F)\bs H(\BA)}\theta(g,h;\omega(z)\vphi)|a(h)|^{s+\frac{r}{2}}dh\\
&=\frac{c_K}{\kappa}\int_{P_H(F)\bs P_H(\BA)}\int_K \theta(g,pk;\omega(z)\vphi)dk |a(p)|^{s+\frac{r}{2}}d_l p\\
&=\int_{P_H(F)\bs P_H(\BA)}\theta(g,p;(\omega(z)\vphi)_K)|a(p)|^{s+\frac{r}{2}}d_l p \quad \quad (\text{since $c_K=\kappa$})\\
&=\int_{GL_r(E)\bs GL_r(\BA_E)}\int_{N_H(F)\bs N_H(\BA)}\theta(g,um(a);\omega(z)\vphi_K)du \|\det a\|^{s-\frac{r}{2}}d^{\times}a\\
&=\int_{GL_r(E)\bs GL_r(\BA_E)}\theta_{N_H}(g,m(a);\omega(z)\vphi_K)\|\det a\|^{s-\frac{r}{2}}d^{\times}a,
\end{aligned}$$
where $$\vphi_K =\int_K \omega(k)\vphi dk$$ is the $K$-invariant projection of $\vphi$, with the measure $dk$ on $K$ taken to be such that $\vol(K)=1$; and
$$\theta_{N_H}(g,h;\vphi)=\int_{N_H(F)\bs N_H(\BA)}\theta(g,uh;\vphi) du$$ is the $N_H$-constant term of $\theta(g,h;\vphi)$, with the measure $du$ taken to be such that $\vol(N_H(F)\bs N_H(\BA))=1$.

Similar to \cite{KR2} Proposition 5.5.1, we have the following
\begin{prop}
$$\theta_{N_H}(g,h;\omega(z)\hat{\vphi})=\sum_{\gamma \in P_r(F)\bs G(F)}\sum_{\alpha \in GL_r(E)}\omega(\gamma g, m(\alpha)h)\omega(z)\hat{\vphi}(w_0),$$
where $w_0=(0_{r\times n}, 1_r, 0_{r\times (n-r)})$.
\end{prop}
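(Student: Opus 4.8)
The plan is to unfold the constant-term integral directly from the definition of the theta kernel, following the classical computation of Kudla--Rallis for the symplectic-orthogonal case. Recall that in the mixed model $\CS(\CW^r(\BA))$ the group $G(\BA)$ acts on the first variable and $H(\BA)$ on the second, and that $\theta(g,h;\hat\vphi) = \sum_{x \in \CW^r(F)} \omega(g,h)\hat\vphi(x)$ where we identify $\CW^r \cong M_{r,2n}(E)$. The $N_H$-constant term is $\theta_{N_H}(g,h;\hat\vphi) = \int_{N_H(F)\bs N_H(\BA)} \sum_{x \in \CW^r(F)} \omega(g,uh)\hat\vphi(x)\, du$, and the first move is to interchange sum and integral (legitimate after replacing $\vphi$ by $\omega(z)\vphi$ since then the theta function is rapidly decreasing, so everything converges absolutely) and to compute the action of $u = u(b) \in N_H$ on $\omega(\cdot,h)\hat\vphi(x)$ in the mixed model. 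The action of $N_H$ is by a unipotent that, in the Schr\"odinger-type realization, multiplies $\hat\vphi(x)$ by a character $\psi(\tfrac12\langle x, \rho(b) x\rangle)$ depending on the skew-hermitian matrix $b$; integrating this character over $N_H(F)\bs N_H(\BA)$ picks out exactly those $x \in M_{r,2n}(F_E)$ for which the relevant hermitian pairing vanishes, i.e. those $x$ whose "inner product matrix" $(x,x)$ is zero — equivalently, those $x$ spanning a totally isotropic subspace, which (since $\CW$ is split of dimension $2n$ and we are looking at $r$-tuples with $r\le n$) means precisely that the row space of $x$ lies in an $n$-dimensional isotropic subspace.

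Second, I would use Witt's theorem for the split skew-hermitian space $\CW$: the stabilizer $P_r(F)$ of the standard isotropic flag acts on the set of such rank-$r$ isotropic configurations, and together with the $GL_r(E)$-action (changing basis of the $r$-tuple) one gets a bijection identifying the surviving set of $x$'s with $\bigsqcup_{\gamma \in P_r(F)\bs G(F)} \{\, \gamma^{-1}\cdot m(\alpha) w_0 : \alpha \in GL_r(E)\,\}$, where $w_0 = (0_{r\times n}, 1_r, 0_{r\times(n-r)})$ is the base point. (Here one must be a little careful: the isotropic $r$-tuples that contribute after the character integration are those of maximal-isotropic type in the sense that their span sits inside an $n$-dimensional isotropic subspace; over a field all such are in a single $G(F)$-orbit, and the stabilizer of the base point is $P_r(F)$, giving the double-coset parametrization. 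The degenerate tuples — those with smaller span — contribute lower-rank orbits, but in the split case and with the precise form of $w_0$ these are absorbed into the same sum, exactly as in \cite{KR2} Prop.\ 5.5.1.) Re-indexing the sum over $x$ via this bijection and using the $G(\BA)$- and $H(\BA)$-equivariance $\omega(\gamma g, m(\alpha)h)\omega(z)\hat\vphi(w_0)$ of the Weil representation yields the claimed double sum.

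Third, I would insert $\omega(z)$ throughout and note that since $z \in \CH(G_v/\!/K_{G_v})$ acts only in the $G$-variable it commutes with all the manipulations above; the rapid decay it provides is used only to justify the interchange of summation and integration and has no effect on the shape of the final identity. The key bookkeeping is that the self-dual Haar measure conventions on $\BA_E$ (fixed in Section \ref{Section 2}) are exactly those making $\vol(N_H(F)\bs N_H(\BA)) = 1$ with the parametrization $N_H \cong \SHer_r$, so no extra volume constant appears in front.

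The main obstacle I expect is the orbit analysis in the second step: precisely matching the set of $x \in M_{r,2n}(F_E)$ that survives the $N_H(F)\bs N_H(\BA)$-integration with the double-coset-indexed family $\{\gamma g, m(\alpha) w_0\}$, including correctly handling the degenerate (lower-rank) orbits so that they fit into the single sum over $\alpha \in GL_r(E)$ rather than producing extra terms, and checking that the hermitian-versus-skew-hermitian bookkeeping (the form on $V$ is hermitian, on $\CW$ skew-hermitian, and the partial Fourier transform $\vphi \mapsto \hat\vphi$ mixes them) produces exactly the character $\psi(\tr_{E/F}\tr(b\,{}^t\!\bar x))$-type integrand whose vanishing locus is the isotropic set. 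This is where one has to mirror carefully the computation of \cite{KR2} Proposition 5.5.1, transposing it from the symplectic/metaplectic setting to the unitary dual pair $(U(n,n), U(V))$.
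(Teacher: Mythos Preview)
Your overall strategy matches the paper's proof: work in the mixed model, integrate the $N_H$-action to pick out the isotropic $x \in \CW^r(F)$, and then parametrize the surviving orbit by $P_r(F)\backslash G(F) \times GL_r(E)$ via Witt's theorem. That part is fine.

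The genuine gap is in your treatment of the degenerate (lower-rank) orbits. You write that they are ``absorbed into the same sum, exactly as in \cite{KR2} Prop.\ 5.5.1,'' and later that the role of $z$ is ``only to justify the interchange of summation and integration and has no effect on the shape of the final identity.'' Both statements are wrong. After the character integration you are left with \emph{all} $x \in \CW^r(F)$ with $\langle x,x\rangle = 0$, and the bijection with $\{\alpha w_0 \gamma : \alpha \in GL_r(E),\ \gamma \in P_r(F)\backslash G(F)\}$ covers only those $x$ of full rank $r$. The lower-rank isotropic $x$ are \emph{not} absorbed; they would give genuine extra terms. What actually eliminates them is the specific property of the regularizing Hecke operator $z = \alpha_{n,r-1,1}$ from \cite{Ich}: one has $\omega(z)\hat\vphi(x) = 0$ whenever $\rank(x) < r$. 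This vanishing (and not merely rapid decay) is the second essential role of $z$ in the argument, and it is precisely how \cite{KR2} Proposition 5.5.1 handles the analogous step in the symplectic case. Without invoking it, your orbit decomposition does not yield the stated identity.
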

\begin{proof}
In the mixed model,
$$\omega(u(b))\hat{\vphi}(x)=\psi(\frac{1}{2}\tr(b\pair{x,x}))\hat{\vphi}(x).$$
We have
$$\begin{aligned}
&\quad \theta_{N_H}(g,h;\omega(z)\hat{\vphi})\\
&=\int_{N_H(F)\bs N_H(\BA)}\theta(g,uh;\omega(z)\hat{\vphi})du\\
&=\int_{N_H(F)\bs N_H(\BA)} \sum_{x\in \CW^r(F)}\omega(g,uh)\omega(z)\hat{\vphi}(x)du\\
&=\int_{\SHer_r(F)\bs \SHer_r(\BA)}\sum_{x\in \CW^r(F)}\psi(\frac{1}{2}\tr(b\pair{x,x}))\omega(g,h)\omega(z)\hat{\vphi}(x)db\\
&=\sum_{x\in \CW^r(F)} \omega(g,h)\omega(z)\hat{\vphi}(x) \int_{\SHer_r(F)\bs \SHer_r(\BA)}\psi(\frac{1}{2}\tr(b\pair{x,x})) db\\
&=\sum_{x\in \CW^r(F), \pair{x,x}=0} \omega(g,h)\omega(z)\hat{\vphi}(x)\\
&=\sum_{x\in \CW^r(F), \pair{x,x}=0, \rank(x)=r}\omega(g,h)\omega(z)\hat{\vphi}(x)
\end{aligned}$$
where we have used the fact that $\omega(z)\hat{\vphi}(x)$ vanishes on $x$ with $\rank(x)<r$.

Note that the set $\{x\in \CW^r(F)|\pair{x,x}=0, \rank(x)=r\}$ is equal to $\{\alpha w_0 \gamma|\alpha \in GL_r(E), \gamma \in P_r(F)\bs G(F)\}$, where $w_0=(0_{r\times n}, 1_r, 0_{r\times (n-r)})$.

So we have
$$\begin{aligned}
&\quad \theta_{N_r}(g,h;\omega(z)\hat{\vphi})\\
&=\sum_{x\in \CW^r(F), \pair{x,x}=0, \rank(x)=r}\omega(g,h)\omega(z)\hat{\vphi}(x)\\
&=\sum_{\gamma \in P_r(F)\bs G(F)}\sum_{\alpha \in GL_r(E)}\omega(g,h)\omega(z)\hat{\vphi}({}^t\!\bar{\alpha} w_0 \gamma)\\
&=\sum_{\gamma \in P_r(F)\bs G(F)}\sum_{\alpha \in GL_r(E)}\omega(\gamma g, m(\alpha)h)\omega(z)\hat{\vphi}(w_0).
\end{aligned}$$

\end{proof}

From the above proposition, we see that
$$\begin{aligned}
&\quad I^{(n,r)}(g,s;\omega(z)\vphi)\\
&=\int_{GL_r(E)\bs GL_r(\BA_E)}\sum_{\gamma \in P_r(F)\bs G(F)}\sum_{\alpha \in GL_r(E)}\omega(\gamma g,m(\alpha)m(a))\omega(z)\vphi_{K}(w_0)\|\det a\|^{s-\frac{r}{2}} d^{\times}a\\
&=\int_{GL_r(E)\bs GL_r(\BA_E)}\sum_{\gamma \in P_r(F)\bs G(F)}\sum_{\alpha \in GL_r(E)}\omega(\gamma g,m(\alpha)m(a))\omega(z)\hat{\vphi}_{K}(w_0)\|\det a\|^{s-\frac{r}{2}} d^{\times}a\\
&=\sum_{\gamma \in P_r(F)\bs G(F)}\int_{ GL_r(\BA_E)}\omega(\gamma g, m(a))\omega(z)\hat{\vphi}_{K}(w_0)\|\det a\|^{s-\frac{r}{2}} d^{\times}a\\
&=\sum_{\gamma \in P_r(F)\bs G(F)}\int_{ GL_r(\BA_E)}\omega(\gamma g)\omega(z)\hat{\vphi}_{K}({}^t\!\bar{a} w_0)\|\det a\|^{s+n-\frac{r}{2}} d^{\times}a.
\end{aligned}$$\\

Now for $\Re(s)\gg 0$, let
$$f^{(n,r)}(g,s;\vphi)=\int_{ GL_r(\BA_E)}\omega(g)\hat{\vphi}({}^t\!\bar{a} w_0)\|\det a\|^{s+n-\frac{r}{2}} d^{\times}a,$$
where $d^{\times}a$ is the Tamagawa measure on $GL_r(\BA_E)$.

Then $f^{(n,r)}(g,s;\vphi)$ is a non-holomorphic section in $I^{n,r}(s)$, and
$$I^{(n,r)}(g,s;\omega(z)\vphi)=\sum_{\gamma \in P_r(F)\bs G(F)}f^{(n,r)}(\gamma g,s;\omega(z)\vphi_K).$$

\begin{lem}
For $\Re(s)\gg 0$,
$$f^{(n,r)}(g,s;\omega(z)\vphi)=P_z(s)f^{(n,r)}(g,s;\vphi),$$
where $$P_z(s)=\prod_{j=1}^r(C_{s-\frac{r+1}{2}+j}-C_{n-r+\frac{1}{2}}),$$
and $C_s=q_v^s+q_v^{-s}$, where $v$ is the chosen good place for $\vphi$.
\end{lem}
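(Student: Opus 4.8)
The identity to be proved is that the Hecke operator $z = \alpha_{n,r-1,1}$ acts on the section $f^{(n,r)}(\cdot,s;\varphi) \in I^{n,r}(s)$ by the scalar $P_z(s)$. The plan is to reduce the computation to a purely local statement at the chosen good place $v$ and then to exploit the known Satake/eigenvalue description of $\alpha_{n,r-1,1}$ on spherical vectors. First I would observe that since $v$ is a good place for $\varphi$, the component $\hat\varphi_v$ is (up to scalar) the characteristic function of a lattice that is fixed by $K_{G_v}$, so $\omega(z)\hat\varphi = \omega_v(z)\hat\varphi_v \otimes \bigotimes_{w\neq v}\hat\varphi_w$, and the operator $z$ only touches the $v$-component. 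Hence it suffices to prove that the local integral
$$
f_v(g,s;\omega_v(z)\hat\varphi_v) = \int_{GL_r(E_v)} \omega_v(g)\omega_v(z)\hat\varphi_v({}^t\!\bar a\, w_0)\,\|\det a\|^{s+n-\frac r2}\,d^\times a
$$
equals $P_z(s)$ times the same integral without $z$.

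The key step is to interchange the Hecke action with the integral over $GL_r(E_v)$. Using that $\omega_v(z)$ acts by right convolution on $G_v$ and that the map $a \mapsto {}^t\!\bar a\, w_0$ identifies $GL_r(E_v)$ with (an open dense subset of) the relevant orbit inside the $w_0$-stratum, I would rewrite $\omega_v(z)$ as an operator acting by a right translation average on $G_v$ and then push it through to become an operator on the Eisenstein/degenerate-principal-series side. Concretely, because $f_v(\cdot,s;\hat\varphi_v)$ lies in $I^{n,r}(s)$, the section $f_v(\cdot,s;\omega_v(z)\hat\varphi_v)$ is $\omega_v(z)$ applied to $f_v(\cdot,s;\hat\varphi_v)$ as an element of $I^{n,r}(s)$; and since $f_v(\cdot,s;\hat\varphi_v)$ is (a constant multiple of) the normalized spherical section $f^0_v$ in $I^{n,r}(s)$ at the good place, $z$ acts on it by its spherical Hecke eigenvalue. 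So the computation collapses to evaluating the eigenvalue of $\alpha_{n,r-1,1}$ on the unramified degenerate principal series $I^{n,r}(s)$; this eigenvalue, via the Satake isomorphism, is a symmetric Laurent polynomial in $q_v^{\pm s}$ and $q_v^{\pm(n-r+\frac12)}$ of the product form $\prod_{j=1}^r(C_{s-\frac{r+1}{2}+j} - C_{n-r+\frac12})$, matching Ichino's conventions in \cite{Ich} pp.248--249. I would cite the analogous computation in \cite{KR2} (the symplectic-orthogonal case) and carry out the unitary variant, tracking the half-integral shifts coming from $\rho$ of $P_r$ and from the normalization $s + n - \frac r2$.

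The main obstacle I expect is bookkeeping: matching the normalization of the section $f^{(n,r)}(g,s;\varphi)$ — in particular the exponent $s+n-\frac r2$ and the induction data on $M_r \cong R_{E/F}GL_r \times U(n-r,n-r)$ — with the precise definition of $\alpha_{n,r-1,1}$ as an element of $\CH(G_v/\!/K_{G_v})$ so that the Satake transform produces exactly the factors $C_{s-\frac{r+1}{2}+j}$ for $j=1,\dots,r$ and the ``constant'' shift $C_{n-r+\frac12}$ coming from the $U(n-r,n-r)$ part acting trivially on the support of $\hat\varphi_v(w_0)$. A secondary technical point is justifying the interchange of the Hecke convolution and the absolutely convergent integral over $GL_r(E_v)$ for $\Re(s)\gg 0$, which is routine by Fubini once one notes the integrand has compact support in $a$ modulo the action that the Gaussian/characteristic-function support imposes. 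Everything else is a direct, if lengthy, unramified computation parallel to \cite{KR2} Section 5.
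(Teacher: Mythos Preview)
Your approach is correct and is essentially the same as the paper's: the paper simply defers to \cite{KR2} Lemma~5.5.3 and \cite{Ich} p.~249, and those references carry out exactly the argument you outline---reduce to the good place $v$, use that $\varphi\mapsto f^{(n,r)}(\cdot,s;\varphi)$ is $G(\BA)$-intertwining so that $z$ acts on the section side by right convolution, observe that at $v$ the local section is spherical, and read off $P_z(s)$ as the Satake eigenvalue of $\alpha_{n,r-1,1}$ on the spherical vector in $I^{n,r}_v(s)$.
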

\begin{proof}
Similar to \cite{KR2} Lemma 5.5.3. See also \cite{Ich} p.249.
\end{proof}

For our fixed $\vphi \in \CS(V^n(\BA))$, define the {\sl regularized theta integral}
$$\CE^{(n,r)}(g,s;\vphi)=\frac{1}{P_z(s)}I^{(n,r)}(g,s;\omega(z)\vphi).$$

Then
$$\begin{aligned}
&\CE^{(n,r)}(g,s;\vphi)=\frac{1}{\kappa \cdot P_z(s)}\int_{H(F)\bs H(\BA)}\theta(g,h;\omega(z)\vphi)E(h,s)dh\\
&\quad \quad \quad \quad \quad \quad =\sum_{\gamma \in P_r(F)\bs G(F)}f^{(n,r)}(\gamma g,s;\vphi_K)
\end{aligned}$$
is an Eisenstein series associated to a non-holomorphic section in $I^{n,r}(s)$.\\

Let $z'\in \CH(H_v//K_v)$ be the element in the spherical Hecke algebra of $H_v$ corresponding to $z$ under Satake isomorphism. Then $z'$ acts on the space $\CA(H)$ of automorphic forms on $H$, and it is self-adjoint for the Petersson inner product on $\CA(H)$, i.e.
$$\pair{z' * f_1,f_2}_{Pet}=\pair{f_1, z' * f_2}_{Pet}$$
for $f_1, f_2\in \CA(H)$, where
$$\pair{f_1,f_2}_{Pet}=\int_{H(F)\bs H(\BA)}f_1(h) f_2(h)dh.$$
Moreover, we have
$$ z' * E(h,s)=P_z(s)E(h,s).$$

The Eisenstein series $E(h,s)$ has  a simple pole at $s=\rho_r=r/2$. Taking note of the definition of $P_z(s)$, we see that the regularized theta integral $\CE^{(n,r)}(g,s;\vphi)$ has a double pole at $s=\rho_r$ if $r\leq n \leq 2r-1$, and it has a simple pole at $s=\rho_r$ if $n\geq 2r$.\\

We write the Laurent expansion of $\CE^{(n,r)}(g,s;\vphi)$ at $s=\rho_r=r/2$ as
$$\CE^{(n,r)}(g,s;\vphi)=\sum_{d\geq -2}^{\infty}B^{(n,r)}_d(\vphi)(g)(s-\rho_r)^d.$$
Note that each $B^{(n,r)}_d$ defines a map from $\CS(V^n(\BA))$ to $\CA(G)$, which is $G(\BA)$-intertwining. This assertion can be checked as follows. We have
$$\begin{aligned}
\CE^{(n,r)}(g,s;\vphi)
&=\frac{1}{P_z(s)}I^{(n,r)}(g,s;\omega(z)\vphi)\\
&=\frac{1}{\kappa \cdot P_z(s)}\int_{[H]}\theta(g,h;\omega(z)\vphi)E(h,s)dh\\
&=\int_{[H]}\theta(g,h;\omega(z)\vphi)\cdot \frac{E(h,s)}{\kappa \cdot P_z(s)}dh.
\end{aligned}$$
Write the Laurent expansion of $\frac{E(h,s)}{\kappa \cdot P_z(s)}$ at $s=\rho_r$ as
$$\frac{E(h,s)}{\kappa \cdot P_z(s)}=\sum_{d=-2}^{\infty}C_d(h)(s-\rho_r)^d.$$
Then
$$ B^{(n,r)}_d(\vphi)(g)=\int_{[H]}\theta(g,h;\omega(z)\vphi)C_d(h)dh.$$
The assertion then follows from
$$\theta(gg',h;\omega(z)\vphi)=\theta(g,h;\omega(z)\omega(g')\vphi)$$
for all $g,g'\in G(\BA)$.\\

\begin{lem}\label{Lem 3.4}
For the spherical Schwartz function $\vphi^0$, we have
$$\CE^{(n,r)}(g,s;\vphi^0)=|D_E|^{-r(s+n-\frac{r}{2})/2} \prod_{i=1}^{r} \frac{\xi_E(s+n-\frac{r}{2}-i+1)}{\xi_E(i)} E^{(n,r)}(g,s),$$
where $E^{(n,r)}(g,s)$ is the spherical Eisenstein series.
\end{lem}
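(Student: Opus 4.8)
The plan is to compute the section $f^{(n,r)}(g,s;\vphi^0)$ explicitly and recognize it, up to the stated normalizing factor, as the spherical section $f^0$ in $I^{n,r}(s)$ with value $1$ at the identity; since both $\CE^{(n,r)}(g,s;\vphi^0)$ and $E^{(n,r)}(g,s)$ are the Eisenstein series attached to their respective sections, the identity of Eisenstein series will follow from the identity of sections. Recall from Section \ref{Section 3.2} that
$$f^{(n,r)}(g,s;\vphi^0)=\int_{GL_r(\BA_E)}\omega(g)\hat{\vphi}^0({}^t\!\bar{a}\,w_0)\|\det a\|^{s+n-\frac{r}{2}}\,d^{\times}a,$$
and that $f^{(n,r)}(\,\cdot\,,s;\vphi^0)$ already lies in $I^{n,r}(s)$. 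Because $\vphi^0$ is spherical, $\hat{\vphi}^0$ is $K$-invariant (it is the characteristic function of $M_{r,2n}(\CO_{E_v})$ at finite $v$ and the Gaussian at archimedean $v$), so $\omega(k)\hat{\vphi}^0=\hat{\vphi}^0$ for $k\in K_G$; hence $f^{(n,r)}(\,\cdot\,,s;\vphi^0)$ is a spherical section, and it suffices to evaluate it at $g=1$.

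First I would evaluate the integral at $g=1$ place by place, factoring $d^{\times}a$ as the Tamagawa measure. At a finite place the integrand is the characteristic function of $\{a\in GL_r(E_v): {}^t\!\bar{a}\,w_0\in M_{r,2n}(\CO_{E_v})\}$, i.e. of $M_r(\CO_{E_v})\cap GL_r(E_v)$; integrating $\|\det a\|_{E_v}^{s+n-\frac{r}{2}}$ over this set against the local Tamagawa measure on $GL_r(E_v)$ gives, by the standard Tate/Godement--Jacquet computation, a product of local zeta factors $\prod_{i=1}^{r}\zeta_{E_v}(s+n-\frac{r}{2}-i+1)$, up to the local volume/discriminant constant coming from the normalization of the Tamagawa measure. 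At the archimedean places the analogous integral of the Gaussian against $\|\det a\|^{s+n-\frac{r}{2}}$ produces the corresponding $\Gamma_{\BC}$-factors. Taking the product over all $v$, the numerator assembles into $\prod_{i=1}^{r}\zeta_E(s+n-\frac{r}{2}-i+1)$, and the bookkeeping of discriminants and of the Tamagawa normalization of $GL_r(\BA_E)$ (which carries the factor $\prod_{i=1}^{r}\zeta_E(i)^{-1}$, the residue/volume of the Tamagawa measure, together with $|D_E|^{-r(s+n-r/2)/2}$ from the completed-versus-incomplete normalization) yields exactly the constant
$$|D_E|^{-r(s+n-\frac{r}{2})/2}\prod_{i=1}^{r}\frac{\xi_E(s+n-\frac{r}{2}-i+1)}{\xi_E(i)}.$$
So $f^{(n,r)}(1,s;\vphi^0)$ equals this constant, i.e. $f^{(n,r)}(\,\cdot\,,s;\vphi^0)$ is that constant times $f^0$.

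Finally I would conclude: summing over $\gamma\in P_r(F)\bs G(F)$,
$$\CE^{(n,r)}(g,s;\vphi^0)=\sum_{\gamma}f^{(n,r)}(\gamma g,s;\vphi^0)=|D_E|^{-r(s+n-\frac{r}{2})/2}\prod_{i=1}^{r}\frac{\xi_E(s+n-\frac{r}{2}-i+1)}{\xi_E(i)}\,E^{(n,r)}(g,s),$$
using that $(\vphi^0)_K=\vphi^0$ (established in Section \ref{Section 2.6}, where $\omega(k)\vphi^0=\vphi^0$ for all $k\in K$) so that the section appearing in the formula for $\CE^{(n,r)}$ is indeed $f^{(n,r)}(\,\cdot\,,s;\vphi^0)$. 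The identity a priori holds for $\Re(s)\gg 0$ where both sides converge, and then extends to all $s$ by meromorphic continuation. The main obstacle is the careful local computation of the $GL_r$-integral against the \emph{Tamagawa} measure and the precise tracking of the discriminant powers and of the constant $\prod_i\xi_E(i)^{-1}$ coming from the global Tamagawa normalization; getting these normalization constants exactly right (rather than up to an unspecified scalar) is where the real work lies, and one should cross-check against the analogous computation in Kudla--Rallis \cite{KR2} and Ichino \cite{Ich}.
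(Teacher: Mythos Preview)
Your proposal is correct and follows essentially the same approach as the paper: reduce to showing that $f^{(n,r)}(\,\cdot\,,s;\vphi^0)$ is the spherical section in $I^{n,r}(s)$ with the stated value at $1$, using $(\vphi^0)_K=\vphi^0$ to identify the section appearing in the formula for $\CE^{(n,r)}$. The only difference is that the paper does not carry out the place-by-place $GL_r$ zeta computation you sketch, but instead invokes \cite{GI} Lemma~7.3 directly for the value $f^{(n,r)}(1,s;\vphi^0)$; your outline is exactly the content of that cited lemma.
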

\begin{proof}
Note that
$$\CE^{(n,r)}(g,s;\vphi^0)=\sum_{\gamma \in P_r(F)\bs G(F)}f^{(n,r)}(\gamma g,s;\vphi^0_K).$$
But $\vphi^0_K=\vphi^0$, so $f^{(n,r)}(-,s;\vphi^0_K)=f^{(n,r)}(-,s;\vphi^0)$ is a spherical section in $I^{n,r}(s)$ for $\Re(s)\gg 0$. By the definition of $\vphi^0$, we see from \cite{GI} Lemma 7.3 that
$$f^{(n,r)}(1,s;\vphi^0)=|D_E|^{-r(s+n-\frac{r}{2})/2} \prod_{i=1}^{r} \frac{\xi_E(s+n-\frac{r}{2}-i+1)}{\xi_E(i)}.$$
The desired result then follows.
\end{proof}
{\bf Remark.} Actually, we should have
$$\CE^{(n,r)}(g,s;\vphi^0)=\frac{c_K}{\kappa}|D_E|^{-r(s+n-\frac{r}{2})/2} \prod_{i=1}^{r} \frac{\xi_E(s+n-\frac{r}{2}-i+1)}{\xi_E(i)} E^{(n,r)}(g,s).$$
But as we shall show later that $c_K=\kappa$ (see the remark after Proposition \ref{Prop 5.2}), we omit $c_K/\kappa$ in the above identity.

\subsection{The regularized Siegel-Weil formula (first term identity)}
In this section, we state the first term identity of the regularized Siegel-Weil formula in the range $n\geq 2r$. See \cite{Ich} Theorems 4.1 and 4.2, and \cite{Yam} Theorem 2.2.

\begin{thm}\label{Thm 3.5}
Suppose $\vphi \in S(V^n(\BA))$.

(i) Assume that $n\geq 2r+1$. Then there is a standard section $\Phi \in I^{n,n}(s)$ such that
$$ B^{(n,r)}_{-1}(\vphi)=A^{(n,n-r)}_{-1}(\Phi).$$
Moreover, if $\vphi$ is spherical, then $\Phi$ can be chosen to be a spherical section.

(ii) Assume that $n=2r$. Then
$$ B^{(2r,r)}_{-1}(\vphi)=\frac{1}{2} A^{(2r,r)}_0(\vphi).$$
\end{thm}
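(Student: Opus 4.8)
The statement to prove is the first term identity (Theorem \ref{Thm 3.5}) in the stable range $n\geq 2r$, which splits into case (i) $n\geq 2r+1$ and case (ii) $n=2r$. Both cases follow the Kudla--Rallis/Ichino strategy of computing constant terms along the Siegel parabolic $P=M_nN_n$ of $G=U(n,n)$ and matching the resulting degenerate principal series data. The plan is as follows. First I would recall that $\CE^{(n,r)}(g,s;\vphi)$ is, by the computation preceding Lemma~\ref{Lem 3.4}, an Eisenstein series $\sum_{\gamma\in P_r(F)\bs G(F)}f^{(n,r)}(\gamma g,s;\vphi_K)$ attached to the section $f^{(n,r)}(-,s;\vphi)\in I^{n,r}(s)$ built from the partial Fourier transform $\hat\vphi$ via integration over $GL_r(\BA_E)$. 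The residue (case (i)) or leading Laurent coefficient (case (ii)) at $s=\rho_r=r/2$ of this non-Siegel Eisenstein series must be identified with the residue, respectively the value, at $s=\rho_{n,n-r}=(2r-n)/2$ of a Siegel Eisenstein series $E^{(n,n)}(g,s;\Phi)$. The bridge between the two is provided by the Kudla--Rallis tower of intertwining maps: there is an explicit $G(\BA)$-intertwining operator (the ``Rallis map'' / Siegel-Weil section map) from $I^{n,n}(s')$ to $I^{n,r}(s)$ under a suitable affine change of the spectral parameter, and the image of $\Phi_\vphi$ is exactly $f^{(n,r)}(-,s;\vphi)$ up to the normalizing zeta factors. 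The proof is then a matter of tracking how poles on the two sides correspond under this map.

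The key steps, in order, would be: (1) Identify, via the partial Fourier transform, the section $f^{(n,r)}(-,s;\vphi)$ as the image of $\Phi_\vphi$ under the normalized intertwining integral $M_r^*(s)\colon I^{n,n}(\ast)\to I^{n,r}(s)$ that integrates over the relevant unipotent piece; this is Ichino's construction in \cite{Ich} (cf. \cite{KR2}). (2) Analyze the poles of the Eisenstein series $\CE^{(n,r)}(g,s;\vphi)$ at $s=\rho_r$: when $n\geq 2r+1$ one shows it has at most a simple pole and the residue is again an Eisenstein-series-valued map, which by step (1) coincides with the residue-map of a Siegel Eisenstein series at $s=\rho_{n,n-r}$; this yields $B^{(n,r)}_{-1}(\vphi)=A^{(n,n-r)}_{-1}(\Phi)$ for the section $\Phi$ that is the image of $\Phi_\vphi$ under the intertwining operator, after absorbing zeta factors. (3) For the spherical case, use Lemma~\ref{Lem 3.4} together with \cite{GI} Lemma 7.3: the explicit product of $\xi_E$-values makes the normalization transparent, and one checks the surviving section is spherical by inspection of the constant term at $K_G$. (4) For case (ii) $n=2r$, the Siegel Eisenstein series $E^{(2r,r)}(g,s;\Phi)$ is holomorphic at $s=\rho_{2r,r}=0$, so $A^{(2r,r)}_0$ is just its value there; one computes $B^{(2r,r)}_{-1}(\vphi)$ as the residue at $s=r/2$ of $\CE^{(2r,r)}(g,s;\vphi)$ and matches it, the factor $\tfrac12$ coming from the functional equation $E^{(n,n)}(g,s;\Phi)=E^{(n,n)}(g,-s;M(s)\Phi)$ which relates the value at $s=0$ to twice a residue--type quantity (the ``doubling at the center of symmetry'' phenomenon, exactly as in Kudla--Rallis \cite{KR2} and the analogous unitary statement in \cite{Ich} Theorem 4.2, \cite{Yam} Theorem 2.2).

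Since this theorem is cited directly from \cite{Ich} Theorems 4.1 and 4.2 and \cite{Yam} Theorem 2.2, the cleanest route is to deduce it from those references after matching normalizations of measures, additive characters, and the discriminant factors $|D_E|^{\bullet}$ that appear in our conventions (Section~\ref{Section 2}). Concretely, I would first state the Ichino/Yamana result in their normalization, then record the dictionary between their degenerate principal series and ours (the shift in $s$, the factor $\vphi^0(0)=|D_E|^{-rn/2}$ from Section~\ref{Section 2.5}, and the identification $c_K=\kappa$ promised after Proposition~\ref{Prop 5.2}), and finally transport the identity.

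\medskip
The main obstacle I expect is bookkeeping rather than conceptual: keeping the zeta-factor normalizations and the discriminant powers consistent across the partial Fourier transform, the intertwining operator, and the two Laurent expansions, so that the section $\Phi$ in part (i) comes out genuinely standard (and spherical when $\vphi$ is) with no stray analytic factor. A secondary subtlety is justifying that the residue map $B^{(n,r)}_{-1}$ is again realized by an honest Eisenstein series associated to a standard section (not merely a meromorphic one) — this requires knowing that the pole of $\CE^{(n,r)}(g,s;\vphi)$ at $s=\rho_r$ is carried entirely by the normalizing factor $1/(\kappa P_z(s))$ times a pole of $E(h,s)$, so that the residue integrates the theta kernel against a genuine automorphic form; this is exactly the content of the discussion following Lemma~\ref{Lem 3.4} combined with Ichino's Proposition 2.4 on rapid decrease, and I would invoke it rather than reprove it.
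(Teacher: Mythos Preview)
Your proposal lands on the right references but misplaces the key technical step, and step (1) as written is not correct.

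For part (i), the paper does not go through an intertwining map $M_r^*(s)\colon I^{n,n}(\ast)\to I^{n,r}(s)$; indeed there is no such $G(\BA)$-intertwining operator that sends $\Phi_\vphi$ to $f^{(n,r)}(-,s;\vphi)$ independently of the choice of lift $\vphi$ (both sections are built from $\vphi$, but from different pieces of it, and the map $\Phi_\vphi\mapsto f^{(n,r)}$ does not factor through $I^{n,n}$). The paper's route is much shorter: Yamana's theorem already gives
\[
2\,B^{(n,r)}_{-1}(\vphi)\;=\;E^{(n,n)}(g,\rho_{n,r};\Phi_\vphi),
\]
a \emph{value} of the Siegel Eisenstein series at the negative point $\rho_{n,r}=(2r-n)/2<0$. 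The only remaining work is to convert this value into a \emph{residue} at the positive point $\rho_{n,n-r}=-\rho_{n,r}$. That is done via the functional equation $E^{(n,n)}(g,s;\Phi_\vphi)=\frac{a(s)}{b(s)}E^{(n,n)}(g,-s;M^0(s)\Phi_\vphi)$: since $b(s)$ has a simple pole at $\rho_{n,r}$ while $a(s)$ does not, the value on the left becomes a constant multiple of $\Res_{s=\rho_{n,n-r}}E^{(n,n)}(g,s;M^0(-s)\Phi_\vphi)$, and one takes $\Phi$ to be (the standard section agreeing at $\rho_{n,n-r}$ with) a constant times $M^0(-s)\Phi_\vphi$. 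The spherical claim then follows from Gindikin--Karpelevich applied to $M^0$, not from Lemma~\ref{Lem 3.4}. So the functional equation belongs in case (i), not case (ii).

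For part (ii), your attribution of the factor $\tfrac12$ to the functional equation at $s=0$ is off. The identity $B^{(2r,r)}_{-1}(\vphi)=\tfrac12 A^{(2r,r)}_0(\vphi)$ \emph{is} Ichino's Theorem~4.2; the $\tfrac12$ is simply the reciprocal of the $2$ in the regularized Siegel--Weil formula $E^{(n,n)}(g,\rho_{n,r};\Phi_\vphi)=2\,I(g,\vphi)$, and no further argument is needed. There is no ``doubling at the center of symmetry'' step here.
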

\begin{proof}
(i) \cite{Yam} Theorem 2.2 says that
$$E^{(n,n)}(g,\rho_{n,r};\Phi_{\vphi})=2 I(g,\vphi),$$
where
$$ I(g,\vphi)=c_z^{-1} I(g,\omega(z)\vphi)=\Res_{s=\frac{r}{2}} \CE^{(n,r)}(g,s;\vphi)$$
in the notation of \cite{Ich} p.251.

Recall the functional equation
$$ E^{(n,n)}(g,s;\Phi_{\vphi})=\frac{a(s)}{b(s)}E^{(n,n)}(g,-s;M^0(s)\Phi_{\vphi}),$$
where
$$\begin{aligned}
&M^0(s)=\frac{b(s)}{a(s)}M(s),\\
&a(s)=\prod_{i=1}^n \xi_F(2s-n+i,\eta^{i-1}),\\
&b(s)=\prod_{i=1}^n \xi_F(2s+n+1-i,\eta^{i-1}).
\end{aligned}$$
Since $M^0(s)\Phi_{\vphi}$ is a holomorphic section,
$E^{(n,n)}(g,-s;M^0(s)\Phi_{\vphi})$ has a simple pole at $s=\rho_{n,r}$.
Note that $a(s)$ is holomorphic and non-zero at $s=\rho_{n,r}=(2r-n)/2$, and $b(s)$ has a simple pole at $s=\rho_{n,r}$.

So $E^{(n,n)}(g,s;\Phi_{\vphi})$ is holomorphic at $s=\rho_{n,r}$, and
$$ E^{(n,n)}(g,\rho_{n,r};\Phi_{\vphi})= -\frac{a(\rho_{n,r})}{\Res_{s=\rho_{n,r}}b(s)}\Res_{s=-\rho_{n,r}}E^{(n,n)}(g,s;M^0(-s)\Phi_{\vphi}).$$
Note that
$$\Res_{s=\frac{r}{2}}\CE^{(n,r)}(g,s;\vphi)=I(g,\vphi).$$
If we take $\Phi$ to be the section in $I^{n,n}(s)$ given by
$$\Phi(g,s)=-\frac{1}{2}\frac{a(\rho_{n,r})}{\Res_{s=\rho_{n,r}}b(s)}M^0(-s)\Phi_{\vphi}(g,s),$$
then
$$\Res_{s=\frac{r}{2}}\CE^{(n,r)}(g,s;\vphi)=\Res_{s=\frac{n-2r}{2}}E^{(n,n)}(g,s;\Phi).$$

If $\vphi=\vphi^0$ is spherical, then $\Phi_{\vphi}$ is spherical, and $M^0(-s)\Phi_{\vphi}$ is spherical by the standard Gindikin-Karpelevich argument (see \cite{Yam} $\S 3$), and hence $\Phi$ is spherical.

(ii) This is just \cite{Ich} Theorem 4.2. Note that
$$\Res_{s=\frac{r}{2}}\CE^{(2r,r)}(g,s;\vphi)=I(g,\vphi).$$
\end{proof}

\section{Spherical second term identity}\label{Section 4}
In this section, we shall prove a spherical second term identity of the regularized Siegel-Weil formula in the range $r+1\leq n \leq 2r-1$, which relates the terms $A^{(n,r)}_0(\vphi^0)$ with $B^{(n,r)}_{-1}(\vphi^0)$ for the spherical Schwartz function $\vphi^0$.

\subsection{Spherical Eisenstein series}
Assume that $1\leq r\leq n$.

For each $s_0 \in \BC$, write the Laurent expansion of the spherical Eisenstein series $E^{(n,r)}(g,s)$ at $s=s_0$ as
$$E^{(n,r)}(g,s)=\sum_{d\gg -\infty}(s-s_0)^d \cdot E_d^{(n,r)}(g,s_0).$$

By Lemma \ref{Lem 3.4}, we see that the spherical Eisenstein series $E^{(n,r)}(g,s)$ has a double pole at $s=r/2$ if $r+1\leq n \leq 2r-1$, and it has a simple pole at $s=r/2$ if $n\geq 2r$. Henceforth, we call $n\geq 2r$ {\sl the first term range}, $r+1\leq n \leq 2r-1$ {\sl the second term range}, and we call $n=2r$ {\sl the boundary case}.

Let $Q=P_1$ be the parabolic subgroup of $G=U(\CW)=U(n,n)$ which stabilizes the line $\{(0_n,a,0_{n-1}|a\in E\}$ in $\CW$. Then $Q=M_1 N_1$, where the Levi factor $M_1\cong R_{E/F}\BG_m \times U(n-1,n-1)$. Consider the constant term $E^{(n,r)}_Q$ of the spherical Eisenstein series $E^{(n,r)}(g,s)$ along $Q$, which is defined by
$$E_{Q}^{(n,r)}(g,s)=\int_{N_1(F)\bs N_1(\BA)}E^{(n,r)}(ug,s)du,$$
where $N_1$ is the unipotent radical of $Q$, and the measure $du$ is such that $\vol(N_1(F)\bs N_1(\BA))=1$.
Its restriction to $M_1\cong R_{E/F}GL_1 \times U(n-1,n-1)$ can be computed as follows.

\begin{prop}\label{Prop 4.1}
Suppose $n\geq 2$ and $1\leq r \leq n$. The constant term $E^{(n,r)}_Q$ of the spherical Eisenstein series $E^{(n,r)}(s)$, as an automorphic form on $\BA_E^{\times}\times U(n-1,n-1)(\BA)$, can be expressed as follows:
\begin{itemize}
\item If $1<r<n$, then
$$\begin{aligned}
E^{(n,r)}_{Q}((a,g),s)
&=\|a\|^{s+n-\frac{r}{2}}E^{(n-1,r-1)}\left(g,s+\frac{1}{2}\right)\\
& \quad +\|a\|^{r} E^{(n-1,r)}(g,s)F^{(n,r)}(s)\\
& \quad +\|a\|^{-s+n-\frac{r}{2}}  E^{(n-1,r-1)}\left(g,s-\frac{1}{2}\right)G^{(n,r)}(s),
\end{aligned}$$
where
$$\begin{aligned}
& F^{(n,r)}(s)=\frac{\xi_E(s+n-\frac{3r}{2})}{\xi_E(s+n-\frac{r}{2})},\\
&G^{(n,r)}(s)=\frac{\xi_E(2s)}{\xi_E(2s+r-1)} \frac{\xi_F(2s+r-1)}{\xi_F(2s+r)} \frac{\xi_E(s-n+\frac{3r}{2})}{\xi_E(s+n-\frac{r}{2})}.
\end{aligned}$$

\item If $r=n$, then
$$\begin{aligned}
E^{(n,n)}_{Q}((a,g),s)
&=\|a\|^{s+\frac{n}{2}}E^{(n-1,n-1)}\left(g,s+\frac{1}{2}\right)\\
&\quad +\|a\|^{-s+\frac{n}{2}}E^{(n-1,n-1)}\left(g,s-\frac{1}{2}\right)H^{(n)}(s).
\end{aligned}$$
where
$$H^{(n)}(s)=\frac{\xi_E(2s)}{\xi_E(2s+n-1)}\frac{\xi_F(2s+n-1)}{\xi_F(2s+n)}.$$

\item If $r=1$, then
$$\begin{aligned}
&E^{(n,1)}_{Q}((a,g),s)=\|a\|^{s+n-\frac{1}{2}}+\|a\|  E^{(n-1,1)}(g,s)\frac{\xi_E(s+n-\frac{3}{2})}{\xi_E(s+n-\frac{1}{2})}\\
&\quad \quad \quad \quad \quad \quad \quad \quad +\|a\|^{-s+n-\frac{1}{2}}\frac{\xi_F(2s)}{\xi_F(2s+1)}
\frac{\xi_E(s-n+\frac{3}{2})}{\xi_E(s+n-\frac{1}{2})}.\\
\end{aligned}$$

\end{itemize}
Here $\xi_F(s)$ (resp. $\xi_E(s)$) is the normalized complete zeta function of $F$ (resp. of $E$), $(a,g)\in \BA_E^{\times}\times U(n-1,n-1)(\BA)\cong M_1(\BA)$, and $\|a\|$ is the absolute value of $a\in \BA_E^{\times}$.
\end{prop}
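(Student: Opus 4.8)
The plan is to compute the constant term $E^{(n,r)}_Q(g,s)$ by unfolding the definition of the spherical Eisenstein series along $Q = P_1 = M_1 N_1$. First I would use the decomposition of the double coset space $P_r(F)\bs G(F)/Q(F)$, which is the central combinatorial input: the orbits of $P_r$ acting on the flag variety $Q\bs G$ (equivalently on isotropic lines in $\CW$) are indexed by the relative position of the line stabilized by $Q$ with respect to the isotropic $r$-space stabilized by $P_r$. For the unitary group $U(n,n)$ there should be exactly three such orbits when $1 < r < n$ (the line lies inside the $r$-space; the line is isotropic-paired-off but outside; the line is in a complementary position), two orbits in the Siegel case $r = n$, and a degenerate count when $r = 1$. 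For each double coset representative $\delta$, I would compute the contribution $\sum_{\gamma \in (\delta^{-1}P_r\delta \cap Q)(F)\bs Q(F)} f^0(\delta\gamma ug, s)$ after integrating $u$ over $N_1(F)\bs N_1(\BA)$; each such piece collapses, via an inner Iwasawa decomposition and an intertwining-type integral over a smaller unipotent, into (a power of $\|a\|$) times a spherical Eisenstein series $E^{(n-1,r')}$ on the smaller group $U(n-1,n-1)$ with a shifted spectral parameter, multiplied by an explicit local-to-global zeta quotient coming from the Gindikin--Karpelevich computation of the relevant rank-one intertwining integral.

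The key steps, in order, are: (1) parametrize $P_r(F)\bs G(F)/Q(F)$ explicitly by choosing matrix representatives $\delta_j$; (2) for each $j$, identify the stabilizer $\delta_j^{-1}P_r\delta_j \cap Q$ and recognize the resulting inner sum over $Q(F)$ as, up to the unipotent integration, an Eisenstein series on $M_1$; (3) carry out the $N_1$-integration, which either leaves a term untouched (the ``constant'' contributions giving the $E^{(n-1,r-1)}(g,s\pm\tfrac12)$ pieces with pure powers of $\|a\|$) or produces a standard rank-one Gindikin--Karpelevich integral $\int_{N_1(\BA)}$-type factor; (4) evaluate that local integral at every place and assemble it into the global normalized-zeta quotients $F^{(n,r)}, G^{(n,r)}, H^{(n)}$, using the functional equation $\xi_k(s) = \xi_k(1-s)$ to put things in the stated form; (5) read off the exponents of $\|a\|$ from the modulus characters $\delta_{P_r}$ and $\delta_Q$ and the normalization $|a_r(p)|^{s + (2n-r)/2}$ in the definition of $I^{n,r}(s)$. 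The normalization constant $|D_E|$ hidden in $\xi_E$ versus $\zeta_E$ must be tracked carefully, but since the answer is stated purely in terms of $\xi_E$ and $\xi_F$, the discriminant factors should organize themselves; cross-checking against Lemma \ref{Lem 3.4} (which gives the value $f^{(n,r)}(1,s;\vphi^0)$) and against the known Siegel-case formula is a useful sanity check.

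The main obstacle I expect is bookkeeping in step (4): correctly matching the rank-one intertwining integral for $U(n,n)$ — whose relevant root is a ``long'' root tied to the quadratic character $\eta$ and the extension $E/F$ — so that the ratio comes out as $\tfrac{\xi_E(2s)}{\xi_E(2s+r-1)}\tfrac{\xi_F(2s+r-1)}{\xi_F(2s+r)}\cdots$ rather than some cousin of it. Concretely, the subtlety is that the unipotent radical $N_1$ has a two-step structure (a line of ``short'' coordinates plus a one-dimensional center coming from the hermitian form), so the local integral is a product of two pieces: one contributing a $\zeta_E$-type factor and one a $\zeta_F(s,\eta)$- and $\zeta_F$-type factor, and getting their arguments and the interplay with the functional equation exactly right is where errors creep in. A secondary but real difficulty is handling the boundary sub-cases $r=1$ and $r=n$ uniformly: when $r=1$ the ``middle'' orbit degenerates (the $E^{(n-1,0)}$ Eisenstein series is just the constant $1$) and when $r=n$ the ``middle'' orbit disappears entirely, so I would treat the generic case $1<r<n$ in full and then specialize, checking that the degenerate pieces match the stated formulas. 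Since this proposition is proved in the appendix (Section \ref{Appendix}), I will defer the detailed verification there; here I have only sketched the structure of the argument.
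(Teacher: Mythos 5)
Your proposal follows essentially the same route as the paper's appendix: the double coset decomposition $P_r\backslash G/Q$ into three orbits (two when $r=n$), identification of each orbit's contribution as a power of $\|a\|$ times a lower-rank spherical Eisenstein series via maps $I^{n,r}(s)\to I^{n-1,r'}(s\pm\tfrac12)$ or $I^{n-1,r}(s)$, and Gindikin--Karpelevich evaluation of the two intertwining integrals root by root (with the long root $2x_r$ producing the $SL_2(F_v)$-type factor $\xi_F(2s+r-1)/\xi_F(2s+r)$ and the roots $x_i\pm x_j$ producing the telescoping $\xi_E$ quotients). The only minor slip is in the $r=1$ degeneration: it is the identity and $w_2$ orbits that collapse to constants (giving the pure $\|a\|^{\pm s+n-\frac12}$ terms), while the $w_1$ orbit still produces $E^{(n-1,1)}(g,s)$.
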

\begin{proof}
This can be proved by the same method as in \cite{J1} Chapter 3. See the Appendix (Section \ref{Appendix}) or \cite{X2} for the detailed proof.
\end{proof}

For $s_0\in \BC$, we write the Laurent expansion of $F^{(n,r)}(s)$ at $s=s_0$ as
$$F^{(n,r)}(s)=\sum_{d\gg -\infty}(s-s_0)^d F^{(n,r)}_d(s_0),$$
and similarly for $G^{(n,r)}(s)$ and $H^{(n)}(s)$.

For $E^{(n,r)}(g,s)$ and $\|a\|$ we simply write $E^{(n,r)}(s)$ and $\|~\|$ respectively, and we write the Laurent expansion of $E^{(n,r)}(s)$ at $s=s_0$ as
$$E^{(n,r)}(s)=\sum_{d\gg -\infty}(s-s_0)^d E^{(n,r)}_d(s_0).$$

\subsection{A lemma}
We record the following lemma which will be frequently used in our later computations. This is an analog of \cite{GT} Lemma 4.2.

\begin{lem}\label{Lem 4.2}
Let $E_1,\ldots, E_k$ and $F_1,\ldots,F_{\ell}$ be automorphic forms on $U(n,n)(\BA)$. Let $r_1,\ldots, r_k$ and $s_1,\ldots, s_{\ell}$ be real numbers such that all the $r_i$'s are distinct and all the $s_j$'s are distinct but some of the $r_i$'s might be the same as some of the $s_j$'s. If
$$\sum_{i=1}^k \|~\|^{r_i}E_i +\sum_{j=1}^{\ell}\|~\|^{s_j}\log\|~\|F_j=0$$
as an automorphic form on $\BA_E^{\times}\times U(n,n)(\BA)$, then all the $E_i$'s and $F_j$'s are zero.
\end{lem}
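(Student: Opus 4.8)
The statement is a linear independence result for functions of the form $\|a\|^{r_i}$ and $\|a\|^{s_j}\log\|a\|$ on $\BA_E^\times$, with coefficients valued in automorphic forms on $U(n,n)(\BA)$. The natural strategy is to exploit the fact that these exponential and exponential-times-log functions are linearly independent as functions of the single $\BG_m$-variable, and to upgrade this to independence over the (infinite-dimensional) coefficient space of automorphic forms. The cleanest way to organize this is to first kill the $\log$ terms by a differentiation/asymptotics argument along the torus, and then handle the remaining pure-exponential relation.

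First I would reduce to a statement about a single real (or $p$-adic) place: fix a place $v$ of $F$ and let $a$ range over $F_v^\times$ (embedded diagonally in $\BA_E^\times$ in the obvious way, so that $\|a\|$ becomes $|a|_v^{2}$ or $|a|_v$ depending on whether $v$ splits); everything else stays fixed. Then the identity becomes, for each fixed $g \in U(n,n)(\BA)$,
$$\sum_{i=1}^k |a|_v^{c r_i} E_i(g) + \sum_{j=1}^{\ell} |a|_v^{c s_j}\log|a|_v \cdot (\text{const})\cdot F_j(g) = 0$$
for all $a \in F_v^\times$, where $c>0$ is the normalization constant. This is a relation among the functions $t \mapsto t^{\alpha}$ and $t \mapsto t^{\alpha}\log t$ on $(0,\infty)$ for finitely many distinct exponents. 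Such functions are classically linearly independent: one sees this, for example, by looking at the asymptotic behavior as $t \to \infty$ (order by decreasing real part of the exponent, then by presence of the $\log$ factor), or by applying the operator $t\frac{d}{dt}$ repeatedly and using a Vandermonde-type argument. Hence the coefficient of each distinct exponential and each distinct exponential-times-log must vanish, i.e. $E_i(g) = 0$ and $F_j(g) = 0$ for all $i,j$. Since $g$ was arbitrary, all the $E_i$ and $F_j$ are identically zero.

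The one point requiring a little care is the precise matching of exponents: an exponent $r_i$ (with no $\log$) and an exponent $s_j = r_i$ (with a $\log$) can coincide, but since $t^{r_i}$ and $t^{r_i}\log t$ are themselves linearly independent, the asymptotic/differentiation argument still separates them. The hypothesis that the $r_i$'s are pairwise distinct and the $s_j$'s are pairwise distinct is exactly what is needed so that, after grouping, each of the finitely many functions $t^{r_i}$, $t^{s_j}\log t$ appears with a single well-defined automorphic-form coefficient; without it one would only get that certain sums vanish. I do not expect any genuine obstacle here — this is essentially \cite{GT} Lemma 4.2 transcribed to the unitary setting, and the proof is formal once one fixes a place and invokes linear independence of $\{t^{\alpha}, t^{\alpha}\log t\}$ over $\BC$. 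One should just note that the argument is insensitive to whether $v$ is archimedean or not, so it applies verbatim, and that "automorphic form" here carries no finiteness or growth subtleties that interfere, since we evaluate pointwise at $g$.
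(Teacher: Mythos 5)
Your proof is correct and follows essentially the same route as the paper: fix a place, restrict the identity to a one-parameter subgroup of $\BA_E^{\times}$ for each fixed $g$, and invoke the classical linear independence of $t^{\alpha}$ and $t^{\alpha}\log t$ to conclude that each coefficient $E_i(g)$, $F_j(g)$ vanishes. The paper simply chooses an archimedean place of $E$ and restricts to $\BR^{+}$, which sidesteps the (harmless but slightly fussier) discreteness of $|a|_v$ at a non-archimedean place.
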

\begin{proof}
Fix an embedding $\BR^{+}\subset \BA_E^{\times}$ by choosing one of the archimedean places of $E$. For a fixed $g\in U(n,n)(\BA)$, we have
$$\sum_{i=1}^k \|~\|^{r_i}E_i(g) +\sum_{j=1}^{\ell}\|~\|^{s_j}\log\|~\|F_j(g)=0$$
as a function on $\BA^{\times}_E$, and by restriction to $\BR^{+}$, we can regard it as a functions on $\BR^{+}$. But it is known that the functions $\|~\|^{r_i}$ and $\|~\|^{s_j}\log\|~\|$, as functions on $\BR^{+}$, are linearly independent over $\BC$. Thus all the $E_i(g)$ and $F_j(g)$ are zero. Since the choice of $g$ is arbitrary, all the $E_i$'s and $F_j$'s are zero.
\end{proof}

\subsection{Spherical first term identity for 1st term range}
In this section, we study the spherical first term identity of the regularized Siegel-Weil formula for $n\geq 2r$.

\begin{prop}\label{Prop 4.3}
[Spherical first term identity for 1st term range]
Assume that $n\geq 2r+1$. Then there exists a non-zero constant $c_{n,r}$ such that
$$E_{-1}^{(n,r)}\left(\frac{r}{2}\right)=c_{n,r}E_{-1}^{(n,n)}\left(\frac{n-2r}{2}\right),$$
where $c_{n,r}$ is given explicitly by
$$c_{n,r}=\prod_{i=1}^r \frac{\xi_F(i,\eta^{i-1})}{\xi_F(r+i,\eta^{r+i})} \times \prod_{i=1}^r \frac{\xi_E(n-2r+i)}{\xi_E(n-i+1)} \times \prod_{i=0}^{n-r-1}\frac{\xi_F(2n-2r-i,\eta^i)}{\xi_F(i+1,\eta^i)}.$$
\end{prop}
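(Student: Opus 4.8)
The plan is to prove the identity by induction on $n$ (for fixed $r$), using the constant-term map along $Q$ from Proposition \ref{Prop 4.1} together with the linear-independence Lemma \ref{Lem 4.2}. The key point is that both sides of the claimed identity are residues of spherical Eisenstein series, so the constant term along $Q$ of each side can be computed explicitly and matched term by term. By Lemma \ref{Lem 3.4} the regularized theta integral for $\vphi^0$ differs from $E^{(n,r)}(g,s)$ only by an explicit product of completed zeta functions, so all the poles and residues being compared are intrinsic to the spherical Eisenstein series $E^{(n,r)}(s)$ and $E^{(n,n)}(s)$.

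First I would set up the base case. The smallest relevant case is $r=1$ with $n\geq 3$ (or one could start the induction lower and track the boundary case $n=2r$ separately, where Theorem \ref{Thm 3.5}(ii) already gives the first term identity). For $r=1$ the explicit formula for $E^{(n,1)}_Q$ in Proposition \ref{Prop 4.1} has only three terms, with transparent $\|a\|$-exponents $s+n-\tfrac12$, $1$, and $-s+n-\tfrac12$; taking the appropriate residue at $s=\tfrac12$ and comparing with the constant term of $E^{(n,n)}_Q$ at $s=\tfrac{n-2}{2}$ (computed iteratively from the $r=n$ formula) pins down $c_{n,1}$ and checks consistency via Lemma \ref{Lem 4.2}.

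The inductive step is the heart of the argument. Assume the identity holds for $(n-1,r)$ in the appropriate range, i.e. $E^{(n-1,r)}_{-1}(r/2) = c_{n-1,r}\, E^{(n-1,n-1)}_{-1}((n-1-2r)/2)$. Apply the constant-term map $(-)_Q$ to both sides of the proposed $(n,r)$ identity. On the left, use the $1<r<n$ case of Proposition \ref{Prop 4.1}: writing out the Laurent expansion at $s=r/2$, the three terms contribute $\|a\|$-powers $s+n-r/2$, $r$, $-s+n-r/2$, and extracting the $(s-r/2)^{-1}$-coefficient gives a combination of (residues and values of) $E^{(n-1,r-1)}(s\pm 1/2)$ and $E^{(n-1,r)}(s)$, weighted by the Laurent coefficients $F^{(n,r)}_d(r/2)$ and $G^{(n,r)}_d(r/2)$. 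On the right, iterate the $r=n$ case of Proposition \ref{Prop 4.1} to compute the constant term of $E^{(n,n)}((n-2r)/2)$ along $Q$, obtaining a similar combination of $E^{(n-1,n-1)}(s\pm 1/2)$ at the relevant point, weighted by Laurent coefficients of $H^{(n)}$. Now match $\|a\|$-exponents: $\|a\|^{-s+n-r/2}$ on the left carries $E^{(n-1,r-1)}(s-1/2)$ near $s=r/2$, i.e. $E^{(n-1,r-1)}$ near $(r-2)/2$ — which lies in the first term range for the pair $(n-1,r-1)$ when $n-1\geq 2(r-1)$, so its relevant residue is governed by the inductive hypothesis (or by Theorem \ref{Thm 3.5}); and on the right $\|a\|^{-s+n/2}$ near $s=(n-2r)/2$ carries $E^{(n-1,n-1)}$ near $(n-1-2r)/2$. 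By Lemma \ref{Lem 4.2} the coefficients of equal $\|a\|$-powers must agree, which forces a recursion $c_{n,r} = c_{n-1,r}\cdot(\text{explicit ratio of }\xi\text{-factors})$; unwinding this telescoping product yields precisely the stated closed form for $c_{n,r}$, and in particular shows $c_{n,r}\neq 0$ since each $\xi$-factor appearing is finite and nonzero at the relevant arguments (this nonvanishing is the one place requiring a small check on the arguments of $\xi_F,\xi_E$).

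The main obstacle I anticipate is bookkeeping: correctly tracking which Laurent coefficient of which Eisenstein series multiplies which power of $\|a\|$ after taking residues, since the double pole at $s=r/2$ in the second term range means residues of $E^{(n,r)}$ involve both $E^{(n,r)}_{-1}$ and $E^{(n,r)}_{-2}$, and one must verify that the $E_{-2}$-contributions either cancel or are handled by the companion first-term-type identity $E^{(n,r)}_{-2} \sim E^{(n,n)}_{-1}$. A secondary subtlety is ensuring the induction's range hypothesis is respected at each step — when $r=2$ the inductive call lands on $r-1=1$, and when $n-1=2r$ one is in the boundary case, so these edge transitions should be stated carefully, invoking Theorem \ref{Thm 3.5}(i)--(ii) as needed. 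Once the exponent-matching is organized cleanly, the identification of $c_{n,r}$ is a routine, if lengthy, manipulation of the functional equation $\xi_k(s)=\xi_k(1-s)$ and the relation $\xi_E(s)=\xi_F(s)\xi_F(s,\eta)$.
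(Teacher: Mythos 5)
Your proposal has the right computational toolkit (the constant term formulas of Proposition \ref{Prop 4.1} plus the linear-independence Lemma \ref{Lem 4.2}), but there is a genuine logical gap at its foundation: you cannot \emph{prove} the identity $E^{(n,r)}_{-1}(\tfrac{r}{2})=c_{n,r}E^{(n,n)}_{-1}(\tfrac{n-2r}{2})$ by comparing constant terms along the single parabolic $Q$. Matching constant terms along one parabolic does not force two automorphic forms to be equal (the difference could, for instance, have a cuspidal component invisible to $(-)_Q$), and Lemma \ref{Lem 4.2} can only be applied once you already know that the difference of the two constant terms is zero --- which presupposes the identity you are trying to establish. The paper's proof disposes of existence in one line: the proportionality of $E^{(n,r)}_{-1}(\tfrac{r}{2})$ and $E^{(n,n)}_{-1}(\tfrac{n-2r}{2})$ is \emph{immediate from Theorem \ref{Thm 3.5}(i)}, i.e.\ from the regularized Siegel--Weil first term identity of Ichino/Yamana applied to the spherical Schwartz function together with Lemma \ref{Lem 3.4}. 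The constant-term computation is then used only to \emph{evaluate} the already-existing constant $c_{n,r}$, which is a legitimate use (comparing any single nonzero coefficient of two proportional forms determines the ratio). You gesture at Theorem \ref{Thm 3.5} only for handling residues in edge ranges, not as the source of the identity itself, so as written your induction is circular.

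A secondary, structural problem: your induction runs over $n$ with $r$ fixed, with recursion $c_{n,r}=c_{n-1,r}\cdot(\ast)$, and you declare the base case to be $r=1$. But for fixed $r\geq 2$ that induction bottoms out at $n=2r+1$, which you never treat; declaring ``$r=1$, $n\geq 3$'' as the base case only works if you also descend in $r$. The paper instead extracts the $\|\cdot\|^{n-r}$ terms from the constant-term identity and obtains the recursion $c_{n,r}=c_{n-1,r-1}\,G^{(n,r)}(\tfrac{r}{2})$, which decreases $n$ and $r$ simultaneously and telescopes cleanly to the directly computable case $r=1$ (where $E^{(n,1)}_Q$ has an explicit leading term and $E^{(n-1,n-1)}_{-1}(\tfrac{n-1}{2})$ is an explicit constant from \cite{Ich}). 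If you repair the logical foundation by first invoking Theorem \ref{Thm 3.5}(i) for existence, I would also recommend adopting the $(n,r)\to(n-1,r-1)$ recursion, since your $(n,r)\to(n-1,r)$ version degenerates at $n=2r+1$ (where $E^{(n-1,n-1)}$ has no pole at $s=0$) and leaves those base cases unresolved.
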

\begin{proof}
The existence of $c_{n,r}$ is immediate from Theorem \ref{Thm 3.5}(i). It remains to compute $c_{n,r}$ explicitly. Note that $n\geq 3$ by assumption. First we compute $c_{n,1}$.
We have
$$E^{(n,1)}_{-1,Q}\left(\frac{1}{2}\right)=c_{n,1}E^{(n,n)}_{-1,Q}\left(\frac{n}{2}-1\right).$$
By Proposition \ref{Prop 4.1}, we obtain
$$\begin{aligned}
&\|~\|E_{-1}^{(n-1,1)}\left(\frac{1}{2}\right)\frac{\xi_E(n-1)}{\xi_E(n)}+\|~\|^{n-1}\frac{\xi_F(1)\xi_E(2-n)}{2\xi_F(2)\xi_E(n)}\\
&=c_{n,1}\left[ \|~\|^{n-1}E^{(n-1,n-1)}_{-1}\left(\frac{n-1}{2}\right)+\|~\| \cdot \Res_{s=\frac{n}{2}-1}E^{(n-1,n-1)}\left(s-\frac{1}{2}\right)H^{(n)}(s)\right].
\end{aligned}$$

Now Lemma \ref{Lem 4.2} allows us to extract all the terms containing $\|~\|^{n-1}$, and we obtain
$$\frac{\xi_F(1)\xi_E(2-n)}{2\xi_F(2)\xi_E(n)}=c_{n,1} E^{(n-1,n-1)}_{-1}\left(\frac{n-1}{2}\right).$$
This implies that $E^{(n-1,n-1)}_{-1}\left(\frac{n-1}{2}\right)$, as an automorphic form on $\BA_E^{\times}\times U(n-2,n-2)(\BA)$, is a constant function. Indeed,
$$E^{(n,n)}_{-1}\left(\frac{n}{2}\right)=\frac{1}{2}\prod_{i=0}^{n-1}\frac{\xi_F(i+1,\eta^i)}{\xi_F(2n-i,\eta^i)}.$$
See \cite{Ich} p.267.

Thus
$$c_{n,1}=\frac{\xi_E(n-1)}{\xi_E(n)}\frac{\xi_F(1)}{\xi_F(2)}\prod_{i=0}^{n-2}\frac{\xi_F(2n-2-i,\eta^i)}{\xi_F(i+1,\eta^i)}.$$

Now we compute $c_{n,r}$ for $r\geq 2$.
We have
$$E^{(n,r)}_{-1,Q}\left(\frac{r}{2}\right)=c_{n,r}E^{(n,n)}_{-1,Q}\left(\frac{n}{2}-r \right).$$
By Proposition \ref{Prop 4.1}, we obtain
$$\begin{aligned}
&\|~\|^n E_{-1}^{(n-1,r-1)}\left(\frac{r+1}{2}\right)+\|~\|^{r} \Res_{s=\frac{r}{2}}\left\{E^{(n-1,r)}(s)F^{(n,r)}(s)\right\}\\
&\quad \quad +\|~\|^{n-r}\Res_{s=\frac{r}{2}}\left\{E^{(n-1,r-1)}\left(s-\frac{1}{2}\right)G^{(n,r)}(s)\right\}\\
&=c_{n,r}\left[\|~\|^{n-r}E_{-1}^{(n-1,n-1)}\left(\frac{n+1}{2}-r\right)+\|~\|^r \Res_{s=\frac{n}{2}-r}\left\{E^{(n-1,n-1)}\left(s-\frac{1}{2}\right)H^{(n)}(s)\right\}\right].
\end{aligned}$$
Note that $n-r\neq n$ and $n-r\neq r$. By Lemma \ref{Lem 4.2} we can extract the terms containing $\|~\|^{n-r}$ and obtain
$$\Res_{s=\frac{r}{2}}\left\{E^{(n-1,r-1)}\left(s-\frac{1}{2}\right)G^{(n,r)}(s)\right\}=c_{n,r} E_{-1}^{(n-1,n-1)}\left(\frac{n+1}{2}-r\right),$$
or
$$E^{(n-1,r-1)}_{-1}\left(\frac{r-1}{2}\right)G^{(n,r)}\left(\frac{r}{2}\right)=c_{n,r} E_{-1}^{(n-1,n-1)}\left(\frac{n+1}{2}-r \right).$$
But
$$E^{(n-1,r-1)}_{-1}\left(\frac{r-1}{2}\right)=c_{n-1,r-1}E_{-1}^{(n-1,n-1)}\left(\frac{n+1}{2}-r \right),$$
so we obtain
$$\begin{aligned}
c_{n,r}
&=c_{n-1,r-1} G^{(n,r)}\left(\frac{r}{2}\right)\\
&=\frac{\xi_F(r)\xi_F(r,\eta)}{\xi_F(2r-1,\eta)\xi_F(2r)}\frac{\xi_E(n-2r+1)}{\xi_E(n)}c_{n-1,r-1}.
\end{aligned}$$
Thus
$$c_{n,r}=\prod_{i=1}^r \frac{\xi_F(i,\eta^{i-1})}{\xi_F(r+i,\eta^{r+i})} \times \prod_{i=1}^r \frac{\xi_E(n-2r+i)}{\xi_E(n-i+1)} \times \prod_{i=0}^{n-r-1}\frac{\xi_F(2n-2r-i,\eta^i)}{\xi_F(i+1,\eta^i)}.$$

\end{proof}

Before proceeding further, we record the following auxiliary lemma about the poles of the spherical Eisenstein series.
\begin{lem}\label{Lem 4.4}
(i) Suppose $n\geq r$. Then $E^{(n,r)}_{-2}(s_0)=0$ for $s_0\geq (r+1)/2$.

(ii) Suppose $n\geq 2r$. Then $E^{(n,r)}_{-2}\left(\frac{r}{2}\right)=0$.

(iii) Suppose $n\geq 2r+1$. Then $E^{(n,r)}_{-1}\left(\frac{r+2}{2}\right)=0$.

\end{lem}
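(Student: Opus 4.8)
The plan is to deduce all three vanishing statements from Lemma~\ref{Lem 3.4}, which expresses the spherical Eisenstein series as
$$E^{(n,r)}(g,s)=|D_E|^{r(s+n-r/2)/2}\left(\prod_{i=1}^{r}\frac{\xi_E(i)}{\xi_E(s+n-\tfrac r2-i+1)}\right)\CE^{(n,r)}(g,s;\vphi^0),$$
together with the integral representation $\CE^{(n,r)}(g,s;\vphi^0)=\frac{1}{\kappa\,P_z(s)}\int_{[H]}\theta(g,h;\omega(z)\vphi^0)E(h,s)\,dh$ and the fact that $\theta(g,h;\omega(z)\vphi^0)$ is rapidly decreasing on $[H]$ (\cite{Ich}, Proposition~2.4). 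Since this theta kernel does not depend on $s$, the $[H]$-integral is holomorphic in $s$ wherever $E(h,s)$ is; and $E(h,s)$ is, up to the spherical normalization $\Psi(h,s)=|a(h)|^{s+r/2}$, the spherical Siegel Eisenstein series on $H\cong U(r,r)$, so by V.~Tan's description of its poles \cite{Tan2} it is holomorphic in $\Re(s)\ge 0$ except for the simple pole at $s=r/2$ (and simple poles at $r/2-1,r/2-2,\dots$, which are irrelevant here). Hence the order of the pole of $\CE^{(n,r)}(g,s;\vphi^0)$ at a point $s_0$ is at most the order of the zero of $P_z$ at $s_0$, plus one more when $s_0=r/2$.

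The work then reduces to two elementary computations. First, the zeros of $P_z(s)=\prod_{j=1}^{r}\bigl(C_{s-(r+1)/2+j}-C_{n-r+1/2}\bigr)$ in $\Re(s)\ge 0$: since $C_x=q_v^{x}+q_v^{-x}$ is even and strictly increasing on $[0,\infty)$ and $n-r+\tfrac12>0$, the equation $C_{s-(r+1)/2+j}=C_{n-r+1/2}$ forces $s-\tfrac{r+1}{2}+j=\pm(n-r+\tfrac12)$; the solutions with $\Re(s)\ge 0$ are exactly $s=n-\tfrac r2-k$, $0\le k\le r-1$, each a simple zero coming from a single factor, while all other solutions have $\Re(s)\le \tfrac r2-n<0$. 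In particular $P_z$ has only simple zeros on $\{\Re(s)\ge(r+1)/2\}$, and $P_z(r/2)\ne 0$ as soon as $n\ge 2r$ (the index $k=n-r$ then exceeds $r-1$) and $P_z((r+2)/2)\ne 0$ as soon as $n\ge 2r+1$ (then $k=n-r-1>r-1$). Second, the transfer factor $|D_E|^{r(s+n-r/2)/2}\prod_{i=1}^r\xi_E(i)/\xi_E(s+n-\tfrac r2-i+1)$ is holomorphic and non-vanishing at each of the relevant points: for $s_0\ge(r+1)/2$ and $n\ge r$, and also for $s_0=r/2$ with $n\ge 2r$ and for $s_0=(r+2)/2$ with $n\ge 2r+1$, all the arguments $s_0+n-\tfrac r2-i+1$ ($1\le i\le r$) are $>1$, a region where $\xi_E$ is holomorphic and non-zero (absolutely convergent Euler product, positive archimedean factors), and the constant $\prod_{i=1}^r\xi_E(i)$ is finite and non-zero under the convention $\xi_E(1)=\Res_{s=1}\xi_E(s)$. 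Thus at each such $s_0$ the pole order of $E^{(n,r)}(g,s)$ coincides with that of $\CE^{(n,r)}(g,s;\vphi^0)$.

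Granting these two facts, the three assertions follow at once. For (i): when $s_0\ge(r+1)/2$ one has $s_0>r/2$, so the $[H]$-integral is holomorphic at $s_0$ and $1/P_z$ has at most a simple pole there, whence $\CE^{(n,r)}(g,s;\vphi^0)$, and therefore $E^{(n,r)}(g,s)$, has at most a simple pole at $s_0$, i.e.\ $E^{(n,r)}_{-2}(s_0)=0$. For (ii): when $n\ge 2r$, $P_z(r/2)\ne 0$, so at $s_0=r/2$ the only singularity of $\CE^{(n,r)}(g,s;\vphi^0)$ is the simple pole coming from $E(h,s)$, giving $E^{(n,r)}_{-2}(r/2)=0$ after transfer. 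For (iii): when $n\ge 2r+1$, $P_z((r+2)/2)\ne 0$ and $(r+2)/2>r/2$, so $\CE^{(n,r)}(g,s;\vphi^0)$ and hence $E^{(n,r)}(g,s)$ is holomorphic at $(r+2)/2$, so $E^{(n,r)}_{-1}((r+2)/2)=0$. I do not expect a genuine obstacle; the only point demanding care is the two bookkeeping steps above, namely the exact zero set of $P_z(s)$ and the verification that the arguments of the $\xi_E$-factors never land on $\{0,1\}$ near the relevant $s_0$, both of which amount to a handful of linear inequalities in $n,r,s_0$. (Alternatively one can argue inductively on $n$ via the constant-term formula of Proposition~\ref{Prop 4.1} and Lemma~\ref{Lem 4.2}, in the spirit of the rest of the section, but then one must additionally exclude coincidences between poles of the lower-rank Eisenstein series and of the zeta factors.)
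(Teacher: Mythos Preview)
Your proof is correct and follows essentially the same route as the paper: both arguments invert Lemma~\ref{Lem 3.4} to pass from $E^{(n,r)}$ to the regularized theta integral, bound the pole order of the latter via the simple pole of $E(h,s)$ at $s=r/2$ and the zero order of $P_z(s)$, and then check that the $\xi_E$-product transfer factor is holomorphic and non-vanishing at the relevant points. Your version is somewhat more explicit about the zero set of $P_z$ and the inequalities ensuring the $\xi_E$-arguments exceed $1$; one minor slip is your claimed bound ``$\Re(s)\le r/2-n$'' on the minus-sign solutions (the correct upper bound is $3r/2-1-n\le r/2-1$), but this is harmless since $r/2-1<(r+1)/2$ anyway.
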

\begin{proof}
By Lemma \ref{Lem 3.4}, we have
$$\CE^{(n,r)}(g,s)=|D_E|^{-r(s+n-\frac{r}{2})/2} \prod_{i=1}^r \frac{\xi_E(s+n-\frac{r}{2}-i+1)}{\xi_E(i)} \cdot E^{(n,r)}(g,s).$$

Recall the spherical regularized theta integral is given by
$$\CE^{(n,r)}(g,s)=\frac{1}{\kappa \cdot P_z(s)}\int_{[H]} \theta(g,h;\omega(z)\vphi^0)E(h,s)dh,$$
where
$$P_z(s)=\prod_{j=1}^r(C_{s-\frac{r+1}{2}+j}-C_{n-r+\frac{1}{2}}),$$
and $C_s=q^s+q^{-s}$.

The spherical Eisenstein series $E(h,s)$ is holomorphic in $\Re(s)>r/2$ and has a simple pole at $s=r/2$.

(i) Suppose $s_0\geq (r+1)/2$. Then $P_z(s)$ has at most a simple zero at $s_0$, and thus $\CE^{(n,r)}(s)$ has at most a simple pole at $s_0$. Since $\prod_{i=1}^r \xi_E(s+n-\frac{r}{2}-i+1)$ is holomorphic and non-zero at $s_0$, we see that $E^{(n,r)}(s)$ has at most a simple pole at $s_0$, and hence $E^{(n,r)}_{-2}(s_0)=0$.

(ii) If $n\geq 2r$, then $P_z(s)$ is non-zero at $s=r/2$, but $E(h,s)$ has a simple pole at $s=r/2$, thus the regularized theta integral $\CE^{(n,r)}(s)$ has a simple pole at $s=r/2$. Since $\prod_{i=1}^r \xi_E(s+n-\frac{r}{2}-i+1)$ is holomorphic and non-zero at $s=r/2$, we see that the spherical Eisenstein series $E^{(n,r)}(s)$ has a simple pole at $s=r/2$ if $n\geq 2r$, and hence $E^{(n,r)}_{-2}(\frac{r}{2})=0$.

(iii) In this case, $P_z(s)$ is nonzero at $s=(r+2)/2$, and $\prod_{i=1}^r \xi_E(s+n-\frac{r}{2}-j+1)$ is holomorphic and non-zero at $s=(r+2)/2$, so $E^{(2r+1,r)}(s)$ is holomorphic at $s=(r+2)/2$, and hence $E^{(2r+1,r)}_{-1}(\frac{r+2}{2})=0$.
\end{proof}

Now we consider the spherical first term identity on the boundary.
\begin{prop}\label{Prop 4.5}
[Spherical first term identity on the boundary]
Assume that $n=2r$. Then there is a constant $c_r$ such that
$$E_{-1}^{(2r,r)}\left(\frac{r}{2}\right)=c_r E_0^{(2r,2r)}(0),$$
where
$$c_r=\frac{1}{2}\prod_{i=1}^r \frac{\xi_E(i)}{\xi_E(r+i)}.$$
\end{prop}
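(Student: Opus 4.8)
The plan is to specialize the boundary-case first term identity of Theorem~\ref{Thm 3.5}(ii) to the spherical Schwartz function $\vphi^0$ and to feed in the explicit evaluation of the spherical regularized theta integral from Lemma~\ref{Lem 3.4}. Because Theorem~\ref{Thm 3.5}(ii) already furnishes an identity relating $B^{(2r,r)}_{-1}(\vphi)$ and $A^{(2r,r)}_0(\vphi)$ valid for every $\vphi\in S(V^n(\BA))$, no inductive constant-term argument is needed here (in contrast with the proof of Proposition~\ref{Prop 4.3}); the work is just the bookkeeping of the archimedean $\Gamma$-factors and the powers of $|D_E|$.

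First I would record both sides for $\vphi=\vphi^0$ and $n=2r$, so that $\rho_r=r/2$ and $\rho_{2r,r}=0$. On the Siegel side, $\Phi_{\vphi^0}$ is the spherical section with $\Phi_{\vphi^0}(1)=\vphi^0(0)=|D_E|^{-r^2}$, hence $E^{(2r,2r)}(g,s;\Phi_{\vphi^0})=|D_E|^{-r^2}E^{(2r,2r)}(g,s)$; by V.~Tan's determination of the poles (\cite{Tan2}) the point $s=0$ is not among the poles $\{r,r-1,\dots,1\}$ of the Siegel Eisenstein series, so this series is holomorphic at $s=0$, giving $A^{(2r,r)}_{-1}(\vphi^0)=0$ and $A^{(2r,r)}_0(\vphi^0)=|D_E|^{-r^2}E^{(2r,2r)}_0(0)$. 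On the theta side, Lemma~\ref{Lem 3.4} with $n=2r$ reads
$$\CE^{(2r,r)}(g,s;\vphi^0)=|D_E|^{-r(s+3r/2)/2}\prod_{i=1}^r\frac{\xi_E(s+3r/2-i+1)}{\xi_E(i)}\;E^{(2r,r)}(g,s).$$
At $s=r/2$ the prefactor equals $|D_E|^{-r^2}\prod_{i=1}^r\xi_E(2r-i+1)/\xi_E(i)$, which is finite and non-zero (the numerator arguments run over $\{r+1,\dots,2r\}$ and $\xi_E(1)$ denotes its non-zero residue) and is holomorphic near $s=r/2$; since $E^{(2r,r)}(g,s)$ has at most a simple pole there by Lemma~\ref{Lem 4.4}(ii), so does $\CE^{(2r,r)}(g,s;\vphi^0)$, whence $B^{(2r,r)}_{-2}(\vphi^0)=0$ and, taking residues at $s=r/2$,
$$B^{(2r,r)}_{-1}(\vphi^0)=|D_E|^{-r^2}\prod_{i=1}^r\frac{\xi_E(2r-i+1)}{\xi_E(i)}\;E^{(2r,r)}_{-1}\!\left(\tfrac{r}{2}\right).$$

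Next I would invoke $B^{(2r,r)}_{-1}(\vphi^0)=\tfrac12 A^{(2r,r)}_0(\vphi^0)$ from Theorem~\ref{Thm 3.5}(ii) and substitute the two displays above. The common factor $|D_E|^{-r^2}$ cancels, leaving
$$\prod_{i=1}^r\frac{\xi_E(2r-i+1)}{\xi_E(i)}\;E^{(2r,r)}_{-1}\!\left(\tfrac{r}{2}\right)=\tfrac12\,E^{(2r,2r)}_0(0).$$
Reindexing $\prod_{i=1}^r\xi_E(2r-i+1)=\prod_{i=1}^r\xi_E(r+i)$ then gives $E^{(2r,r)}_{-1}(r/2)=c_r\,E^{(2r,2r)}_0(0)$ with $c_r=\tfrac12\prod_{i=1}^r\xi_E(i)/\xi_E(r+i)$, as claimed.

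There is no genuinely hard step; the argument is entirely a matter of matching normalizations. The only points requiring care are: (a) checking that the zeta prefactor in Lemma~\ref{Lem 3.4} is holomorphic and non-vanishing at $s=r/2$, so that taking residues passes through it cleanly; (b) using Lemma~\ref{Lem 4.4}(ii) to confirm that $\CE^{(2r,r)}(g,s;\vphi^0)$ has only a simple pole at $s=r/2$, so that $B^{(2r,r)}_{-1}(\vphi^0)$ is literally its residue; and (c) tracking the two independent sources of $|D_E|$-powers, namely $\vphi^0(0)=|D_E|^{-rn/2}$ on the Siegel side and the normalization $|D_E|^{-r(s+n-r/2)/2}$ in Lemma~\ref{Lem 3.4}, and noting that they coincide at $n=2r$, $s=r/2$ and hence cancel. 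The substantive inputs — Ichino's boundary first term identity (Theorem~\ref{Thm 3.5}(ii)) and the evaluation of $f^{(n,r)}(1,s;\vphi^0)$ from \cite{GI} underlying Lemma~\ref{Lem 3.4} — are already available.
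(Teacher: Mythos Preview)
Your proof is correct, and in fact the paper explicitly mentions this route in the remark immediately following its own proof of the proposition. The paper's primary argument, however, proceeds differently: it takes the constant term along $Q$ of the first-term-range identity $E^{(2r+1,r)}_{-1}(r/2) = c_{2r+1,r}\, E^{(2r+1,2r+1)}_{-1}(1/2)$ from Proposition~\ref{Prop 4.3}, expands both sides via Proposition~\ref{Prop 4.1}, and compares the $\|~\|^r$-terms using Lemma~\ref{Lem 4.2} to extract $c_r$ (with a separate computation for $r=1$). Your approach is shorter and more direct, but the paper's constant-term method has a logical advantage within the paper's own structure: it does not depend on the precise constant in Lemma~\ref{Lem 3.4}, which (per the remark following that lemma) should really carry an a priori undetermined factor $c_K/\kappa$. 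The paper later \emph{uses} the value of $c_r$ obtained in Proposition~\ref{Prop 4.5} to prove $c_K=\kappa$ (see the remark after Proposition~\ref{Prop 5.2}); adopting your argument as the primary proof would render that deduction circular. So your computation is best understood as the consistency check the paper alludes to, valid once $c_K=\kappa$ has been established independently.
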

\begin{proof}
The existence of $c_r$ is immediate from Theorem \ref{Thm 3.5} (ii). Now we compute $c_r$ explicitly.
First assume that $r>1$. By Proposition \ref{Prop 4.1}, we have
$$\begin{aligned}
E^{(2r+1,r)}_{-1,Q}(\frac{r}{2})
&=\|~\|^{2r+1}E^{(2r,r-1)}_{-1}\left(\frac{r+1}{2}\right)\\
&\quad +\|~\|^r \Res_{s=\frac{r}{2}}\left\{E^{(2r,r)}(s)F^{(2r+1,r)}(s)\right\}\\
&\quad +\|~\|^{r+1}\Res_{s=\frac{r}{2}}\left\{E^{(2r,r-1)}\left(s-\frac{1}{2}\right)G^{(2r+1,r)}(s)\right\}.
\end{aligned}$$

On the other hand, we have, by Proposition \ref{Prop 4.3} and Proposition \ref{Prop 4.1}, that
$$\begin{aligned}
E^{(2r+1,r)}_{-1,Q}\left(\frac{r}{2}\right)
&=c_{2r+1,r}E^{(2r+1,2r+1)}_{-1,Q}\left(\frac{1}{2}\right)\\
&=c_{2r+1,r} \left[\|~\|^{r+1}E_{-1}^{2r,2r}(1)+\|~\|^r \Res_{s=\frac{1}{2}}\left\{E^{(2r,2r)}\left(s-\frac{1}{2}\right)H^{(2r+1)}(s)\right\} \right].
\end{aligned}$$

Comparing the terms containing $\|~\|^r$, we obtain
$$\Res_{s=\frac{r}{2}}\left\{E^{(2r,r)}(s)F^{(2r+1,r)}(s)\right\}=c_{2r+1,r}\Res_{s=\frac{1}{2}}\left\{ E^{(2r,2r)}\left(s-\frac{1}{2} \right)H^{(2r+1)}(s)\right\}.$$

Note that
$$\Res_{s=\frac{r}{2}}\left\{E^{(2r,r)}(s)F^{(2r+1,r)}(s)\right\}=E^{(2r,r)}_{-1}\left(\frac{r}{2} \right)F^{(2r+1,r)}\left(\frac{r}{2} \right),$$
and
$$\Res_{s=\frac{1}{2}}\left\{E^{(2r,2r)}\left(s-\frac{1}{2}\right)H^{(2r+1)}(s)\right \}=E^{(2r,2r)}_{0}(0)H^{(2r+1)}_{-1}\left(\frac{1}{2} \right).$$

So we have
$$E^{(2r,r)}_{-1}\left(\frac{r}{2}\right)F^{(2r+1,r)}\left(\frac{r}{2}\right)=c_{2r+1,r}E^{(2r,2r)}_{0}(0)H^{(2r+1)}_{-1} \left(\frac{1}{2}\right),$$
namely
$$c_r F^{(2r+1,r)}\left(\frac{r}{2}\right)=c_{2r+1,r}H^{(2r+1)}_{-1}\left(\frac{1}{2}\right).$$

Since
$$F^{(2r+1,r)}\left(\frac{r}{2}\right)=\frac{\xi_E(r+1)}{\xi_E(2r+1)},$$
and
$$H^{(2r+1)}_{-1}\left(\frac{1}{2}\right)=\frac{1}{2}\frac{\xi_E(1)\xi_F(2r+1)}{\xi_E(2r+1)\xi_F(2r+2)},$$
we have
$$c_r=\frac{c_{2r+1,r}}{2}\frac{\xi_E(1)\xi_F(2r+1)}{\xi_E(r+1)\xi_F(2r+2)}.$$

But
$$c_{2r+1,r}=\frac{\xi_F(2r+2)}{\xi_F(2r+1)}\prod_{i=2}^r \frac{\xi_E(i)}{\xi_E(r+i)},$$
so
$$c_r=\frac{1}{2}\prod_{i=1}^r \frac{\xi_E(i)}{\xi_E(r+i)}.$$

Now we consider the case $r=1$.

We have
$$E^{(3,1)}_{-1,Q}\left(\frac{1}{2}\right)=\|~\|E^{(2,1)}_{-1}\left(\frac{1}{2}\right)\frac{\xi_E(2)}{\xi_E(3)}+\text{other terms}.$$
On the other hand,
$$E^{(3,1)}_{-1,Q}\left(\frac{1}{2}\right)=c_{3,1}E^{(3,3)}_{-1,Q}\left(\frac{1}{2}\right),$$
where
$$c_{3,1}=\frac{\xi_F(4)}{\xi_F(3)}.$$
But
$$E^{(3,3)}_{-1,Q}\left(\frac{1}{2}\right)=\|~\|E^{(2,2)}_0(0)H^{(3)}_{-1}\left(\frac{1}{2}\right)+\text{other terms},$$
where
$$H^{(3)}_{-1}\left(\frac{1}{2}\right)=\frac{1}{2}\frac{\xi_E(1)}{\xi_E(3)}\frac{\xi_F(3)}{\xi_F(4)}.$$
Comparing the terms containing $\|~\|$ in the above two expressions of $E^{(3,1)}_{-1,Q}(\frac{1}{2})$, we obtain
$$E^{(2,1)}_{-1}\left(\frac{1}{2}\right)\frac{\xi_E(2)}{\xi_E(3)}=c_{3,1}E^{(2,2)}_0(0)H^{(3)}_{-1}\left(\frac{1}{2}\right),$$
i.e.
$$E^{(2,1)}_{-1}\left(\frac{1}{2}\right)=\frac{1}{2}\frac{\xi_E(1)}{\xi_E(2)} E^{(2,2)}_0(0)=c_1 E^{(2,2)}_0(0),$$
where
$$c_1=\frac{1}{2}\frac{\xi_E(1)}{\xi_E(2)}.$$
\end{proof}

{\bf Remark}. The explicit value of $c_r$ can also be obtained by using Lemma \ref{Lem 3.4} and Theorem \ref{Thm 3.5}(ii).

\subsection{Idea of the proof of the first and second term identities for 2nd term range}
In the following, we give a brief description of the proof of the spherical first and second term identities for 2nd term range. The idea of the proof uses a clever inductive method which is due to Gan-Takeda \cite{GT}.

First we consider the spherical first term identity for 2nd term range.

Consider the case $n=2r-1$.
It follows from Lemma \ref{Lem 4.4} (ii) that
$$ E^{(2r,r)}_{-2,Q}\left(\frac{r}{2}\right)=0.$$
We use Proposition \ref{Prop 4.1} to compute the term $E^{(2r,r)}_{-2,Q}(\frac{r}{2})$, and then we obtain the spherical first term identity for $n=2r-1$ immediately:
$$ E^{(2r-1,r)}_{-2}\left(\frac{r}{2}\right)=d_{2r-1,r} E^{(2r-1,2r-1)}_{-1}\left(\frac{1}{2}\right).$$

In general, for $r+1\leq n \leq 2r-1$, let $j=2r-1-n$. Then $0\leq j \leq r-2$. We use induction on $j$. The case $j=0$ is the spherical first term identity for $n=2r-1$. For $j=1$, if we take the constant term along $Q$ of the identity for $j=0$, then we obtain
$$ E^{(2r-1,r)}_{-2,Q}\left(\frac{r}{2}\right)=d_{2r-1,r} E^{(2r-1,2r-1)}_{-1,Q}\left(\frac{1}{2}\right).$$
We compute the terms $E^{(2r-1,r)}_{-2,Q}(\frac{r}{2})$ and $E^{(2r-1,2r-1)}_{-1,Q}(\frac{1}{2})$ by using Proposition \ref{Prop 4.1}. Then we can use Lemma \ref{Lem 4.2} to obtain the spherical first term identity for $n=2r-2$.

The case for general $j$ is similar: take constant term of the identity for $j-1$, and use Proposition \ref{Prop 4.1} and Lemma \ref{Lem 4.2}. See Proposition \ref{Prop 4.6} for the full statement.\\

Next we consider the spherical second term identity for 2nd term range.

First consider the case $n=2r-1$.
Taking the constant term along $Q$ of the spherical first term identity for $n=2r$, we obtain
$$E_{-1,Q}^{(2r,r)}\left(\frac{r}{2}\right)=c_r E_{0,Q}^{(2r,2r)}(0).$$
We compute the terms $E_{-1,Q}^{(2r,r)}(\frac{r}{2})$ and $E_{0,Q}^{(2r,2r)}(0)$ by Proposition \ref{Prop 4.1}. Then we can use Lemma \ref{Lem 4.2} to obtain
$$\begin{aligned}
& E^{(2r-1,r)}_{-2}\left(\frac{r}{2}\right)\cdot a_1 +E^{(2r-1,r)}_{-1}\left(\frac{r}{2}\right) \cdot a_2\\
& \quad \quad +E^{(2r-1,r-1)}_{-1}\left(\frac{r-1}{2}\right) \cdot a_3 + E^{(2r-1,r-1)}_0\left(\frac{r-1}{2}\right) \cdot a_4\\
&=E^{(2r-1,2r-1)}_0\left(\frac{1}{2}\right) \cdot a_5 +E^{(2r-1,2r-1)}_{-1}\left(\frac{1}{2}\right)\cdot a_6,
\end{aligned}$$
where each $a_i$ is an explicit non-zero constant.
Using the following first term identities
$$\begin{aligned}
&E^{(2r-1,r-1)}_{-1}\left(\frac{r-1}{2}\right)=c_{2r-1,r-1}E^{(2r-1,2r-1)}_{-1}\left(\frac{1}{2}\right),\\
&E^{(2r-1,r)}_{-2}\left(\frac{r}{2}\right)=d_{2r-1,r}E^{(2r-1,2r-1)}_{-1}\left(\frac{1}{2}\right),
\end{aligned}$$
we can simplify the above expression and obtain the spherical second term identity for $n=2r-1$ (see Proposition \ref{Prop 4.7}).

In general, for $r+1\leq n \leq 2r-1$, let $j=2r-1-n$. Then $0\leq j \leq r-2$. We use induction on $j$ to obtain the spherical second term identity. The case $j=0$ is the spherical second term identity for $n=2r-1$. For $j=1$, if we take the constant term along $Q$ of the identity for $j=0$, we obtain
$$\begin{aligned}
&E_{-1,Q}^{(2r-1,r)}\left(\frac{r}{2}\right) \cdot b_1 + E_{0,Q}^{(2r-1,r-1)}\left(\frac{r-1}{2}\right) \cdot b_2\\
&\quad \quad \quad  =E_{0,Q}^{(2r-1,2r-1)}\left(\frac{1}{2}\right) \cdot b_3+ E_{-1,Q}^{(2r-1,2r-1)}\left(\frac{1}{2}\right)\cdot b_4,
\end{aligned}$$
where each $b_i$ is an explicit non-zero constant.
Now we compute the four terms $E_{-1,Q}^{(2r-1,r)}(\frac{r}{2})$, $E_{0,Q}^{(2r-1,r-1)}$, $E_{0,Q}^{(2r-1,2r-1)}(\frac{1}{2})$, and $E_{-1,Q}^{(2r-1,2r-1)}(\frac{1}{2})$ by using Proposition \ref{Prop 4.1}. Then we can use Lemma \ref{Lem 4.2} to obtain the spherical second term identity for $j=1$.

The proof of the spherical second term identity for general $j$ is similar: take the constant term along $Q$ of the identity for $j-1$, simplify the resulting expression by using Proposition \ref{Prop 4.1}, and use Lemma \ref{Lem 4.2}. See Theorem \ref{Thm 4.8} for the full statement.

\subsection{Spherical first term identity for 2nd term range}
In this section, we study the spherical first term identity for 2nd term range, which will be needed in the derivation of the spherical second term identity.
\begin{prop}\label{Prop 4.6}
[Spherical first term identity for 2nd term range]
Assume that $r+1\leq n \leq 2r-1$. Then
$$E^{(n,r)}_{-2}\left(\frac{r}{2}\right)=d_{n,r}E^{(n,n)}_{-1}\left(\frac{2r-n}{2}\right),$$
where
$$\begin{aligned}
&d_{2r-1,r}=c_{2r-1,r-1}\frac{\xi_E(1)\xi_F(2r-1)}{\xi_E(2r-1)\xi_F(2r)},\\
&d_{n-1,r}=d_{n,r}\frac{\xi_E(n)}{\xi_E(n-r)}, \quad \text{for $r+2 \leq n\leq 2r-1$}.
\end{aligned}$$
\end{prop}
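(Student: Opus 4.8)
The plan is to follow the inductive strategy outlined in Section~\ref{Section 4}: prove the identity for $n=2r-1$ directly, and then descend to smaller $n$ by repeatedly taking constant terms along $Q$. Equivalently, set $j=2r-1-n$, so $0\le j\le r-2$, and induct on $j$. Throughout one exploits that $(\ )_Q$ is linear and commutes with extracting Laurent coefficients, so that the constant term of a Laurent coefficient of an Eisenstein series is the corresponding Laurent coefficient of its constant term, which is computed by Proposition~\ref{Prop 4.1}.

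\textbf{Base case $j=0$ ($n=2r-1$).} Start from Lemma~\ref{Lem 4.4}(ii), which gives $E^{(2r,r)}_{-2}(r/2)=0$, hence $E^{(2r,r)}_{-2,Q}(r/2)=0$. Expand $E^{(2r,r)}_Q$ by Proposition~\ref{Prop 4.1} in the range $1<r<2r$ and read off the coefficient of $(s-r/2)^{-2}$. The three summands behave differently: the first, $\|~\|^{s+3r/2}E^{(2r-1,r-1)}(g,s+1/2)$, has at most a simple pole at $s=r/2$ by Lemma~\ref{Lem 4.4}(i) applied to the pair $(2r-1,r-1)$, so it contributes nothing to the $(s-r/2)^{-2}$-coefficient; the middle summand contributes $\|~\|^{r}E^{(2r-1,r)}_{-2}(r/2)F^{(2r,r)}(r/2)$; and the third summand contributes because $G^{(2r,r)}(s)$ itself has a simple pole at $s=r/2$, coming from the factor $\xi_E(s-2r+3r/2)=\xi_E(s-r/2)$, so that $E^{(2r-1,r-1)}(g,s-1/2)G^{(2r,r)}(s)$ has a double pole there. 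Isolating the $\|~\|^{r}$-part by Lemma~\ref{Lem 4.2} (the remaining exponent being $\|~\|^{2r}$) yields
$$E^{(2r-1,r)}_{-2}(r/2)\,F^{(2r,r)}(r/2)+E^{(2r-1,r-1)}_{-1}((r-1)/2)\,G^{(2r,r)}_{-1}(r/2)=0.$$
Since the pair $(2r-1,r-1)$ lies in the first term range (indeed $2r-1=2(r-1)+1$), Proposition~\ref{Prop 4.3} gives $E^{(2r-1,r-1)}_{-1}((r-1)/2)=c_{2r-1,r-1}E^{(2r-1,2r-1)}_{-1}(1/2)$, which turns the last display into the asserted identity for $n=2r-1$. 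Computing $F^{(2r,r)}(r/2)=\xi_E(r)/\xi_E(2r)$ and $G^{(2r,r)}_{-1}(r/2)=\xi_E(0)\,\xi_E(r)\,\xi_F(2r-1)/(\xi_E(2r-1)\,\xi_F(2r)\,\xi_E(2r))$, and using $\xi_E(0)=-\xi_E(1)$ from the functional equation, produces the stated value of $d_{2r-1,r}$.

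\textbf{Inductive step.} Assume the identity for $n+1$, with $r+1\le n\le 2r-2$, and apply $(\ )_Q$ to both sides of $E^{(n+1,r)}_{-2}(r/2)=d_{n+1,r}E^{(n+1,n+1)}_{-1}((2r-n-1)/2)$. Expand the left-hand constant term by Proposition~\ref{Prop 4.1} in the range $1<r<n+1$, and the right-hand one by the Siegel case $r=n+1$ of Proposition~\ref{Prop 4.1}. The crucial simplification is that, because $n+1<2r$, the $\|~\|$-exponents $n+1$, $r$, $n+1-r$ occurring on the left at $s=r/2$ are pairwise distinct, as are the exponents $r$, $n-r+1$ occurring on the right at $s=(2r-n-1)/2$; moreover $E^{(n,r)}$ has a genuine double pole at $r/2$ while $F^{(n+1,r)}$ is holomorphic and nonzero there, and $E^{(n,n)}$ has at most a simple pole at $(2r-n)/2$ while $\|~\|^{s+(n+1)/2}$ introduces no negative $s$-power. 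Hence, extracting the $\|~\|^{r}$-coefficient by Lemma~\ref{Lem 4.2}, only the middle summand survives on each side, and one obtains $E^{(n,r)}_{-2}(r/2)\,F^{(n+1,r)}(r/2)=d_{n+1,r}E^{(n,n)}_{-1}((2r-n)/2)$; since $F^{(n+1,r)}(r/2)=\xi_E(n+1-r)/\xi_E(n+1)$, this is the claimed identity for $n$ with $d_{n,r}=d_{n+1,r}\,\xi_E(n+1)/\xi_E(n+1-r)$, i.e.\ $d_{n-1,r}=d_{n,r}\,\xi_E(n)/\xi_E(n-r)$.

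\textbf{Main obstacle.} The difficulty is almost entirely the bookkeeping needed to legitimately invoke Lemma~\ref{Lem 4.2}: one must pin down the exact pole order (if any) of each building block $E^{(n-1,r-1)}$, $E^{(n-1,r)}$, $F^{(n,r)}$, $G^{(n,r)}$, $H^{(n)}$ at the relevant point, which forces repeated use of Lemma~\ref{Lem 4.4} together with the explicit shape of the $\xi$-ratios. One must in particular notice that the base case is structurally different from the inductive step: there the third exponent $n+1-r$ degenerates to $r$ (because $n+1=2r$), so that summand genuinely enters and the auxiliary first term identity of Proposition~\ref{Prop 4.3} is needed, whereas for $n+1<2r$ all three exponents separate and no auxiliary input is required. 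A minor issue to dispatch separately is the small-$r$ edge behaviour (for instance the extra pole of $\xi_E(2s)$ or of $H^{(n)}$ at $s=1/2$), which only affects $\|~\|$-powers other than $\|~\|^{r}$ and so leaves the extraction above untouched. Finally, collecting all the $\xi$-factors and invoking $\xi_E(s)=\xi_E(1-s)$ gives the closed forms for $d_{2r-1,r}$ and the recursion.
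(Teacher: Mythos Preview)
Your proof is correct and follows essentially the same approach as the paper: establish the $n=2r-1$ case from $E^{(2r,r)}_{-2,Q}(r/2)=0$ together with Proposition~\ref{Prop 4.3}, then descend by taking constant terms along $Q$ and extracting the $\|~\|^r$-component via Lemma~\ref{Lem 4.2}. The only cosmetic difference is that the paper treats the first inductive step $n=2r-2$ separately (because the third summand there has pole pattern simple$\times$simple rather than double$\times$holomorphic), whereas you observe correctly that since the third summand carries exponent $n+1-r\neq r$ in all cases, the $\|~\|^r$-extraction renders this distinction irrelevant.
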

\begin{proof}
Note that $r\geq 2$ by assumption.

First we show the case $n=2r-1$.
We start with the identity
$$E^{(2r,r)}_{-2,Q}\left(\frac{r}{2}\right)=0, $$
which follows immediately from Lemma \ref{Lem 4.4}(ii).

By Proposition \ref{Prop 4.1}, we have
$$\begin{aligned}
&E^{(2r,r)}_{-2,Q}\left(\frac{r}{2}\right)=\|~\|^r E^{(2r-1,r)}_{-2}\left(\frac{r}{2}\right)F^{(2r,r)}\left(\frac{r}{2}\right)\\
&\quad \quad \quad \quad \quad +\|~\|^r E^{(2r-1,r-1)}_{-1}\left(\frac{r-1}{2}\right)G^{(2r,r)}_{-1}\left(\frac{r}{2}\right),
\end{aligned}$$
where we have used the fact that $E^{(2r-1,r-1)}_{-2}(\frac{r+1}{2})=0$ and $E^{(2r-1,r-1)}_{-2}(\frac{r-1}{2})=0$ (Lemma \ref{Lem 4.4} (i),(ii)).

Then we have
$$0= E^{(2r-1,r)}_{-2}\left(\frac{r}{2}\right)F^{(2r,r)}\left(\frac{r}{2}\right)+E^{(2r-1,r-1)}_{-1}\left(\frac{r-1}{2} \right)G^{(2r,r)}_{-1}\left(\frac{r}{2}\right).$$
So
$$\begin{aligned}
&\quad E^{(2r-1,r)}_{-2}\left(\frac{r}{2}\right)\\
&=-E^{(2r-1,r-1)}_{-1}\left(\frac{r-1}{2}\right)G^{(2r,r)}_{-1}\left(\frac{r}{2} \right) F^{(2r,r)}\left(\frac{r}{2}\right)^{-1}\\
&=-c_{2r-1,r-1}E^{(2r-1,2r-1)}_{-1}\left(\frac{1}{2}\right) G^{(2r,r)}_{-1}\left(\frac{r}{2} \right) F^{(2r,r)}\left(\frac{r}{2}\right)^{-1}.
\end{aligned}$$
Thus
$$E^{(2r-1,r)}_{-2}\left(\frac{r}{2}\right)=d_{2r-1,r}E^{(2r-1,2r-1)}_{-1}\left(\frac{1}{2} \right),$$
where
$$d_{2r-1,r}=-c_{2r-1,r-1}G^{(2r,r)}_{-1}\left(\frac{r}{2} \right) F^{(2r,r)}\left(\frac{r}{2} \right)^{-1}.$$
But
$$\begin{aligned}
&G^{(2r,r)}_{-1}\left(\frac{r}{2} \right)=\frac{\xi_E(r)\xi_F(2r-1)\xi_E(0)}{\xi_E(2r-1)\xi_F(2r)\xi_E(2r)},\\
&F^{(2r,r)}\left(\frac{r}{2} \right)=\frac{\xi_E(r)}{\xi_E(2r)}.
\end{aligned}$$
So
$$\begin{aligned}
d_{2r-1,r}
&=-c_{2r-1,r-1}\frac{\xi_E(0)\xi_F(2r-1)}{\xi_E(2r-1)\xi_F(2r)}\\
&=c_{2r-1,r-1}\frac{\xi_E(1)\xi_F(2r-1)}{\xi_E(2r-1)\xi_F(2r)}.
\end{aligned}$$

Next we show the case $r+1 \leq n\leq 2r-2$. Write
$$n=2r-1-j, \quad \text{where $1\leq j \leq r-2$}.$$
We use induction on $j$. First consider the base step $j=1$. Taking the constant term along $Q$ of the identity for $2r-1$, we obtain
$$E^{(2r-1,r)}_{-2,Q}\left(\frac{r}{2} \right)=d_{2r-1,r}E^{(2r-1,2r-1)}_{-1,Q}\left(\frac{1}{2} \right).$$
By Proposition \ref{Prop 4.1}, we have
$$\begin{aligned}
E^{(2r-1,r)}_{-2,Q}\left(\frac{r}{2} \right)
&=\|~\|^{2r-1}E^{(2r-2,r-1)}_{-2}\left(\frac{r+1}{2} \right)\\
&\quad +\|~\|^{r}E^{(2r-2,r)}_{-2}\left(\frac{r}{2} \right)F^{(2r-1,r)}\left(\frac{r}{2} \right)\\
&\quad +\|~\|^{r-1}E^{(2r-2,r-1)}_{-1}\left(\frac{r-1}{2} \right) G^{(2r-1,r)}_{-1}\left(\frac{r}{2} \right),
\end{aligned}$$
where we have used the fact that $E^{(2r-2,r-1)}_{-2}(\frac{r-1}{2})=0$ (Lemma \ref{Lem 4.4} (ii)).

But
$$\begin{aligned}
&E^{(2r-2,r-1)}_{-2}\left(\frac{r+1}{2} \right)=0 \quad(\text{Lemma \ref{Lem 4.4} (i)}),\\
&E^{(2r-2,r-1)}_{-1}\left(\frac{r-1}{2} \right)=c_{r-1}E^{(2r-2,2r-2)}_{0}(0) \quad \text{(Proposition \ref{Prop 4.5})}.
\end{aligned}$$
So we obtain
$$\begin{aligned}
E^{(2r-1,r)}_{-2,Q}\left(\frac{r}{2} \right)
&=\|~\|^{r}E^{(2r-2,r)}_{-2}\left(\frac{r}{2} \right)F^{(2r-1,r)}\left(\frac{r}{2} \right)\\
& \quad +\|~\|^{r-1}c_{r-1}E^{(2r-2,2r-2)}_{0}(0)G^{(2r-1,r)}_{-1}\left(\frac{r}{2} \right).
\end{aligned}$$
On the other hand, by Proposition \ref{Prop 4.1}, we have
$$\begin{aligned}
E^{(2r-1,2r-1)}_{-1,Q}\left(\frac{1}{2}\right)
&=\|~\|^r E^{(2r-2,2r-2)}_{-1}(1)\\
&\quad +\|~\|^{r-1} E^{(2r-1,2r-1)}_{0}(0)H^{(2r-1)}_{-1}\left(\frac{1}{2}\right).
\end{aligned}$$
Thus
$$\begin{aligned}
&\quad d_{2r-1,r}\left[\|~\|^r E^{(2r-2,2r-2)}_{-1}(1)+\|~\|^{r-1} E^{(2r-1,2r-1)}_{0}(0)H^{(2r-1)}_{-1}\left(\frac{1}{2} \right) \right]\\
&=\|~\|^{r}E^{(2r-2,r)}_{-2}\left(\frac{r}{2} \right)F^{(2r-1,r)}\left(\frac{r}{2} \right) +\|~\|^{r-1}c_{r-1}E^{(2r-2,2r-2)}_{0}(0)G^{(2r-1,r)}_{-1}\left(\frac{r}{2} \right).
\end{aligned}$$
By Lemma \ref{Lem 4.2}, we can extract the terms containing $\|~\|^r$ in the above identity, and we obtain
$$d_{2r-1,r}E^{(2r-2,2r-2)}_{-1}(1)=E^{(2r-2,r)}_{-2}\left(\frac{r}{2} \right)F^{(2r-1,r)}\left(\frac{r}{2} \right),$$
i.e.
$$E^{(2r-2,r)}_{-2}\left(\frac{r}{2} \right)=d_{2r-2,r}E^{(2r-2,2r-2)}_{-1}(1),$$
where
$$\begin{aligned}
d_{2r-2,r}
&=d_{2r-1,r}F^{(2r-1,r)}\left(\frac{r}{2} \right)^{-1}\\
&=d_{2r-1,r}\frac{\xi_E(2r-1)}{\xi_E(r-1)}.
\end{aligned}$$
This shows the base step $j=1$.

Now consider the induction step. Assume that $2r-j-2\geq r+1$ and the identity holds for $2r-j-1$ for some $j\geq 1$. We now show that the identity holds for $2r-j-2$. Taking the constant term along $Q$ of the identity for $2r-j-1$, we have
$$E^{(2r-1-j,r)}_{-2,Q}\left(\frac{r}{2} \right)=d_{2r-j-1,r}E^{(2r-j-1,2r-j-1)}_{-1,Q}\left(\frac{j+1}{2}\right).$$
Now we compute the two terms in the above identity.

For the term on the LHS, we have, by Proposition \ref{Prop 4.1}, that
$$\begin{aligned}
E^{(2r-1-j,r)}_{-2,Q}\left(\frac{r}{2} \right)
&=\|~\|^{2r-j-1}E^{(2r-j-2,r-1)}_{-2}\left(\frac{r+1}{2} \right)\\
&\quad +\|~\|^r E^{(2r-j-2,r)}_{-2}\left(\frac{r}{2} \right) F^{(2r-j-1,r)}\left(\frac{r}{2} \right)\\
&\quad +\|~\|^{r-j-1} E^{(2r-j-2,r-1)}_{-2}\left(\frac{r-1}{2} \right)G^{(2r-j-1,r)}\left(\frac{r}{2} \right).
\end{aligned}$$
But
$$E^{(2r-j-2,r-1)}_{-2}\left(\frac{r+1}{2} \right)=0 \quad (\text{Lemma \ref{Lem 4.4} (i)}).$$
So
$$\begin{aligned}
E^{(2r-1-j,r)}_{-2,Q}\left(\frac{r}{2} \right)
&=\|~\|^r E^{(2r-j-2,r)}_{-2}\left(\frac{r}{2} \right) F^{(2r-j-1,r)}\left(\frac{r}{2} \right)\\
&\quad +\|~\|^{r-j-1} E^{(2r-j-2,r-1)}_{-2}\left(\frac{r-1}{2} \right) G^{(2r-j-1,r)}\left(\frac{r}{2} \right).
\end{aligned}$$

On the other hand, by Proposition \ref{Prop 4.1}, we have
$$\begin{aligned}
E^{(2r-j-1,2r-j-1)}_{-1,Q}\left(\frac{j+1}{2}\right)
&=\|~\|^r E^{(2r-j-2,2r-j-2)}_{-1}\left(\frac{j+2}{2}\right)\\
&\quad +\|~\|^{r-j-1}E^{(2r-j-2,2r-j-2)}_{-1}\left(\frac{j}{2}\right)H^{(2r-j-1)}\left(\frac{j+1}{2}\right).
\end{aligned}$$

So
$$\begin{aligned}
&\quad \|~\|^r E^{(2r-j-2,r)}_{-2}\left(\frac{r}{2} \right) F^{(2r-j-1,r)}\left(\frac{r}{2} \right)\\
&\quad \quad +\|~\|^{r-j-1} E^{(2r-j-2,r-1)}_{-2}\left(\frac{r-1}{2} \right)G^{(2r-j-1,r)}\left(\frac{r}{2} \right)\\
&=d_{2r-j-1,r}\|~\|^r \left[E^{(2r-j-2,2r-j-2)}_{-1}\left(\frac{j+2}{2}\right)\right.\\
&\quad \quad \quad \quad \quad \quad \quad \quad \left.+\|~\|^{r-j-1}E^{(2r-j-2,2r-j-2)}_{-1}\left(\frac{j}{2}\right)H^{(2r-j-1)}\left(\frac{j+1}{2}\right)\right].
\end{aligned}$$
By Lemma \ref{Lem 4.2}, we can extract the terms containing $\|~\|^r$ in the above identity and obtain
$$E^{(2r-j-2,r)}_{-2}\left(\frac{r}{2} \right)F^{(2r-j-1,r)}\left(\frac{r}{2} \right)=d_{2r-j-1,r}E^{(2r-j-2,2r-j-2)}_{-1}\left(\frac{j+2}{2} \right),$$
i.e.
$$E^{(2r-j-2,r)}_{-2}\left(\frac{r}{2} \right)=d_{2r-j-2,r}E^{(2r-j-2,2r-j-2)}_{-1}\left(\frac{j+2}{2} \right),$$
where
$$ \begin{aligned}
d_{2r-j-2,r}
&=d_{2r-j-1,r}F^{(2r-j-1,r)}\left(\frac{r}{2} \right)^{-1}\\
&=d_{2r-j-1,r}\frac{\xi_E(2r-j-1)}{\xi_E(r-j-1)}.
\end{aligned}$$
This completes the proof.
\end{proof}

\subsection{Spherical second term identity for 2nd term range}
In this section, we will derive the spherical second term identity for 2nd term range. First we consider the case $n=2r-1$, which can be regarded as the first step of the induction.
\begin{prop}\label{Prop 4.7}
[Spherical second term identity for $n=2r-1$]
Suppose $r\geq 2$. Then
$$\begin{aligned}
&\quad E^{(2r-1,r)}_{-1}(\frac{r}{2})F^{(2r,r)}_0\left(\frac{r}{2} \right)+E^{(2r-1,r-1)}_{0}\left(\frac{r-1}{2} \right)G^{(2r,r)}_{-1}\left(\frac{r}{2} \right)\\
& =2 c_r E^{(2r-1,2r-1)}_0\left(\frac{1}{2} \right)+\gamma_0 E^{(2r-1,2r-1)}_{-1}\left(\frac{1}{2} \right),
\end{aligned}$$
where $\gamma_0$ is some constant which depends only on $r$.
\end{prop}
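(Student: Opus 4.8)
The plan is to follow the inductive scheme outlined in Section 4.4: start from the spherical first term identity on the boundary (Proposition \ref{Prop 4.5}), namely $E^{(2r,r)}_{-1}(\frac{r}{2}) = c_r E^{(2r,2r)}_0(0)$, and take the constant term along $Q$ of both sides. Since the constant term map $E\mapsto E_Q$ commutes with taking Laurent coefficients in $s$, this yields $E^{(2r,r)}_{-1,Q}(\frac{r}{2}) = c_r E^{(2r,2r)}_{0,Q}(0)$. The next step is to expand each side explicitly using Proposition \ref{Prop 4.1}. On the left I need the coefficient of $(s-\frac r2)^{-1}$ in the constant term formula for $E^{(2r,r)}_Q((a,g),s)$; on the right I need the coefficient of $(s-0)^0$ in the constant term formula for $E^{(2r,2r)}_Q((a,g),s)$. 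Because $E^{(2r,r)}$ has a double pole at $s=\frac r2$ and $E^{(2r,2r)}$ has a simple pole at $s=0$, both expansions involve products of Laurent expansions of the $\xi$-factors $F,G,H$ and of the lower-rank Eisenstein series $E^{(2r-1,r)}$, $E^{(2r-1,r-1)}$, $E^{(2r-1,2r-1)}$, and I will carefully track which exponents $\|~\|^{\bullet}$ each term carries.

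After expanding, I would collect the terms according to the power of $\|~\|$ that appears. The three exponents on the left are $s+n-\frac r2 = s+r$ (giving $\|~\|^{3r/2}$ at $s=\frac r2$ — wait, more precisely one collects by the actual exponent), $\|~\|^{r}$, and $\|~\|^{-s+n-\frac r2}$; on the right one has $\|~\|^{s+n/2}$ and $\|~\|^{-s+n/2}$. At $s=\frac r2$ on the left and $s=0$ on the right, the exponent $\|~\|^r$ occurs on both sides (from the middle term on the left and from one of the two terms on the right, since $n=2r-1$, $n/2 = r-\frac12$ and the relevant shift lands on $\|~\|^r$), and it is exactly this $\|~\|^r$-isotypic component that I want. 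So the key move is to apply Lemma \ref{Lem 4.2}: the other exponents ($\|~\|^{r-1}$, $\|~\|^{2r-1}$, etc.) are distinct reals, so equating the $\|~\|^r$-coefficients is legitimate. This produces an identity among $E^{(2r-1,r)}_{-1}(\frac r2)$, $E^{(2r-1,r)}_{-2}(\frac r2)$, $E^{(2r-1,r-1)}_{-1}(\frac{r-1}2)$, $E^{(2r-1,r-1)}_0(\frac{r-1}2)$, $E^{(2r-1,2r-1)}_{-1}(\frac12)$, and $E^{(2r-1,2r-1)}_0(\frac12)$, each multiplied by an explicit nonzero constant built from $\xi_F,\xi_E$ and $c_r$ (this is the displayed six-term identity in Section 4.4).

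Finally, I would eliminate the unwanted terms using the already-established first term identities. Proposition \ref{Prop 4.6} gives $E^{(2r-1,r)}_{-2}(\frac r2) = d_{2r-1,r} E^{(2r-1,2r-1)}_{-1}(\frac12)$, and Proposition \ref{Prop 4.3} (together with its boundary analog, really Proposition \ref{Prop 4.6}'s proof) gives $E^{(2r-1,r-1)}_{-1}(\frac{r-1}2) = c_{2r-1,r-1} E^{(2r-1,2r-1)}_{-1}(\frac12)$. Substituting these, the terms $E^{(2r-1,r)}_{-2}(\frac r2)$ and $E^{(2r-1,r-1)}_{-1}(\frac{r-1}2)$ get absorbed into a multiple of $E^{(2r-1,2r-1)}_{-1}(\frac12)$, i.e. into the $\gamma_0$-term. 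It remains to check that the coefficient of $E^{(2r-1,2r-1)}_0(\frac12)$ comes out to be exactly $2c_r$; this is a bookkeeping computation using $c_r = \frac12\prod_{i=1}^r \frac{\xi_E(i)}{\xi_E(r+i)}$ and the known constant-term $\xi$-factors, and the factor $2$ should emerge from the relation $\frac{E^{(2r,r)}_{-1}}{E^{(2r,2r)}_0}$ versus $\frac{E^{(2r,r)}_{-1}}{E^{(2r,2r)}_{-1}}$ at the boundary and the way $H^{(2r)}$ behaves — essentially it is forced by consistency with Theorem \ref{Thm 3.5}(ii). The main obstacle is precisely this last constant-chasing: keeping the double-pole Laurent expansions of the products $E^{(2r-1,\bullet)}(s)\cdot F(s)$, $E^{(2r-1,\bullet)}(s\mp\frac12)\cdot G(s)$, and $E^{(2r,2r)}(s\mp\frac12)\cdot H(s)$ organized so that no cross term is dropped, and confirming the clean coefficient $2c_r$ rather than some other multiple; the structural steps (constant term, Lemma \ref{Lem 4.2}, substitution of first term identities) are routine given the machinery already in place.
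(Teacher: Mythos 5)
Your proposal follows the paper's proof essentially verbatim: take the constant term along $Q$ of the boundary first term identity $E^{(2r,r)}_{-1}(\tfrac{r}{2})=c_r E^{(2r,2r)}_0(0)$, expand both sides with Proposition \ref{Prop 4.1}, extract the $\|~\|^r$-component via Lemma \ref{Lem 4.2}, and absorb $E^{(2r-1,r)}_{-2}(\tfrac{r}{2})$ and $E^{(2r-1,r-1)}_{-1}(\tfrac{r-1}{2})$ into the $\gamma_0$-term using Propositions \ref{Prop 4.6} and \ref{Prop 4.3}. The one ingredient you leave implicit is that the $H^{(2r)}$-term on the right produces $E^{(2r-1,2r-1)}_0(-\tfrac12)$ and $E^{(2r-1,2r-1)}_1(-\tfrac12)$, which must be converted to Laurent coefficients at $+\tfrac12$ via the functional equation $E^{(2r-1,2r-1)}(s)=\beta_{2r-1}(s)E^{(2r-1,2r-1)}(-s)$; the coefficient $2c_r$ then comes out of the explicit identity $1+\beta_{2r-1,1}(-\tfrac12)H^{(2r)}_{-1}(0)=2$.
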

\begin{proof}
This is similar to the proof of \cite{GT} Proposition 4.7.

We start with the identity
$$E^{(2r,r)}_{-1,Q}\left(\frac{r}{2} \right)=c_r E^{(2r,2r)}_{0,Q}(0),$$
which is obtained by taking the constant term along $Q$ of the spherical first term identity on the boundary (Proposition \ref{Prop 4.5}).

By Proposition \ref{Prop 4.1}, we have
$$\begin{aligned}
E^{(2r,r)}_Q(s)
&=\|~\|^{s+\frac{3r}{2}}E^{(2r-1,r-1)}\left(s+\frac{1}{2} \right)\\
&\quad +\|~\|^{r}E^{(2r-1,r)}(s)F^{(2r,r)}(s)\\
&\quad +\|~\|^{-s+\frac{3r}{2}}E^{(2r-1,r-1)}\left(s-\frac{1}{2} \right)G^{(2r,r)}(s).
\end{aligned}$$
Note that
$$E^{(2r-1,r-1)}_{-1}\left(\frac{r+1}{2}\right)=0 \quad (\text{Lemma \ref{Lem 4.4} (iii)}).$$
So
$$\begin{aligned}
E^{(2r,r)}_{-1,Q}\left(\frac{r}{2} \right)
&=\|~\|^{r}E^{(2r-1,r)}_{-2}\left(\frac{r}{2} \right) F^{(2r,r)}_1\left(\frac{r}{2} \right)\\
&\quad +\|~\|^{r}E^{(2r-1,r)}_{-1}\left(\frac{r}{2} \right)F^{(2r,r)}_0\left(\frac{r}{2}\right)\\
&\quad +\|~\|^{r}E^{(2r-1,r-1)}_{-1}\left(\frac{r-1}{2}\right)G^{(2r,r)}_0\left(\frac{r}{2}\right)\\
&\quad +\|~\|^{r}E^{(2r-1,r-1)}_{0}\left(\frac{r-1}{2}\right)G^{(2r,r)}_{-1}\left(\frac{r}{2}\right)\\
&\quad -\|~\|^{r}\log \|~\| E^{(2r-1,r-1)}_{-1}\left(\frac{r-1}{2}\right)G^{(2r,r)}_{-1}\left(\frac{r}{2} \right),
\end{aligned}$$
where we have used that facts that $E^{(2r-1,r)}(s)$ has a double pole at $s=r/2$, $E^{(2r-1,r-1)}(s)$ has a simple pole at $s=(r-1)/2$, $F^{(2r,r)}(s)$ is holomorphic at $s=r/2$, and $G^{(2r,r)}(s)$ has a simple pole at $s=r/2$.

On the other hand, by Proposition \ref{Prop 4.1}, we have
$$\begin{aligned}
E^{(2r,2r)}_Q(s)
&=\|~\|^{s+r}E^{(2r-1,2r-1)}\left(s+\frac{1}{2}\right)\\
&\quad +\|~\|^{-s+r}E^{(2r-1,2r-1)}\left(s-\frac{1}{2}\right)H^{(2r)}(s).
\end{aligned}$$
So
$$\begin{aligned}
E^{(2r,2r)}_{0,Q}(0)
&=\|~\|^r E^{(2r-1,2r-1)}_0\left(\frac{1}{2} \right)+\|~\|^{r}\log \|~\| E^{(2r-1,2r-1)}_{-1}\left(\frac{1}{2} \right)\\
&\quad +\|~\|^{r} E^{(2r-1,2r-1)}_{0}\left(-\frac{1}{2} \right)H^{(2r)}_0(0)\\
&\quad +\|~\|^{r} E^{(2r-1,2r-1)}_{1}\left(-\frac{1}{2} \right)H^{(2r)}_{-1}(0)\\
&\quad -\|~\|^{r}\log \|~\| E^{(2r-1,2r-1)}_{0}\left(-\frac{1}{2} \right)H^{(2r)}_{-1}(0),
\end{aligned}$$
where we have used that facts that $E^{(2r-1,2r-1)}(s)$ has a simple pole at $s=1/2$ and is holomorphic at $s=-1/2$ (which we will show immediately), and $H^{(2r)}(s)$ has a simple pole at $s=0$.

Recall the functional equation
$$E^{(2r-1,2r-1)}(s)=\beta_{2r-1}(s)E^{(2r-1,2r-1)}(-s),$$
where
$$\beta_{n}(s)=\prod_{i=1}^n \frac{\xi_F(2s-n+i,\eta^{i-1})}{\xi_F(2s+n+1-i,\eta^{i-1})}.$$
Note that $\beta_{2r-1}(s)$ has a simple zero at $s=-1/2$, and $E^{(2r-1,2r-1)}(s)$ has a simple pole at $s=1/2$, so $E^{(2r-1,2r-1)}(s)$ is holomorphic at $s=-1/2$. Hence
$$\begin{aligned}
&E^{(2r-1,2r-1)}_0\left(-\frac{1}{2} \right)=-\beta_{2r-1,1}\left(-\frac{1}{2} \right)E^{(2r-1,2r-1)}_{-1}\left(\frac{1}{2} \right),\\
&E^{(2r-1,2r-1)}_1\left(-\frac{1}{2} \right)=\beta_{2r-1,1}\left(-\frac{1}{2} \right)E^{(2r-1,2r-1)}_{0}\left(\frac{1}{2} \right)-\beta_{2r-1,2}\left(-\frac{1}{2} \right)E^{(2r-1,2r-1)}_{-1}\left(\frac{1}{2} \right),
\end{aligned}$$
where we write the Laurent expansion of $\beta_n(s)$ at $s_0$ as
$$\beta_n(s)=\sum_{d\gg -\infty}(s-s_0)^d \beta_{n,d}(s_0).$$
So
$$\begin{aligned}
&E^{(2r,2r)}_{0,Q}(0)=\|~\|^r E^{(2r-1,2r-1)}_0\left(\frac{1}{2} \right)+\|~\|^{r}\log \|~\| E^{(2r-1,2r-1)}_{-1}\left(\frac{1}{2} \right)\\
&-\|~\|^{r}\beta_{2r-1,1}\left(-\frac{1}{2} \right)E^{(2r-1,2r-1)}_{-1}\left(\frac{1}{2} \right)H^{(2r)}_0(0)\\
& +\|~\|^{r} \left[\beta_{2r-1,1}\left(-\frac{1}{2} \right)E^{(2r-1,2r-1)}_{0}\left(\frac{1}{2} \right)-\beta_{2r-1,2}\left(-\frac{1}{2} \right)E^{(2r-1,2r-1)}_{-1}\left(\frac{1}{2} \right) \right]H^{(2r)}_{-1}(0)\\
&-\|~\|^{r}\log \|~\| E^{(2r-1,2r-1)}_{0}\left(-\frac{1}{2} \right)H^{(2r)}_{-1}(0).
\end{aligned}$$

By Lemma \ref{Lem 4.2}, we can extract the terms containing only $\|~\|^r$ in the identity
$$E^{(2r,r)}_{-1,Q}\left(\frac{r}{2} \right)=c_r E^{(2r,2r)}_{0,Q}(0).$$
Then we obtain
$$\begin{aligned}
&\quad E^{(2r-1,r)}_{-2}\left(\frac{r}{2} \right)F^{(2r,r)}_1\left(\frac{r}{2} \right)+E^{(2r-1,r)}_{-1}\left(\frac{r}{2} \right)F^{(2r,r)}_0\left(\frac{r}{2} \right)\\
&\quad +E^{(2r-1,r-1)}_{-1}\left(\frac{r-1}{2} \right)G^{(2r,r)}_0\left(\frac{r}{2} \right)+E^{(2r-1,r-1)}_{0}\left(\frac{r-1}{2} \right)G^{(2r,r)}_{-1}\left(\frac{r}{2} \right)\\
& =c_r \left\{ E^{(2r-1,2r-1)}_0\left(\frac{1}{2}\right)-\beta_{2r-1,1}\left(-\frac{1}{2} \right)E^{(2r-1,2r-1)}_{-1}\left(\frac{1}{2} \right)H^{(2r)}_0(0)\right.\\
&\quad \quad + \left. \left[\beta_{2r-1,1}\left(-\frac{1}{2} \right)E^{(2r-1,2r-1)}_{0}\left(\frac{1}{2} \right)-\beta_{2r-1,2}\left(-\frac{1}{2} \right) E^{(2r-1,2r-1)}_{-1}\left(\frac{1}{2} \right) \right]H^{(2r)}_{-1}(0)\right\}\\
&=c_r\left[1+\beta_{2r-1,1}\left(-\frac{1}{2} \right)H^{(2r)}_{-1}(0) \right]E^{(2r-1,2r-1)}_0\left(\frac{1}{2} \right)\\
&\quad +c_r\left[-\beta_{2r-1,1}\left(-\frac{1}{2} \right)H^{(2r)}_0(0)-\beta_{2r-1,2}\left(-\frac{1}{2} \right) H^{(2r)}_{-1}(0) \right]E^{(2r-1,2r-1)}_{-1}\left(\frac{1}{2} \right).
\end{aligned}$$

Using the first term identities
$$\begin{aligned}
&E^{(2r-1,r-1)}_{-1}\left(\frac{r-1}{2} \right)=c_{2r-1,r-1}E^{(2r-1,2r-1)}_{-1}\left(\frac{1}{2} \right),\\
&E^{(2r-1,r)}_{-2}\left(\frac{r}{2} \right)=d_{2r-1,r}E^{(2r-1,2r-1)}_{-1}\left(\frac{1}{2} \right),
\end{aligned}$$
we can write the above identity as
$$\begin{aligned}
&\quad E^{(2r-1,r)}_{-1}\left(\frac{r}{2} \right)F^{(2r,r)}_0\left(\frac{r}{2} \right)+E^{(2r-1,r-1)}_{0}\left(\frac{r-1}{2} \right)G^{(2r,r)}_{-1}\left(\frac{r}{2} \right)\\
&=c_r\left[1+\beta_{2r-1,1}\left(-\frac{1}{2} \right)H^{(2r)}_{-1}(0) \right]E^{(2r-1,2r-1)}_0\left(\frac{1}{2} \right)\\
&\quad +c_r \left[-\beta_{2r-1,1}\left(-\frac{1}{2} \right)H^{(2r)}_0(0)-\beta_{2r-1,2}\left(-\frac{1}{2} \right) H^{(2r)}_{-1}(0)\right.\\
&\quad \quad \left. -d_{2r-1,r}F^{(2r,r)}_1\left(\frac{r}{2} \right)-c_{2r-1,r-1}G^{(2r,r)}_0\left(\frac{r}{2}\right)\right] E^{(2r-1,2r-1)}_{-1}\left(\frac{1}{2} \right),
\end{aligned}$$
i.e.
$$\begin{aligned}
&\quad E^{(2r-1,r)}_{-1}\left(\frac{r}{2} \right)F^{(2r,r)}_0\left(\frac{r}{2} \right)+E^{(2r-1,r-1)}_{0}\left(\frac{r-1}{2} \right)G^{(2r,r)}_{-1}\left(\frac{r}{2} \right)\\
&=c_r\left[1+\beta_{2r-1,1}\left(-\frac{1}{2} \right)H^{(2r)}_{-1}(0) \right]E^{(2r-1,2r-1)}_0\left(\frac{1}{2} \right)+\gamma_0 E^{(2r-1,2r-1)}_{-1}\left(\frac{1}{2} \right),
\end{aligned}$$
where $\gamma_0$ is a constant.

Finally, we claim that
$$1+\beta_{2r-1,1}\left(-\frac{1}{2} \right)H^{(2r)}_{-1}(0)=2.$$

First, we have
$$H^{(2r)}_{-1}(0)=\frac{1}{2}\frac{\xi_E(0)}{\xi_F(2r-1,\eta)\xi_F(2r)}.$$
Secondly, since
$$\beta_{2r-1}(s)=\frac{\xi_F(2s)}{\xi_F(2s+1)} \prod_{i=1}^{2r-2}\frac{\xi_F(2s-2r+1+i,\eta^{i-1})}{\xi_F(2s+2r-i,\eta^{i-1})},$$
we have
$$\begin{aligned}
\beta_{2r-1,1}\left(-\frac{1}{2} \right)
&= \frac{\xi_F(-1)}{\Res_{s=-1/2}\xi_F(2s+1)} \prod_{i=1}^{2r-2}\frac{\xi_F(-2r+i,\eta^{i-1})}{\xi_F(2r-1-i,\eta^{i-1})}\\
&=2 \frac{\xi_F(-1)}{\xi_F(0)}\frac{\xi_F(2r)\xi_F(2r-1,\eta)}{\xi_F(2)\xi_F(1,\eta)}\\
&=2 \frac{\xi_F(2r)\xi_F(2r-1,\eta)}{\xi_E(0)},
\end{aligned}$$
where we have used the facts that $\xi_F(-1)=\xi_F(2)$, $\xi_F(0)=-\xi_F(1)$, and $\xi_E(1)=-\xi_E(0)$ by the functional equation $\xi(s)=\xi(1-s)$.
Thus
$$1+\beta_{2r-1,1}\left(-\frac{1}{2} \right)H^{(2r)}_{-1}(0)=2.$$
\end{proof}

Now we can show the general second term identity for $r+1\leq n \leq 2r-1$ by induction on $j=2r-1-n$.
\begin{thm}\label{Thm 4.8}
[Spherical second term identity]
Suppose $r\geq 2$. For $0\leq j \leq r-2$, the following identity holds:
$$\begin{aligned}
&E^{(2r-j-1,r)}_{-1}\left(\frac{r}{2}\right)\prod_{i=0}^{j}F^{(2r-i,r)}\left(\frac{r}{2}\right)\\
&\quad \quad +E^{(2r-j-1,r-j-1)}_{0}\left(\frac{r-j-1}{2}\right)\prod_{i=0}^{j}G^{(2r-i,r-i)}\left(\frac{r-i}{2}\right)\\
&=2c_r E^{(2r-j-1,2r-j-1)}_0\left(\frac{j+1}{2}\right)+\gamma_j E^{(2r-j-1,2r-j-1)}_{-1}\left(\frac{j+1}{2}\right),
\end{aligned}$$
where $\gamma_j$ is a constant, and we write $G^{(2r,r)}(\frac{r}{2})$ for $G^{(2r,r)}_{-1}(\frac{r}{2})$.

Note that for $0\leq i \leq j$, $F^{(2r-i,r)}(s)$ is holomorphic at $s=r/2$, and
$$ F^{(2r-i,r)}\left(\frac{r}{2}\right)=\frac{\xi_E(r-i)}{\xi_E(2r-i)}.$$
For $1\leq i \leq j$, $G^{(2r-i,r-i)}(s)$ is holomorphic at $s=(r-i)/2$, and
$$ G^{(2r-i,r-i)}\left(\frac{r-i}{2}\right)=\frac{\xi_E(r-i)\xi_E(-i)}{\xi_F(2r-2i-1,\eta)\xi_F(2r-2i)\xi_E(2r-i)}.$$
\end{thm}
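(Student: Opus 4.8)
The plan is to prove the identity by induction on $j$, the base case $j=0$ being exactly Proposition~\ref{Prop 4.7}. Assuming the identity for $j-1$ --- that is, for $n=2r-j$, which is legitimate since $1\leq j\leq r-2$ forces $r+1\leq 2r-j\leq 2r-1$ --- I would apply the constant-term map $(-)_Q$ along $Q\subset U(2r-j,2r-j)$ to that identity and expand every resulting constant term via Proposition~\ref{Prop 4.1}. This yields a relation among automorphic forms on $\BA_E^{\times}\times U(2r-j-1,2r-j-1)(\BA)$, each summand of which is $\|~\|^{a}$ or $\|~\|^{a}\log\|~\|$ times an automorphic form on $U(2r-j-1,2r-j-1)(\BA)$. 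Lemma~\ref{Lem 4.2} then lets me extract the coefficient of $\|~\|^{r}$ carrying no $\log\|~\|$, and after absorbing the error terms into a new constant $\gamma_j$ this will be the identity for $n=2r-j-1$.

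The heart of the computation is bookkeeping which summand of each expanded constant term carries the exponent $r$ at the relevant value of $s$. For $E^{(2r-j,r)}_Q$ near $s=r/2$ the three exponents produced by Proposition~\ref{Prop 4.1} are $2r-j$, $r$ and $r-j$, so only the middle ($F$-)summand contributes, with $\|~\|^{r}$-coefficient $E^{(2r-j-1,r)}_{-2}(r/2)F^{(2r-j,r)}_1(r/2)+E^{(2r-j-1,r)}_{-1}(r/2)F^{(2r-j,r)}_0(r/2)$; here $E^{(2r-j-1,r)}$ is in the second term range (since $r+1\leq 2r-j-1$) and has a double pole at $r/2$, while $F^{(2r-j,r)}$ is holomorphic there. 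For $E^{(2r-j,r-j)}_Q$ near $s=(r-j)/2$ the exponents are $2r-j$, $r-j$ and $r$, so now the third ($G$-)summand carries $\|~\|^{r}$, with coefficient $E^{(2r-j-1,r-j-1)}_{-1}((r-j-1)/2)G^{(2r-j,r-j)}_1((r-j)/2)+E^{(2r-j-1,r-j-1)}_0((r-j-1)/2)G^{(2r-j,r-j)}_0((r-j)/2)$; here $E^{(2r-j-1,r-j-1)}$ is in the first term range, so by Lemma~\ref{Lem 4.4}(ii) has at most a simple pole at $(r-j-1)/2$, and $G^{(2r-j,r-j)}$ is holomorphic at $(r-j)/2$ because $j\geq1$. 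Finally $E^{(2r-j,2r-j)}_Q$ near $s=j/2$ has only the exponents $r$ and $r-j$, so its $\|~\|^{r}$-contributions come entirely from the first ($\|~\|^{s+(2r-j)/2}$) summand and are built from $E^{(2r-j-1,2r-j-1)}_0((j+1)/2)$ (plus a $\|~\|^{r}\log\|~\|$ term, irrelevant to the identity we extract) and from $E^{(2r-j-1,2r-j-1)}_{-1}((j+1)/2)$. The crucial point is that, since $j\geq1$, the exponent $r-j$ is genuinely distinct from $r$; consequently the ``reflected'' summands that in the base case forced the functional-equation identity $1+\beta_{2r-1,1}(-1/2)H^{(2r)}_{-1}(0)=2$ simply do not enter the $\|~\|^{r}$-relation here, and this is exactly what propagates the leading coefficient $2c_r$ unchanged through the induction.

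Assembling the pieces: after discarding the $\log\|~\|$ part via Lemma~\ref{Lem 4.2}, the surviving left-hand terms are $E^{(2r-j-1,r)}_{-1}(r/2)F^{(2r-j,r)}_0(r/2)\prod_{i=0}^{j-1}F^{(2r-i,r)}(r/2)$ and $E^{(2r-j-1,r-j-1)}_0((r-j-1)/2)G^{(2r-j,r-j)}_0((r-j)/2)\prod_{i=0}^{j-1}G^{(2r-i,r-i)}((r-i)/2)$, which, since $F^{(2r-j,r)}$ and $G^{(2r-j,r-j)}$ are holomorphic at the relevant points, fold into the products $\prod_{i=0}^{j}$ of the target identity; on the right, $2c_r E^{(2r-j,2r-j)}_0(j/2)$ contributes $2c_r E^{(2r-j-1,2r-j-1)}_0((j+1)/2)$. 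Every remaining term is a constant multiple of $E^{(2r-j-1,2r-j-1)}_{-1}((j+1)/2)$: the term with $E^{(2r-j-1,r)}_{-2}(r/2)$ by the first term identity of Proposition~\ref{Prop 4.6}; the term with $E^{(2r-j-1,r-j-1)}_{-1}((r-j-1)/2)$ by Proposition~\ref{Prop 4.3} applied to the pair $(2r-j-1,r-j-1)$, using $2r-j-1\geq 2(r-j-1)+1$ and $r-j-1\geq1$; the piece $\gamma_{j-1}E^{(2r-j-1,2r-j-1)}_{-1}((j+1)/2)$; and the residual contribution of the $\|~\|^{r}\log\|~\|$ bookkeeping. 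Collecting all of these into $\gamma_j$ gives the identity for $j$. The closing holomorphy assertions and the explicit values $F^{(2r-i,r)}(r/2)=\xi_E(r-i)/\xi_E(2r-i)$ and $G^{(2r-i,r-i)}((r-i)/2)=\xi_E(r-i)\xi_E(-i)/\bigl(\xi_F(2r-2i-1,\eta)\xi_F(2r-2i)\xi_E(2r-i)\bigr)$ then follow by direct substitution into the formulas of Proposition~\ref{Prop 4.1}, using $\xi_E=\xi_F\,\xi_F(\cdot,\eta)$ (which turns $\xi_F(2s+r'-1)/\xi_E(2s+r'-1)$ into $1/\xi_F(2r-2i-1,\eta)$) and the functional equation $\xi(s)=\xi(1-s)$. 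I expect the main obstacle to be precisely this Laurent-expansion bookkeeping --- keeping exact track of the pole orders of every $E^{(\cdot,\cdot)}$, of the holomorphy or simple poles of the $F$, $G$, $H$ factors, and of which products of which Laurent coefficients at which points are being formed --- so that no $\|~\|^{r}$-contribution is dropped or double-counted and the constant in front of $E^{(2r-j-1,2r-j-1)}_0((j+1)/2)$ stays exactly $2c_r$.
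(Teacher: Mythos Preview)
Your proposal is correct and follows essentially the same inductive approach as the paper: take the constant term along $Q$ of the identity for $j-1$, expand each piece via Proposition~\ref{Prop 4.1}, extract the $\|~\|^r$ (non-log) coefficient via Lemma~\ref{Lem 4.2}, and absorb the $E^{(2r-j-1,r)}_{-2}$ and $E^{(2r-j-1,r-j-1)}_{-1}$ contributions into $\gamma_j$ using Propositions~\ref{Prop 4.6} and~\ref{Prop 4.3}. One small remark: there is no ``residual contribution of the $\|~\|^{r}\log\|~\|$ bookkeeping'' to the $\|~\|^r$-equation---Lemma~\ref{Lem 4.2} separates those cleanly---so $\gamma_j$ is built only from the three sources you list before that phrase.
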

\begin{proof}
The proof is similar to that of \cite{GT} Theorem 4.8, and is done by induction on $j$.

The case $j=0$ is the spherical second term identity for $n=2r-1$ which we have already shown.

Next we consider the case $j=1$ (so $r\geq 3$). Looking at the constant term along $Q$ of the identity for $j=0$, we obtain
$$\begin{aligned}
&E^{(2r-1,r)}_{-1,Q}\left(\frac{r}{2} \right)F^{(2r,r)}_0\left(\frac{r}{2} \right)+E^{(2r-1,r-1)}_{0,Q}\left(\frac{r-1}{2} \right)G^{(2r,r)}_{-1}\left(\frac{r}{2} \right)\\
&=2c_r E^{(2r-1,2r-1)}_{0,Q}\left(\frac{1}{2} \right)+\gamma_0 E^{(2r-1,2r-1)}_{-1,Q}\left(\frac{1}{2} \right). \quad \quad \quad \quad \quad \quad (1)
\end{aligned}$$

Now we compute the four terms in $(1)$ using Proposition \ref{Prop 4.1}.

For the term $E^{(2r-1,r)}_{-1,Q}(\frac{r}{2})$, we have
$$\begin{aligned}
&E^{(2r-1,r)}_{-1,Q}\left(\frac{r}{2}\right)=\|~\|^{2r-1}E^{(2r-2,r-1)}_{-1}\left(\frac{r+1}{2} \right)\\
& \quad \quad +\|~\|^r \left[ E^{(2r-2,r)}_{-2}\left(\frac{r}{2} \right)F^{(2r-1,r)}_1\left(\frac{r}{2} \right)+E^{(2r-2,r)}_{-1}\left(\frac{r}{2} \right)F^{(2r-1,r)}_0\left(\frac{r}{2} \right) \right]\\
& \quad \quad +\|~\|^{r-1}E^{(2r-2,r-1)}_{-1}\left(\frac{r-1}{2} \right)G^{(2r-1,r)}_0\left(\frac{r}{2} \right)+\|~\|^{r-1}E^{(2r-2,r-1)}_{0}\left(\frac{r-1}{2} \right)G^{(2r-1,r)}_{-1}\left(\frac{r}{2} \right)\\
& \quad \quad -\|~\|^{r-1}\log \|~\| E^{(2r-2,r-1)}_{-1}\left(\frac{r-1}{2} \right)G^{(2r-1,r)}_{-1}\left(\frac{r}{2} \right),
\end{aligned}$$
where we have used the facts that $E^{(2r-2,r)}(s)$ has a double pole at $s=r/2$, $E^{(2r-2,r-1)}(s)$ has a simple pole at $s=(r-1)/2$, $F^{(2r-1,r)}(s)$ is holomorphic at $s=r/2$, and $G^{(2r-1,r)}(s)$ has a simple pole at $s=r/2$.

For the term $E^{(2r-1,r-1)}_{0,Q}(\frac{r-1}{2})$, we have
$$\begin{aligned}
&E^{(2r-1,r-1)}_{0,Q}\left(\frac{r-1}{2} \right)=\|~\|^{2r-1}E^{(2r-2,r-2)}_0\left(\frac{r}{2} \right)\\
&\quad \quad \quad +\|~\|^{r-1}E^{(2r-2,r-1)}_0\left(\frac{r-1}{2} \right)F^{(2r-1,r-1)}\left(\frac{r-1}{2} \right)\\
&\quad \quad \quad +\|~\|^r E^{(2r-2,r-2)}_0\left(\frac{r-2}{2} \right)G^{(2r-1,r-1)}\left(\frac{r-1}{2} \right)\\
&\quad \quad \quad -\|~\|^r \log \|~\| E^{(2r-2,r-2)}_{-1}\left(\frac{r-2}{2} \right)G^{(2r-1,r-1)}\left(\frac{r-1}{2}\right),
\end{aligned}$$
where we have used the facts that $F^{(2r-1,r-1)}(s)$ is holomorphic at $s=(r-1)/2$, $E^{(2r-2,r-2)}(s)$ has a simple pole at $s=(r-2)/2$, and $G^{(2r-1,r-1)}(s)$ is holomorphic at $s=(r-1)/2$.

For the term $E^{(2r-1,2r-1)}_{0,Q}(\frac{1}{2})$, we have
$$\begin{aligned}
E^{(2r-1,2r-1)}_{0,Q}\left(\frac{1}{2} \right)
&=\|~\|^r E^{(2r-2,2r-2)}_0(1)+\|~\|^{r-1}E^{(2r-2,2r-2)}(0)H^{(2r-1)}_0\left(\frac{1}{2} \right)\\
&\quad -\|~\|^{r-1} \log \|~\| E^{(2r-2,2r-2)}(0)H^{(2r-1)}_{-1}\left(\frac{1}{2} \right),
\end{aligned}$$
where we used the facts that $E^{(2r-2,2r-2)}(s)$ is holomorphic at $s=0$, and $H^{(2r-1)}(s)$ has a simple pole at $s=1/2$.

For the term $E^{(2r-1,2r-1)}_{-1,Q}(\frac{1}{2})$, we have
$$\begin{aligned}
E^{(2r-1,2r-1)}_{-1,Q}\left(\frac{1}{2} \right)
&=\|~\|^r E^{(2r-2,2r-2)}_{-1}(1)\\
&\quad +\|~\|^{r-1}E^{(2r-2,2r-2)}(0)H^{(2r-1)}_{-1}\left(\frac{1}{2} \right),
\end{aligned}$$
where we used the facts that $E^{(2r-2,2r-2)}(s)$ has a simple pole at $s=1$, and $H^{(2r-1)}(s)$ has a simple pole at $s=1/2$.

We put the above expressions into $(1)$, and extract the terms containing only $\|~\|^r$ by Lemma \ref{Lem 4.2}. Then we obtain
$$\begin{aligned}
&E^{(2r-2,r)}_{-1}\left(\frac{r}{2} \right)F^{(2r-1,r)}_0\left(\frac{r}{2} \right)F^{(2r,r)}_0\left(\frac{r}{2} \right)+E^{(2r-2,r)}_{-2}\left(\frac{r}{2} \right)F^{(2r-1,r)}_1\left(\frac{r}{2} \right)F^{(2r,r)}_0\left(\frac{r}{2} \right)\\
&\quad +E^{(2r-2,r-2)}_0\left(\frac{r-2}{2} \right)G^{(2r-1,r-1)}\left(\frac{r-1}{2} \right)G^{(2r,r)}_{-1}\left(\frac{r}{2} \right)\\
&=2c_r E^{(2r-2,2r-2)}_0(1)+ \gamma_0 E^{(2r-2,2r-2)}_{-1}(1).
\end{aligned}$$
Note that by Proposition \ref{Prop 4.6} (spherical first term identity for 2nd term range), we have
$$E^{(2r-2,r)}_{-2}\left(\frac{r}{2} \right)=d_{2r-2,r}E^{(2r-2,2r-2)}_{-1}(1).$$
So
$$\begin{aligned}
&E^{(2r-2,r)}_{-1}\left(\frac{r}{2} \right)F^{(2r-1,r)}_0\left(\frac{r}{2} \right)F^{(2r,r)}_0\left(\frac{r}{2} \right)\\
&\quad \quad +E^{(2r-2,r-2)}_0\left(\frac{r-2}{2} \right)G^{(2r-1,r-1)}\left(\frac{r-1}{2} \right)G^{(2r,r)}_{-1}\left(\frac{r}{2} \right)\\
&=2c_r E^{(2r-2,2r-2)}_0(1)+ \gamma_1 E^{(2r-2,2r-2)}_{-1}(1),
\end{aligned}$$
where
$$\gamma_1=\gamma_0+d_{2r-2,r}F^{(2r-1,r)}_1\left(\frac{r}{2} \right)F^{(2r,r)}_0\left(\frac{r}{2} \right).$$
This shows that the identity holds for $j=1$.

Now we consider the general case. Assume the identity holds for a given $j\geq 1$. Looking at the constant term along $Q$ of the identity for $j$, we obtain
$$\begin{aligned}
&E^{(2r-j-1,r)}_{-1,Q}\left(\frac{r}{2} \right)\prod_{i=0}^{j}F^{(2r-i,r)}_0\left(\frac{r}{2} \right)+E^{(2r-j-1,r-j-1)}_{0,Q}\left(\frac{r-j-1}{2} \right)
\prod_{i=0}^{j}G^{(2r-i,r-i)}\left(\frac{r-i}{2} \right)\\
&=2c_r E^{(2r-j-1,2r-j-1)}_{0,Q}\left(\frac{j+1}{2} \right)+\gamma_j E^{(2r-j-1,2r-j-1)}_{-1,Q}\left(\frac{j+1}{2} \right). \quad \quad \quad \quad \quad (2)
\end{aligned}$$

We compute the four terms in $(2)$ by using Proposition \ref{Prop 4.1}.

We have
$$\begin{aligned}
&\quad E^{(2r-j-1,r)}_{-1,Q}\left(\frac{r}{2} \right)\\
&=\|~\|^{2r-j-1}E^{(2r-j-2,r-1)}_{-1}\left(\frac{r+1}{2} \right)\\
&\quad +\|~\|^{r}E^{(2r-j-2,r)}_{-1}\left(\frac{r}{2} \right)F^{(2r-j-1,r)}_0\left(\frac{r}{2} \right)\\
&\quad +\|~\|^{r}E^{(2r-j-2,r)}_{-2}\left(\frac{r}{2} \right)F^{(2r-j-1,r)}_1\left(\frac{r}{2} \right)\\
&\quad +\|~\|^{r-j-1}E^{(2r-j-2,r-1)}_{-1}\left(\frac{r-1}{2} \right)G^{(2r-j-1,r)}_0\left(\frac{r}{2} \right)\\
&\quad +\|~\|^{r-j-1}E^{(2r-j-2,r-1)}_{-2}\left(\frac{r-1}{2} \right)G^{(2r-j-1,r)}_1\left(\frac{r}{2} \right)\\
&\quad -\|~\|^{r-j-1}\log \|~\|E^{(2r-j-2,r-1)}_{-2}\left(\frac{r-1}{2} \right)G^{(2r-j-1,r)}_0\left(\frac{r}{2} \right),
\end{aligned}$$
and
$$\begin{aligned}
&\quad E^{(2r-j-1,r-j-1)}_{0,Q}\left(\frac{r-j-1}{2} \right)\\
&=\|~\|^{2r-j-1}E^{(2r-j-2,r-j-2)}_{0}\left(\frac{r-j}{2} \right)\\
&\quad +\|~\|^{r-j-1}E^{(2r-j-2,r-j-1)}_{0}\left(\frac{r-j-1}{2} \right)F^{(2r-j-1,r-j-1)}_0\left(\frac{r-j-1}{2} \right)\\
&\quad +\|~\|^{r-j-1}E^{(2r-j-2,r-j-1)}_{-1}\left(\frac{r-j-1}{2} \right)F^{(2r-j-1,r-j-1)}_1\left(\frac{r-j-1}{2} \right)\\
&\quad +\|~\|^{r}E^{(2r-j-2,r-j-2)}_{0}\left(\frac{r-j-2}{2} \right)G^{(2r-j-1,r-j-1)}_0\left(\frac{r-j-1}{2} \right)\\
&\quad +\|~\|^{r}E^{(2r-j-2,r-j-2)}_{-1}\left(\frac{r-j-2}{2} \right)G^{(2r-j-1,r-j-1)}_1\left(\frac{r-j-1}{2} \right)\\
&\quad -\|~\|^{r}\log \|~\|E^{(2r-j-2,r-j-2)}_{-1}\left(\frac{r-j-2}{2} \right)G^{(2r-j-1,r-j-1)}_0\left(\frac{r-j-1}{2} \right),
\end{aligned}$$
and
$$\begin{aligned}
&\quad E^{(2r-j-1,2r-j-1)}_{0,Q}\left(\frac{j+1}{2} \right)\\
&=\|~\|^{r}E^{(2r-j-2,2r-j-2)}_{0}\left(\frac{j+2}{2} \right)\\
&\quad +\|~\|^{r}\log\|~\|E^{(2r-j-2,2r-j-2)}_{-1}\left(\frac{j+2}{2} \right)\\
&\quad +\|~\|^{r-j-1}E^{(2r-j-2,2r-j-2)}_{0}]\left(\frac{r}{2} \right)H^{2r-j-1}_0\left(\frac{j+1}{2} \right)\\
&\quad +\|~\|^{r-j-1}E^{(2r-j-2,2r-j-2)}_{-1}\left(\frac{r}{2} \right)H^{2r-j-1}_1\left(\frac{j+1}{2} \right)\\
&\quad -\|~\|^{r-j-1}\log\|~\|E^{(2r-j-2,2r-j-2)}_{-1}\left(\frac{r}{2} \right)H^{2r-j-1}_0\left(\frac{j+1}{2} \right),
\end{aligned}$$
and
$$\begin{aligned}
&\quad E^{(2r-j-1,2r-j-1)}_{-1,Q}\left(\frac{j+1}{2} \right)\\
&=\|~\|^r E^{(2r-j-2,2r-j-2)}_{-1}\left(\frac{j+2}{2} \right)\\
&\quad +\|~\|^{r-j-1}E^{(2r-j-1,2r-j-1)}_{-1}\left(\frac{j}{2} \right)H^{(2r-j-1)}_0\left(\frac{j+1}{2} \right).
\end{aligned}$$
We put the above four expressions into $(2)$ and extract the terms containing only $\|~\|^r$ by Lemma \ref{Lem 4.2}. Then we obtain
$$\begin{aligned}
&\left[ E^{(2r-j-2,r)}_{-1}\left(\frac{r}{2}\right)F^{(2r-j-1,r)}_0\left(\frac{r}{2} \right)+E^{(2r-j-2,r)}_{-2}\left(\frac{r}{2} \right)F^{(2r-j-1,r)}_1\left(\frac{r}{2} \right)\right]\prod_{i=0}^{j}F^{(2r-i,r)}_0\left(\frac{r}{2} \right)\\
&+\left[E^{(2r-j-2,r-j-2)}_{0}\left(\frac{r-j-2}{2} \right)G^{(2r-j-1,r-j-1)}_0\left(\frac{r-j-1}{2} \right)\right.\\
&\quad
\left.+E^{(2r-j-2,r-j-2)}_{-1}\left(\frac{r-j-2}{2} \right)G^{(2r-j-1,r-j-1)}_1\left(\frac{r-j-1}{2} \right) \right]\prod_{i=0}^{j}G^{(2r-i,r-i)}\left(\frac{r-i}{2}\right)\\
&=2c_r E^{(2r-j-2,2r-j-2)}_{0}\left(\frac{j+2}{2} \right)+\gamma_j E^{(2r-j-2,2r-j-2)}_{-1}\left(\frac{j+2}{2} \right).
\end{aligned}$$
By Proposition \ref{Prop 4.6} and Proposition \ref{Prop 4.3}, we have
$$\begin{aligned}
&E^{(2r-j-2,r)}_{-2}\left(\frac{r}{2} \right)=d_{2r-j-2,r}E^{(2r-j-2,2r-j-2)}_{-1}\left(\frac{j+2}{2} \right),\\
&E^{(2r-j-2,r-j-2)}_{-1}\left(\frac{r-j-2}{2} \right)=c_{2r-j-2,r-j-2}E^{(2r-j-2,2r-j-2)}_{-1}\left(\frac{j+2}{2} \right).
\end{aligned}$$
So we obtain
$$\begin{aligned}
&E^{(2r-j-2,r)}_{-1}\left(\frac{r}{2} \right)\prod_{i=0}^{j+1}F^{(2r-i,r)}_0\left(\frac{r}{2} \right)\\
&\quad \quad +E^{(2r-j-2,r-j-2)}_{0}\left(\frac{r-j-2}{2} \right)\prod_{i=0}^{j+1}G^{(2r-i,r-i)}\left(\frac{r-i}{2} \right)\\
&=2c_r E^{(2r-j-2,2r-j-2)}_0\left(\frac{j+2}{2} \right)+\gamma_{j+1} E^{(2r-j-2,2r-j-2)}_{-1}\left(\frac{j+2}{2} \right),
\end{aligned}$$
where
$$\begin{aligned}
&\gamma_{j+1}=\gamma_j-d_{2r-j-2,r}\prod_{i=0}^{j}F^{(2r-i,r)}_0\left(\frac{r}{2} \right)\\
&\quad \quad \quad -c_{2r-j-2,r-j-2}G^{(2r-j-1,r-j-1)}_1\left(\frac{r-j-1}{2} \right)\prod_{i=0}^{j}G^{(2r-i,r-i)}\left(\frac{r-i}{2} \right).
\end{aligned}$$
\end{proof}

\section{Weak second term identity}\label{Section 5}
In this section, we will prove the {\sl weak} second term identity for 2nd term range.

\subsection{First term identity for 1st term range}
Assume $n\geq 2r$. Fix $\vphi \in S(V^n(\BA))$. We write the Laurent expansion of the regularized theta integral at $s=\rho_r=r/2$ as
$$\CE^{(n,r)}(g,s;\vphi)=\frac{B^{(n,r)}_{-1}(\vphi)(g)}{s-\rho_r}+B^{(n,r)}_0(\vphi)(g)+O(s-\rho_r).$$
On the other hand, we write the Laurent expansion of the Siegel Eisenstein series at $s=\rho_{n,r}=(2r-n)/2$ as
$$E^{(n,n)}(g,s;\Phi_{\vphi})=\frac{A^{(n,r)}_{-1}(\vphi)(g)}{s-\rho_{n,r}}+A^{(n,r)}_0(\vphi)(g)+O(s-\rho_{n,r}).$$

For $n\geq 2r+1$, we have the following first term identity due to Ichino, which is a special case of \cite{Ich} Theorem 4.1.

\begin{prop}\label{Prop 5.1}
Assume that $n\geq 2r+1$. Then for all $\vphi \in S(V_c^n(\BA))$, there is a non-zero constant $a_{n,r}$ independent of $\vphi$ such that
$$A_{-1}^{(n,n-r)}(\vphi)=a_{n,r}B_{-1}^{(n,r)}(\mathrm{Ik}(\vphi)).$$
The constant $a_{n,r}$ is given by
$$\begin{aligned}
&a_{n,r}=|D_E|^{\frac{n(2r-n)}{2}}\prod_{i=0}^{n-1} \frac{\xi_F(i+1-2r,\eta^{i})}{\xi_F(2n-2r-i,\eta^{i})}.
\end{aligned}$$
\end{prop}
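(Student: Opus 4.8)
The plan is to deduce Proposition~\ref{Prop 5.1} from Theorem~\ref{Thm 3.5}(i) by pinning down the standard section produced there when the Schwartz function is pulled back from the complementary space, and then to compute the resulting constant by an explicit place-by-place analysis of the intertwining operator.

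Fix $\vphi\in S(V_c^n(\BA))$; since $n\geq 2r+1$ forces $\dim V_c>\dim V$, we have $\mathrm{Ik}(\vphi)\in S(V^n(\BA))$. Applying Theorem~\ref{Thm 3.5}(i) to $\mathrm{Ik}(\vphi)$ and reading off the section from its proof,
\[
B_{-1}^{(n,r)}(\mathrm{Ik}(\vphi))=A_{-1}^{(n,n-r)}(\Phi),\qquad
\Phi(g,s)=-\tfrac12\,\frac{a(\rho_{n,r})}{\Res_{s=\rho_{n,r}}b(s)}\,\big(M^0(-s)\Phi_{\mathrm{Ik}(\vphi)}\big)(g,s),
\]
where $\rho_{n,r}=(2r-n)/2$ and $a(s),b(s),M^0(s)=\tfrac{b(s)}{a(s)}M(s)$ are as in the functional equation recalled before Theorem~\ref{Thm 3.5}. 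Since $A_{-1}^{(n,n-r)}(\vphi)$ is, by definition, the residue at $s=\rho_{n,n-r}=(n-2r)/2=-\rho_{n,r}$ of $E^{(n,n)}(g,s;\Phi_{\vphi})$, it suffices to establish a section identity of the shape $M(s)\,\Phi_{\mathrm{Ik}(\vphi)}=c_0(s)\,\Phi_{\vphi}$ (both sides in $I^{n,n}(-s)$ after the usual parameter shift), for an explicit meromorphic scalar $c_0(s)$. Granting this, $\big(M^0(-s)\Phi_{\mathrm{Ik}(\vphi)}\big)(g,s)=h(s)\,\Phi_{\vphi}(g,s)$ with $h(s)=\tfrac{b(-s)}{a(-s)}c_0(-s)$; since the left-hand side is a holomorphic section family, $h$ is holomorphic at $s=\rho_{n,n-r}$, and comparing residues there gives
\[
B_{-1}^{(n,r)}(\mathrm{Ik}(\vphi))=-\tfrac12\,\frac{a(\rho_{n,r})}{\Res_{s=\rho_{n,r}}b(s)}\,h(\rho_{n,n-r})\cdot A_{-1}^{(n,n-r)}(\vphi),
\]
so that $a_{n,r}$ is the reciprocal of this explicit constant, and the closed form claimed in the statement follows after reorganizing the completed zeta factors using their functional equations.

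The core of the argument is the section identity $M(s)\Phi_{\mathrm{Ik}(\vphi)}=c_0(s)\Phi_{\vphi}$, which I would prove place by place. Writing $\Phi_{\mathrm{Ik}(\vphi)}(g,s)=|a(g)|^{s-\rho_{n,r}}\omega(g)\mathrm{Ik}(\vphi)(0)$, inserting the definition of the Ikeda map (an integral of $\vphi_{K_c}$ over $M_{n-2r,n}(\BA_E)$, the remaining coordinates set to zero), and expanding $M(s)\Phi=\int_{N_n(\BA)}\Phi(w_nu\,\cdot\,,s)\,du$ using that $\omega(w_n)$ acts by the Fourier transform (up to a normalizing constant), the integral over $N_n(\BA)\cong\mathrm{Her}_n(\BA)$ and the Ikeda integral amalgamate --- after an affine change of variables in the unipotent coordinate and a stratification by rank --- into precisely the integral defining the standard section $\Phi_{\vphi}$ for the complementary pair $(G,U(V_c))$, times a local factor. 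At the non-archimedean places where all data are unramified this factor is the Gindikin--Karpelevich computation, contributing a product of ratios of local factors $L(\,\cdot\,,\eta_v^{i})$ with arguments shifted by multiples of $s$ (compare \cite{Yam}, \S3, and \cite{KR2}); at the remaining finite places the two integrals match directly; and at the archimedean places the intertwining integral evaluated on the Gaussian supplies the $\Gamma$-factors together with the power of $|D_E|$ coming from the $\psi_E$-self-dual measures. Multiplying the local contributions yields $c_0(s)$, and substituting it into $h(s)$, specializing to $s=\rho_{n,n-r}$, and repeatedly invoking $\xi_F(s,\eta)=\xi_F(1-s,\eta)$ and $\xi_E(s)=\xi_E(1-s)$ to reindex the products collapses everything to
\[
a_{n,r}=|D_E|^{\frac{n(2r-n)}{2}}\prod_{i=0}^{n-1}\frac{\xi_F(i+1-2r,\eta^{i})}{\xi_F(2n-2r-i,\eta^{i})}.
\]

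The step I expect to be the main obstacle is this last bookkeeping: keeping the normalizations of every Haar measure fixed in Section~\ref{Section 2} (the Tamagawa measure on $GL_r(\BA_E)$, the $\psi_E$-self-dual measures on $\BA_E$, $\vol(K_c)=1$) consistent across the definitions of the Ikeda map, the regularized theta integral and the intertwining operator, and checking that the archimedean intertwining integrals on the Gaussian assemble so that all discriminant powers collapse to the single exponent $n(2r-n)/2$ while the $\Gamma$-factors complete the $L$-functions in numerator and denominator. A lighter route, which already suffices for the applications in this paper, is to verify the identity only for the spherical Schwartz function $\vphi_c^0$ --- using $\mathrm{Ik}(\vphi_c^0)=\vphi^0$, Lemma~\ref{Lem 3.4}, the functional equation, and the explicit spherical first term identity of Proposition~\ref{Prop 4.3} --- which already pins down $a_{n,r}$, and then to extend to all $\vphi\in S(V_c^n(\BA))$ using the $G(\BA)$-equivariance of $A_{-1}^{(n,n-r)}$ and of $B_{-1}^{(n,r)}\circ\mathrm{Ik}$ together with the (local) section identity above.
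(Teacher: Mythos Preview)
Your ``lighter route'' at the end is essentially the paper's proof, and it is the efficient one. The paper invokes Ichino's Theorem~4.1 in \cite{Ich} directly: for every $\vphi\in S(V_c^n(\BA))$ one has
$A_{-1}^{(n,n-r)}(\vphi)=c_{K_c}\,I(g,\mathrm{Ik}(\vphi))=c_{K_c}\,B_{-1}^{(n,r)}(\mathrm{Ik}(\vphi))$,
so existence of a single constant $a_{n,r}=c_{K_c}$ valid for all $\vphi$ is already established. The value of $a_{n,r}$ is then read off from the spherical case $\vphi=\vphi_c^0$, using $\mathrm{Ik}(\vphi_c^0)=\vphi^0$, Lemma~\ref{Lem 3.4} and Proposition~\ref{Prop 4.3}, exactly the ingredients you name.

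Your main route, however, has a genuine problem. The section identity $M(s)\Phi_{\mathrm{Ik}(\vphi)}=c_0(s)\Phi_{\vphi}$ cannot hold for general $\vphi$: the left-hand side depends only on $\mathrm{Ik}(\vphi)$, while the right-hand side depends on the finer datum $\Phi_\vphi$, and the Ikeda map (which averages over $K_c$ and integrates out a subspace) is not injective. Two Schwartz functions with the same Ikeda image but distinct Siegel--Weil sections would immediately contradict your identity. The section-level statement that actually underlies Ichino's argument goes in the \emph{opposite} direction: a suitably normalized intertwining operator at $s=\rho_{n,n-r}$ carries $\Phi_\vphi$ to a multiple of $\Phi_{\mathrm{Ik}(\vphi)}$, with information flowing from $V_c$ to $V$. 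Your fallback extension ``using $G(\BA)$-equivariance \ldots\ together with the (local) section identity above'' inherits the same gap, since equivariance alone only reaches the $G(\BA)$-span of $\vphi_c^0$, not all of $S(V_c^n(\BA))$. The clean fix is to quote Ichino's theorem for existence and keep your spherical computation for the value of $a_{n,r}$.
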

\begin{proof}
Consider the space $V_c$, which is split and of dimension $2(n-r)$. Then \cite{Ich} Theorem 4.1 says that
$$A_{-1}^{(n,n-r)}(\vphi)(g)=\Res_{s=\rho_{n,n-r}}E^{(n,n)}(g,s;\Phi_{\vphi})=c_{K_c} I(g,\mathrm{Ik}(\vphi)),$$
here $c_{K_c}$ is the constant such that
$$ dh=c_{K_{c}} d_l p ~dk,$$
where $dh$ is the measure on $H_{c}(\BA)$ such that $\vol(H_{c}(F)\bs H_{c}(\BA))=1$, $d_l p=dm~dn$ is left Haar measure on $P_{H_c}(\BA)$, $K_c$ is a maximal compact subgroup of $H_c(\BA)$ as in Section \ref{Section 2.6}, $dk$ is the Haar measure on $K_{c}$ such that $\vol(K_{c})=1$.

But $$I(g,\mathrm{Ik}(\vphi))= \Res_{s=\rho_r}\CE^{(n,r)}(g,s;\mathrm{Ik}(\vphi))=B_{-1}^{(n,r)}(\mathrm{Ik}(\vphi))(g).$$
So we have
$$A_{-1}^{(n,n-r)}(\vphi)=c_{K_c} B_{-1}^{(n,r)}(\mathrm{Ik}(\vphi)).$$

We can compute $a_{n,r}=c_{K_c}$ as follows.

First note that
$$E^{(n,n)}(g,s;\Phi_{\vphi_c^0})=\vphi_c^0(0)E^{(n,n)}(g,s).$$
Then we have
$$\begin{aligned}
A^{(n,n-r)}_{-1}(\vphi^0_c)
&=\vphi^0_c(0)E^{(n,n)}_{-1}\left(\frac{n-2r}{2} \right)\\
&=\vphi^0_c(0)c_{n,r}^{-1} E^{(n,r)}_{-1}\left(\frac{r}{2} \right),
\end{aligned}$$
by Proposition \ref{Prop 4.3}.

On the other hand, by Lemma \ref{Lem 3.4}, we have
$$\begin{aligned}
B^{(n,r)}_{-1}(\mathrm{Ik}(\vphi^0_c))
&=B^{(n,r)}_{-1}(\vphi^0)\\
&=|D_E|^{-\frac{rn}{2}}\prod_{i=1}^r \frac{\xi_E(n-i+1)}{\xi_E(i)} E^{(n,r)}_{-1}\left(\frac{r}{2} \right).
\end{aligned}$$
Combining, we obtain
$$A^{(n,n-r)}_{-1}(\vphi^0_c)=\vphi^0_c(0) |D_E|^{\frac{rn}{2}}c_{n,r}^{-1}\prod_{i=1}^r \frac{\xi_E(i)}{\xi_E(n-i+1)} B^{(n,r)}_{-1}(\mathrm{Ik}^{(n,r)}(\vphi^0_c)).$$
But $\vphi^0_c(0)|D_E|^{\frac{rn}{2}}=|D_E|^{-n(n-r)/2}|D_E|^{\frac{rn}{2}}=|D_E|^{\frac{n(2r-n)}{2}}$, and
$$c_{n,r}=\prod_{i=0}^{n-1} \frac{\xi_F(2n-2r-i,\eta^i)}{\xi_F(i+1-2r,\eta^i)}\prod_{i=1}^{r}\frac{\xi_E(i)}{\xi_E(n-i+1)}.$$
So
$$\begin{aligned}
&a_{n,r}=\vphi^0_c(0)|D_E|^{\frac{rn}{2}}c_{n,r}^{-1}\prod_{i=1}^r \frac{\xi_E(i)}{\xi_E(n-i+1)}\\
&\quad \quad =|D_E|^{\frac{n(2r-n)}{2}}\prod_{i=0}^{n-1} \frac{\xi_F(i+1-2r,\eta^{i})}{\xi_F(2n-2r-i,\eta^{i})}.
\end{aligned}$$
\end{proof}

Next consider the boundary case $n=2r$. The first term identity in this case, also due to Ichino \cite{Ich}, is just Theorem \ref{Thm 3.5} (ii). We record it here for completeness.
\begin{prop}\label{Prop 5.2}
Assume that $n=2r$. Then for all $\vphi \in S(V^n(\BA))$, we have
$$A_0^{(2r,r)}(\vphi)=2B_{-1}^{(2r,r)}(\vphi).$$
\end{prop}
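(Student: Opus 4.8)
The plan is to read this statement off from the regularized Siegel-Weil formula of Ichino in the boundary case $n=2r$, which has already been recorded above as Theorem \ref{Thm 3.5}(ii). First I would note that when $n=2r$ one has $\rho_{n,r}=(2r-n)/2=0$, and by the location of the poles of Siegel Eisenstein series (the Proposition of V.Tan quoted in Section \ref{Section 3}), $E^{(2r,2r)}(g,s;\Phi_{\vphi})$ is holomorphic at $s=0$. Hence $A^{(2r,r)}_{-1}(\vphi)=0$ and $A^{(2r,r)}_0(\vphi)(g)=E^{(2r,2r)}(g,0;\Phi_{\vphi})$ is simply the value of the Siegel Eisenstein series at $s=0$.

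On the theta side, $n=2r$ lies in the first term range, so by the discussion in Section \ref{Section 3.2} the regularized theta integral $\CE^{(2r,r)}(g,s;\vphi)$ has at most a simple pole at $s=\rho_r=r/2$; thus $B^{(2r,r)}_{-1}(\vphi)(g)=\Res_{s=r/2}\CE^{(2r,r)}(g,s;\vphi)=I(g,\vphi)$, where $I(g,\vphi)$ is the (regularized) theta integral in the notation of \cite{Ich} p.251. Ichino's identity (Theorem \ref{Thm 3.5}(ii), equivalently \cite{Ich} Theorem 4.2) states precisely that $E^{(2r,2r)}(g,0;\Phi_{\vphi})=2\,I(g,\vphi)$ for all $\vphi\in S(V^n(\BA))$. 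Combining the three displayed relations yields $A^{(2r,r)}_0(\vphi)=2B^{(2r,r)}_{-1}(\vphi)$.

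There is essentially no obstacle here: the entire content is Theorem \ref{Thm 3.5}(ii), and the only thing to verify is the bookkeeping that identifies the leading Laurent coefficients on each side with the objects appearing in Ichino's formula --- in particular that $s=0$ is a regular point of the Siegel Eisenstein series so that $A^{(2r,r)}_0(\vphi)$ literally equals its value there, and that $s=r/2$ is a simple (not double) pole of $\CE^{(2r,r)}(g,s;\vphi)$ when $n=2r$ so that $B^{(2r,r)}_{-1}(\vphi)$ is literally its residue. With these two observations the proof is just a one-line reference.
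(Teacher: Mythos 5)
Your proposal is correct and matches the paper exactly: the paper gives no independent argument for Proposition \ref{Prop 5.2} but simply records it as a restatement of Theorem \ref{Thm 3.5}(ii) (Ichino's boundary first term identity). The extra bookkeeping you supply --- holomorphy of $E^{(2r,2r)}(g,s;\Phi_{\vphi})$ at $s=0$ and the simplicity of the pole of $\CE^{(2r,r)}(g,s;\vphi)$ at $s=r/2$ --- is consistent with what the paper establishes elsewhere (Tan's proposition and Lemma \ref{Lem 4.4}(ii)).
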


{\bf Remark.} We now show that $c_K=\kappa$.

By Proposition \ref{Prop 4.5}, we have
$$\begin{aligned}
A^{(2r,r)}_0(\vphi^0)=E^{(2r,2r)}(0;\Phi_{\vphi^0})=\vphi^0(0) E^{(2r,2r)}_0(0)=\vphi^0(0)c_r^{-1}E^{(2r,r)}_{-1}(\frac{r}{2}).
\end{aligned}$$

On the other hand, by Lemma \ref{Lem 3.4}, we have
$$\begin{aligned}
B^{(2r,r)}_{-1}(\vphi^0)
&=\Res_{s=\frac{r}{2}}\CE^{(2r,r)}(s;\vphi^0)\\
&=\frac{c_K}{\kappa}|D_E|^{-r^2}\prod_{i=1}^r \frac{\xi_E(2r-i+1)}{\xi_E(i)}E^{(2r,r)}_{-1}\left(\frac{r}{2} \right).
\end{aligned}$$

So we have
$$\begin{aligned}
A^{(2r,r)}_0(\vphi^0)
&=\frac{\kappa}{c_K}\vphi^0(0)|D_E|^{r^2}c_r^{-1}\prod_{i=1}^r \frac{\xi_E(i)}{\xi_E(2r-i+1)}B^{(2r,r)}_{-1}(\vphi^0)\\
&=\frac{\kappa}{c_K} \cdot 2B^{(2r,r)}_{-1}(\vphi^0).
\end{aligned}$$
Since $A^{(2r,r)}_0(\vphi^0)=2 B^{(2r,r)}_{-1}(\vphi^0)$ by the above proposition, we obtain $c_K=\kappa$.

\subsection{Weak first term identity for 2nd term range}
Suppose $r+1\leq n \leq 2r-1$. We can extend the first term identity for the spherical function $\vphi^0$ to all $\vphi \in S(V^n(\BA))^{\circ}$, where $S(V^n(\BA))^{\circ}$ is the $G(\BA)$-span of the spherical Schwartz function $\vphi^0$ in $S(V^n(\BA))$.
\begin{prop}\label{Prop 5.3}
[Weak first term identity for 2nd term range]
Suppose $r+1\leq n \leq 2r-1$. Then for $\vphi \in S(V^n(\BA))^{\circ}$, we have
$$A^{(n,r)}_{-1}(\vphi)=b_{n,r}B^{(n,r)}_{-2}(\vphi),$$
where
$$b_{n,,r}=d_{n,r}^{-1}\prod_{i=1}^r \frac{\xi_E(i)}{\xi_E(n-i+1)}.$$
\end{prop}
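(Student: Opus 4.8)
The plan is to verify the identity first for the single spherical function $\vphi^{0}$ and then to propagate it over the whole $G(\BA)$-span. For the second step, recall from Section~\ref{Section 3} that every $B^{(n,r)}_{d}$ is $G(\BA)$-intertwining, and that $A^{(n,r)}_{-1}$ is the leading term of the Laurent expansion of $E^{(n,n)}(g,s;\Phi_{\vphi})$ at $s=\rho_{n,r}$ (the pole there being at most simple), hence also $G(\BA)$-intertwining; in the degenerate case where the pole is absent, $A^{(n,r)}_{-1}\equiv 0$ and is trivially intertwining. Thus both $A^{(n,r)}_{-1}$ and $b_{n,r}B^{(n,r)}_{-2}$ are $G(\BA)$-intertwining linear maps $S(V^{n}(\BA))\to\CA(G)$, so as soon as they agree on $\vphi^{0}$ they agree on every finite combination $\sum_{i}c_{i}\,\omega(g_{i})\vphi^{0}$, i.e. on all of $S(V^{n}(\BA))^{\circ}$. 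It therefore suffices to prove $A^{(n,r)}_{-1}(\vphi^{0})=b_{n,r}B^{(n,r)}_{-2}(\vphi^{0})$.

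For $\vphi^{0}$ I would use two explicit descriptions. On the Eisenstein side, $E^{(n,n)}(g,s;\Phi_{\vphi^{0}})=\vphi^{0}(0)E^{(n,n)}(g,s)$, and this series has at most a simple pole at $s=\rho_{n,r}=(2r-n)/2$, so
$$A^{(n,r)}_{-1}(\vphi^{0})=\vphi^{0}(0)\,E^{(n,n)}_{-1}\!\left(\tfrac{2r-n}{2}\right).$$
On the theta side, Lemma~\ref{Lem 3.4} gives
$$\CE^{(n,r)}(g,s;\vphi^{0})=|D_{E}|^{-r(s+n-\frac r2)/2}\prod_{i=1}^{r}\frac{\xi_{E}(s+n-\frac r2-i+1)}{\xi_{E}(i)}\,E^{(n,r)}(g,s),$$
and at $s=r/2$ the prefactor equals $|D_{E}|^{-rn/2}\prod_{i=1}^{r}\xi_{E}(n-i+1)/\xi_{E}(i)=\vphi^{0}(0)\prod_{i=1}^{r}\xi_{E}(n-i+1)/\xi_{E}(i)$, which is holomorphic and nonzero there since the numerator arguments $n,n-1,\dots,n-r+1$ all lie in $[2,\infty)$ and the constants $\xi_{E}(i)$, $1\le i\le r$, are finite and nonzero. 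As $E^{(n,r)}(g,s)$ has a double pole at $s=r/2$ in the second term range, comparing the $(s-r/2)^{-2}$ coefficients yields
$$B^{(n,r)}_{-2}(\vphi^{0})=\vphi^{0}(0)\prod_{i=1}^{r}\frac{\xi_{E}(n-i+1)}{\xi_{E}(i)}\,E^{(n,r)}_{-2}\!\left(\tfrac r2\right).$$
Now I would invoke Proposition~\ref{Prop 4.6} (the spherical first term identity for the second term range), namely $E^{(n,r)}_{-2}(\tfrac r2)=d_{n,r}E^{(n,n)}_{-1}(\tfrac{2r-n}{2})$, to conclude
$$B^{(n,r)}_{-2}(\vphi^{0})=d_{n,r}\prod_{i=1}^{r}\frac{\xi_{E}(n-i+1)}{\xi_{E}(i)}\,\vphi^{0}(0)\,E^{(n,n)}_{-1}\!\left(\tfrac{2r-n}{2}\right)=d_{n,r}\prod_{i=1}^{r}\frac{\xi_{E}(n-i+1)}{\xi_{E}(i)}\,A^{(n,r)}_{-1}(\vphi^{0}).$$
Solving for $A^{(n,r)}_{-1}(\vphi^{0})$ gives precisely $b_{n,r}B^{(n,r)}_{-2}(\vphi^{0})$ with $b_{n,r}=d_{n,r}^{-1}\prod_{i=1}^{r}\xi_{E}(i)/\xi_{E}(n-i+1)$, as claimed.

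I do not anticipate a genuine obstacle: the substantive input, Proposition~\ref{Prop 4.6}, is already established, and what remains is the equivariance argument of the first paragraph together with the bookkeeping of $\vphi^{0}(0)$, powers of $|D_{E}|$, and zeta factors. The only points needing a moment's care are (i) that the prefactor in Lemma~\ref{Lem 3.4} is holomorphic and nonvanishing at $s=r/2$, so that $\CE^{(n,r)}$ and $E^{(n,r)}$ have poles of the same order there and the $(s-r/2)^{-2}$ coefficients match cleanly, and (ii) that $E^{(n,n)}(g,s;\Phi_{\vphi^{0}})$ has at most a simple pole at $\rho_{n,r}$, so that $A^{(n,r)}_{-1}$ is the leading term and hence intertwining — both of which are recorded in Section~\ref{Section 3}. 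Should any of these leading terms vanish identically, the asserted identity holds trivially with both sides zero, so the argument is robust in any case.
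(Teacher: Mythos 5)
Your proof is correct and follows essentially the same route as the paper: reduce to $\vphi^{0}$ by the $G(\BA)$-intertwining property of $A^{(n,r)}_{-1}$ (leading term of an at most simple pole) and $B^{(n,r)}_{-2}$, then combine $E^{(n,n)}(g,s;\Phi_{\vphi^{0}})=\vphi^{0}(0)E^{(n,n)}(g,s)$, Lemma \ref{Lem 3.4}, and Proposition \ref{Prop 4.6}, with the bookkeeping $\vphi^{0}(0)=|D_E|^{-rn/2}$ cancelling the $|D_E|$-power. The constants come out exactly as in the statement, so there is nothing to add.
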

\begin{proof}
We have
$$\begin{aligned}
A^{(n,r)}_{-1}(\vphi^0)
&=\vphi^0(0)E^{(n,n)}_{-1}\left(\frac{2r-n}{2} \right)\\
&=\vphi^0(0)d_{n,r}^{-1}E^{(n,r)}_{-2}\left(\frac{r}{2} \right),
\end{aligned}$$
by Proposition \ref{Prop 4.6}.

On the other hand, by Lemma \ref{Lem 3.4}, we have
$$B^{(n,r)}_{-2}(\vphi^0)=|D_E|^{-\frac{rn}{2}}\prod_{i=1}^r \frac{\xi_E(n-i+1)}{\xi_E(i)} E^{(n,r)}_{-2}\left(\frac{r}{2} \right).$$
So we obtain
$$\begin{aligned}
A^{(n,r)}_{-1}(\vphi^0)
&=\vphi^0(0)|D_E|^{\frac{rn}{2}}d_{n,r}^{-1}\prod_{i=1}^r \frac{\xi_E(i)}{\xi_E(n-i+1)}B^{(n,r)}_{-2}(\vphi^0)\\
&=d_{n,r}^{-1}\prod_{i=1}^r \frac{\xi_E(i)}{\xi_E(n-i+1)}B^{(n,r)}_{-2}(\vphi^0)\\
&=b_{n,r}B^{(n,r)}_{-2}(\vphi^0).
\end{aligned}$$
As $A^{(n,r)}_{-1}$ and $B^{(n,r)}_{-2}$ are $G(\BA)$-intertwining, we have, for $\vphi \in S(V^n(\BA))^{\circ}$, that
$$A^{(n,r)}_{-1}(\vphi)=b_{n,r}B^{(n,r)}_{-2}(\vphi).$$
\end{proof}

\subsection{Weak second term identity for 2nd term range}
Suppose $r+1\leq n \leq 2r-1$, i.e. we are in the second term range. We can also extend the second term identity for the spherical function $\vphi^0$ to $\vphi\in S(V^n(\BA))^{\circ}$. Note that there are four terms $E^{(n,r)}_{-1}(\frac{r}{2})$, $E^{(n,n)}_0(\frac{2r-n}{2})$, $E^{(n,n)}_{-1}(\frac{2r-n}{2})$ and $E^{(n,n-r)}_0(\frac{n-r}{2})$ in the spherical second term identity. Now we analyze these four terms.

For the term $E^{(n,n)}_{-1}(\frac{2r-n}{2})$, we have
$$ A^{(n,r)}_{-1}(\vphi^0)=\vphi^0(0) E^{(n,n)}_{-1}\left(\frac{2r-n}{2}\right).$$
Note that the map
$$\begin{aligned}
&A^{(n,r)}_{-1}: S(V^n(\BA)) \ra \CA(G)\\
&\quad \quad \quad \quad \quad \quad \quad\vphi \mapsto A^{(n,r)}_{-1}(\vphi)
\end{aligned}$$
is $G(\BA)$-intertwining, where $\CA(G)$ is the space of automorphic forms on $G(\BA)$.

For the term $E^{(n,n)}_0(\frac{2r-n}{2})$, we have
$$A^{(n,r)}_0(\vphi^0)=\vphi^0(0)E^{(n,n)}_0\left(\frac{2r-n}{2}\right).$$
The map
$$\begin{aligned}
& S(V^n(\BA)) \ra \CA(G) \\
&\quad \quad \quad \quad\vphi \mapsto A^{(n,r)}_0(\vphi)
\end{aligned}$$
is not $G(\BA)$-intertwining, but it is $G(\BA)$-intertwining modulo $\CR$, where $\CR=\mathrm{Im}(A^{(n,r)}_{-1})$
is the image of $A^{(n,r)}_{-1}$. In other words, the composite
$$\begin{aligned}
& S(V^n(\BA)) \ra \CA(G) \ra \CA(G)/\CR\\
&\quad \quad \quad \quad \vphi \mapsto A^{(n,r)}_0(\vphi) \mod \CR
\end{aligned}$$
is a $G(\BA)$-intertwining map.

For the term $E^{(n,r)}_{-1}(\frac{r}{2})$, we have, by Lemma \ref{Lem 3.4}, that
$$B^{(n,r)}_{-1}(\vphi^0)= |D_E|^{-\frac{rn}{2}}\prod_{i=1}^r \frac{\xi_E(n-i+1)}{\xi_E(i)} E^{(n,r)}_{-1}\left(\frac{r}{2}\right).$$
Note that the map $B^{(n,r)}_{-1}: \CS(V^n(\BA))\ra \CA(G)$ is $G(\BA)$-intertwining.

Finally, we need to take care of the complementary term $E^{(n,n-r)}_0(\frac{n-r}{2})$.
Since $n<2r$, we have Ikeda's map
$$\mathrm{Ik}=\mathrm{Ik}^{(n,n-r)}:\CS(V^n(\BA)) \ra \CS(V_c^n(\BA)).$$
For $\vphi \in \CS(V^n(\BA))$, we can define regularized theta integral $\CE^{(n,n-r)}(g,s;\mathrm{Ik}(\vphi))$ associated to $\mathrm{Ik}(\vphi)\in \CS(V_c^n(\BA))$ as in Section \ref{Section 3.2}. It has a simple pole at $s=(n-r)/2$, and we write its Laurent expansion there as
$$\CE^{(n,n-r)}(g,s;\mathrm{Ik}(\vphi))=\sum_{d=-1}^{\infty}B^{(n,n-r)}_d(\mathrm{Ik}(\vphi))(g)\left(s-\frac{n-r}{2}\right)^d.$$
Then $B^{(n,n-r)}_0 \circ \mathrm{Ik}: \CS(V^n(\BA))\ra \CA(G)$ defines a $G(\BA)$-intertwining map.
By Lemma \ref{Lem 3.4} we have
$$B^{(n,n-r)}_0(\mathrm{Ik}(\vphi^0))=|D_E|^{-\frac{n(n-r)}{2}}\prod_{i=1}^{n-r} \frac{\xi_E(n-i+1)}{\xi_E(i)} E^{(n,n-r)}_0\left(\frac{n-r}{2} \right).$$

Recall the spherical second term identity for 2nd term range (Theorem \ref{Thm 4.8}):
$$\begin{aligned}
&E^{(n,r)}_{-1}\left(\frac{r}{2} \right)\prod_{i=0}^{2r-n-1}F^{(2r-i,r)}_0\left(\frac{r}{2} \right)\\
&\quad +E^{(n,n-r)}_0\left(\frac{n-r}{2} \right)\prod_{i=0}^{2r-n-1}G^{(2r-i,r-i)}\left(\frac{r-i}{2}\right)\\
&=2 c_r E^{(n,n)}_0\left(\frac{2r-n}{2} \right) + \gamma_{2r-n-1} E^{(n,n)}_{-1}\left(\frac{2r-n}{2}\right),
\end{aligned}$$
which can be rewritten as
$$\begin{aligned}
&|D_E|^{\frac{rn}{2}}\cdot \prod_{i=1}^r \frac{\xi_E(i)}{\xi_E(n-i+1)}\cdot B^{(n,r)}_{-1}(\vphi^0) \cdot \prod_{i=0}^{2r-n-1}F^{(2r-i,r)}_0\left(\frac{r}{2}\right)\\
&\quad +|D_E|^{\frac{(n-r)n}{2}}\cdot \prod_{i=1}^{n-r}\frac{\xi_E(i)}{\xi_E(n-i+1)} \cdot B^{(n,n-r)}_0(\mathrm{Ik}(\vphi^0))
\cdot \prod_{i=0}^{2r-n-1}G^{(2r-i,r-i)}\left(\frac{r-i}{2}\right)\\
&\equiv 2c_r \vphi^0(0)^{-1} A^{(n,r)}_0(\vphi^0)\mod \CR.
\end{aligned}$$
We can simplify the above as
$$\begin{aligned}
&B^{(n,r)}_{-1}(\vphi^0)+|D_E|^{\frac{n(n-2r)}{2}}\cdot \prod_{i=n-r+1}^r \frac{\xi_E(i)}{\xi_E(n-i+1)}\\
&\quad \quad \quad \quad \quad \cdot \prod_{i=0}^{2r-n-1}\left\{G^{(2r-i,r-i)}\left(\frac{r-i}{2}\right) F^{(2r-i,r)}\left(\frac{r}{2}\right)^{-1}\right\} \cdot B^{(n,n-r)}_0(\mathrm{Ik}(\vphi^0))\\
&\equiv 2c_r \vphi^0(0)^{-1} |D_E|^{-\frac{rn}{2}} \prod_{i=1}^r \frac{\xi_E(n-i+1)}{\xi_E(i)} \cdot \prod_{i=0}^{2r-n-1}F^{(2r-i,r)}\left(\frac{r}{2}\right)^{-1}\cdot  A^{(n,r)}_0(\vphi^0) \mod \CR.
\end{aligned}$$

Note that
$$\begin{aligned}
&F^{(2r-i,r)}\left(\frac{r}{2}\right)=\frac{\xi_E(r-i)}{\xi_E(2r-i)},\\
&G^{(2r-i,r-i)}\left(\frac{r-i}{2}\right)=\frac{\xi_E(r-i)\xi_E(-i)}{\xi_F(2r-2i-1,\eta)\xi_F(2r-2i)\xi_E(2r-i)},\\
&c_r=\frac{1}{2}\prod_{i=1}^r \frac{\xi_E(i)}{\xi_E(r+i)},\\
&\vphi^0(0)=|D_E|^{-rn/2},
\end{aligned}$$
and
$$\prod_{i=n-r+1}^r \frac{\xi_E(i)}{\xi_E(n-i+1)}=1.$$
So we have
$$\begin{aligned}
&\quad B^{(n,r)}_{-1}(\vphi^0)\\
&\quad +|D_E|^{\frac{n(n-2r)}{2}} \prod_{i=0}^{2r-n-1}\left(\frac{\xi_E(-i)}{\xi_F(2r-2i-1,\eta)\xi_F(2r-2i)}\right) B^{(n,n-r)}_0(\mathrm{Ik}(\vphi^0))\\
&\equiv  A^{(n,r)}_0(\vphi^0) \mod \CR.
\end{aligned}$$

Since $B^{(n,r)}_{-1}, B^{(n,n-r)}_0$ are $G(\BA)$-intertwining, and $A^{(n,r)}_0$ are $G(\BA)$-intertwining modulo $\CR=\mathrm{Im}(A^{(n,r)}_{-1})$, we obtain the following result.
\begin{thm}\label{Thm 5.4}
[Weak second term identity]
Suppose $r+1\leq n \leq 2r-1$. For $\vphi \in S(V^n(\BA))^{\circ}$, we have
$$\begin{aligned}
&\quad B^{(n,r)}_{-1}(\vphi)+|D_E|^{\frac{n(n-2r)}{2}} \prod_{i=0}^{2r-n-1}\left(\frac{\xi_E(-i)}{\xi_F(2r-2i-1,\eta)\xi_F(2r-2i)}\right) B^{(n,n-r)}_0(\mathrm{Ik}(\vphi))\\
& \equiv A^{(n,r)}_0(\vphi) \mod \mathrm{Im}(A^{(n,r)}_{-1}),
\end{aligned}$$
where $\mathrm{Im}(A^{(n,r)}_{-1})$ is the image of the $A^{(n,r)}_{-1}$ map.
\end{thm}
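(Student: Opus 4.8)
The plan is to read off the statement from the spherical second term identity (Theorem \ref{Thm 4.8}), translated through Lemma \ref{Lem 3.4} and the definition of the sections $\Phi_{\vphi}$, and then to propagate it across the orbit $S(V^n(\BA))^{\circ}$ by equivariance. Since $r+1\le n\le 2r-1$, one sets $j=2r-n-1$, which satisfies $0\le j\le r-2$ (in particular $r\ge 2$), and Theorem \ref{Thm 4.8} becomes an identity among the four automorphic forms $E^{(n,r)}_{-1}(r/2)$, $E^{(n,n-r)}_{0}((n-r)/2)$, $E^{(n,n)}_{0}((2r-n)/2)$ and $E^{(n,n)}_{-1}((2r-n)/2)$ on $G(\BA)$, whose coefficients are the explicit products of the quantities $F^{(2r-i,r)}(r/2)$, $G^{(2r-i,r-i)}((r-i)/2)$, $2c_r$ and $\gamma_{2r-n-1}$ recorded there.

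The next step is to rewrite each of these four coefficients in terms of the maps appearing in the statement, evaluated at $\vphi^0$. For the Siegel Eisenstein terms, $\Phi_{\vphi^0}$ is the spherical section with $\Phi_{\vphi^0}(1)=\vphi^0(0)$, so $E^{(n,n)}(g,s;\Phi_{\vphi^0})=\vphi^0(0)E^{(n,n)}(g,s)$, whence $A^{(n,r)}_{d}(\vphi^0)=\vphi^0(0)E^{(n,n)}_{d}((2r-n)/2)$ for $d=-1,0$. For the two theta terms, Lemma \ref{Lem 3.4} expresses $B^{(n,r)}_{-1}(\vphi^0)$ as a product of zeta factors times $E^{(n,r)}_{-1}(r/2)$, and, applied to the complementary space $V_c$ together with $\mathrm{Ik}(\vphi^0)=\vphi^0_c$, expresses $B^{(n,n-r)}_{0}(\mathrm{Ik}(\vphi^0))$ as a product of zeta factors times $E^{(n,n-r)}_{0}((n-r)/2)$; here $2r\ge n$, so $n\ge 2(n-r)$ and $\CE^{(n,n-r)}(g,s;\mathrm{Ik}(\vphi^0))$ has only a simple pole at $s=(n-r)/2$, consistent with the indexing. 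Substituting these, inserting the explicit evaluations $F^{(2r-i,r)}(r/2)=\xi_E(r-i)/\xi_E(2r-i)$, $G^{(2r-i,r-i)}((r-i)/2)=\xi_E(r-i)\xi_E(-i)/(\xi_F(2r-2i-1,\eta)\xi_F(2r-2i)\xi_E(2r-i))$, $c_r=\tfrac12\prod_{i=1}^r\xi_E(i)/\xi_E(r+i)$ and $\vphi^0(0)=|D_E|^{-rn/2}$, and using the telescoping identity $\prod_{i=n-r+1}^{r}\xi_E(i)/\xi_E(n-i+1)=1$, all zeta and discriminant factors collapse: the coefficient of $B^{(n,r)}_{-1}(\vphi^0)$ becomes $1$, the coefficient of $B^{(n,n-r)}_{0}(\mathrm{Ik}(\vphi^0))$ becomes exactly $|D_E|^{n(n-2r)/2}\prod_{i=0}^{2r-n-1}\xi_E(-i)/(\xi_F(2r-2i-1,\eta)\xi_F(2r-2i))$, the right-hand side becomes $A^{(n,r)}_{0}(\vphi^0)$, and the residual $\gamma_{2r-n-1}E^{(n,n)}_{-1}((2r-n)/2)$ term is a scalar multiple of $A^{(n,r)}_{-1}(\vphi^0)$, hence lies in $\CR:=\mathrm{Im}(A^{(n,r)}_{-1})$. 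This establishes the asserted congruence for $\vphi=\vphi^0$.

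Finally I would promote this from $\vphi^0$ to an arbitrary $\vphi\in S(V^n(\BA))^{\circ}$. As recorded in Section \ref{Section 3.2}, every Laurent coefficient $B^{(n,r)}_{d}$ of the regularized theta integral is $G(\BA)$-intertwining (because $\theta(gg',h;\omega(z)\psi)=\theta(g,h;\omega(z)\omega(g')\psi)$), and likewise $B^{(n,n-r)}_{0}$; since $\mathrm{Ik}$ is $G(\BA)$-intertwining (Section \ref{Section 2.6}), so is $B^{(n,n-r)}_{0}\circ\mathrm{Ik}$. The map $A^{(n,r)}_{0}$ fails to be $G(\BA)$-intertwining only because of the factor $|a(g)|^{s-\rho_{n,r}}$ in $\Phi_{\vphi}$, whose expansion around $s=\rho_{n,r}$ contributes a $\log|a(g)|$ term proportional to $A^{(n,r)}_{-1}(\vphi)$, so $A^{(n,r)}_{0}$ is $G(\BA)$-intertwining modulo $\CR$. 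Reading the $\vphi^0$-identity as an equality in $\CA(G)/\CR$ and replacing $\vphi^0$ by $\omega(g)\vphi^0$ for $g\in G(\BA)$, these equivariances force the same congruence for every element of the $G(\BA)$-span of $\vphi^0$, which is the claim. The only genuinely delicate point is the middle step: one must verify that the accumulated zeta factors simplify to precisely the displayed constant and that the coefficient of $B^{(n,r)}_{-1}$ is exactly $1$; this is routine bookkeeping but is where an error would most easily enter.
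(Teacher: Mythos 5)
Your proposal is correct and follows essentially the same route as the paper: specialize Theorem \ref{Thm 4.8} at $j=2r-n-1$, convert the four spherical Eisenstein terms into $B^{(n,r)}_{-1}(\vphi^0)$, $B^{(n,n-r)}_0(\mathrm{Ik}(\vphi^0))$, $A^{(n,r)}_0(\vphi^0)$ and $A^{(n,r)}_{-1}(\vphi^0)$ via Lemma \ref{Lem 3.4} and $\Phi_{\vphi^0}=\vphi^0(0)f^0$, collapse the zeta and discriminant factors using the explicit values of $F$, $G$, $c_r$, $\vphi^0(0)$ and the telescoping product, and then propagate to all of $S(V^n(\BA))^{\circ}$ using that $B^{(n,r)}_{-1}$ and $B^{(n,n-r)}_0\circ\mathrm{Ik}$ are $G(\BA)$-intertwining while $A^{(n,r)}_0$ is intertwining modulo $\mathrm{Im}(A^{(n,r)}_{-1})$. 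The only part the paper carries out that you defer as ``routine bookkeeping'' is the explicit cancellation of constants, which the paper does verify and which indeed yields coefficient $1$ on $B^{(n,r)}_{-1}$ and the displayed constant on the complementary term.
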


\section{Inner product formulae}\label{Section 6}
In this section we consider the inner product formulae for theta lifts from unitary groups.

\subsection{Doubling method}
Let $W$ be the space of row vectors $M_{1,n}(E)$ with a non-degenerate skew-hermitian form $\pair{~,~}_W$. Let $-W$ be the space $M_{1,n}(E)$ with a skew-hermtian form which is negative to the form on $W$, i.e. $\pair{x,y}_{-W}=-\pair{x,y}_W$ for $x,y\in M_{1,n}(E)$. Then we can regard $\CW$ as the direct sum $W\oplus(-W)$, with form given by $\pair{(w_1,w_2),(w_1',w_2')}=\pair{w_1,w_1'}_W+\pair{w_2,w_2'}_{-W}$ for $w_1,w_1'\in W$ and $w_2,w_2'\in -W$.

There is a natural embedding
$$i: U(W)\times U(-W) \ra U(\CW),$$
given by $(w_1,w_2)\cdot i(g_1,g_2)=(w_1 g_1, w_2 g_2)$. Note that the group acts on the space on the right.

We have the following important result due to Piatetski-Shapiro--Rallis \cite{PSR} and Li \cite{Li}, which relates the $L$-function with an integral of Siegel Eisenstein series. We write $I_n(s)=I^{n,n}(s)$ for simplicity.
\begin{thm}\label{Thm 6.1}
[Doubling method]
Let $\pi=\otimes' \pi_v$ be a cuspidal automorphic representation of $G(\BA)$, where $G=U(W)$. Suppose $\Phi(s)=\otimes' \Phi_v(s) \in I_n(s)$ and $f_1=\otimes' f_{1,v}, f_2=\otimes' f_{2,v} \in \pi$ are factorizable. Then
$$ \begin{aligned}
&\quad \int_{[G\times G]}f_1(g_1)\overline{f_2(g_2)}E^{(n,n)}(i(g_1,g_2),s;\Phi)dg_1 dg_2\\
&=\frac{1}{d_n^S(s)}L^S(s+\frac{1}{2},\pi) \prod_{v\in S}Z_v(s,f_{1,v},f_{2,v},\Phi_v,\pi_v),
\end{aligned}$$
where
\begin{itemize}
\item $S$ is any finite set of places of $F$ such that if $v\notin S$, then $v$ is finite and everything is unramified at $v$;
\item $$Z_v(s,f_{1,v},f_{2,v},\Phi_v,\pi_v)=\int_{G_v}\Phi_v(i(g,1),s)\pair{\pi_v(g)f_{1,v},f_{2,v}}dg;$$
\item $$d_n^S(s)=\prod_{v\notin S}d_{n,v}(s),$$ where $d_{n,v}(s)=\prod_{0 \leq i \leq n-1}L_{F_v}(2s+n-i,\eta_v^i)$ for $v\notin S$;
\item $$L^S(s,\pi)=\prod_{v\notin S}L_v(s,\pi_v),$$ where $L_v(s,\pi_v)=L_v(s,BC(\pi_v))$ for $v\notin S$, and $BC(\pi_v)$ is the base change of $\pi_v$ to $GL_n(E_v)$.
\end{itemize}
\end{thm}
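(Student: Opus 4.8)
The plan is to follow the doubling method of Piatetski-Shapiro--Rallis, carrying out the unramified local computation as in Li \cite{Li}.

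\textbf{Step 1 (Unfolding the Eisenstein series).} First I would take $\Re(s)\gg 0$, where $E^{(n,n)}(\cdot,s;\Phi)$ converges absolutely; since $f_1,f_2$ are cuspidal and hence rapidly decreasing on $[G]$, I may interchange summation and integration. Writing
\[
E^{(n,n)}(i(g_1,g_2),s;\Phi)=\sum_{\gamma\in P(F)\bs U(\CW)(F)}\Phi(\gamma\, i(g_1,g_2),s),
\]
the integral becomes a sum over the double cosets $P(F)\bs U(\CW)(F)/i(U(W)\times U(-W))(F)$, where $P$ is the Siegel parabolic of $U(\CW)$.

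\textbf{Step 2 (Orbit analysis and cuspidality).} The double cosets are indexed by a Witt-type invariant, namely the dimension of the intersection of the maximal isotropic subspace stabilized by a conjugate of $P$ with the summand $W$; there is a unique \emph{open} orbit. Using the splitting $\CW=W\oplus(-W)$, I note that the diagonal $W^{\Delta}=\{(w,w):w\in W\}$ and the anti-diagonal are complementary Lagrangians, and I would fix a representative $\delta_0\in U(\CW)(F)$ of the open orbit so that
\[
\delta_0^{-1}P(F)\delta_0\cap i(U(W)\times U(-W))(F)=G^{\Delta}(F),\qquad G^{\Delta}=i(\{(g,g):g\in U(W)\}),
\]
the stabilizer of $W^{\Delta}$ being exactly the diagonal copy of $G=U(W)$. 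For every non-open orbit the stabilizer in $U(W)\times U(-W)$ contains, after projection to a factor, the unipotent radical of a proper $F$-parabolic of $G$; hence the corresponding term, integrated against $f_1$ or $\overline{f_2}$, vanishes by cuspidality. Thus only the open orbit survives.

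\textbf{Step 3 (Collapse to the global zeta integral).} Since $G^{\Delta}(F)\bs(G\times G)(F)$ maps bijectively onto the open orbit in $P(F)\bs U(\CW)(F)$, the surviving contribution is
\[
\int_{G^{\Delta}(F)\bs(G\times G)(\BA)} f_1(g_1)\overline{f_2(g_2)}\,\Phi(\delta_0\, i(g_1,g_2),s)\,dg_1\,dg_2.
\]
Parametrizing $(g_1,g_2)=i(h,h)\,i(g,1)$ with $h\in[G]$ and $g\in G(\BA)$, and using $i(hg,h)=i(h,h)i(g,1)$, I would write $\delta_0 i(h,h)\delta_0^{-1}=p_h\in P(\BA)$; because $h$ is an isometry its determinant is a norm-one idele, so $|a(p_h)|=1$ and $\Phi(\delta_0 i(hg,h),s)=\Phi(\delta_0 i(g,1),s)$. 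The $h$-integral then produces the Petersson matrix coefficient, leaving the global doubling integral
\[
Z(s,f_1,f_2,\Phi)=\int_{G(\BA)}\Phi(\delta_0\, i(g,1),s)\,\pair{\pi(g)f_1,f_2}\,dg.
\]

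\textbf{Step 4 (Factorization and the unramified identity).} Because $\Phi=\otimes'\Phi_v$ and $f_i=\otimes'f_{i,v}$ are factorizable and the matrix coefficient $\pair{\pi(g)f_1,f_2}$ factorizes accordingly, $Z(s)=\prod_v Z_v(s,f_{1,v},f_{2,v},\Phi_v,\pi_v)$ (with $\delta_0$ absorbed into the local normalization). The decisive step is the unramified computation: for $v\notin S$, with spherical data, one shows
\[
Z_v(s,f_{1,v},f_{2,v},\Phi_v,\pi_v)=\frac{L_v(s+\tfrac{1}{2},\pi_v)}{d_{n,v}(s)},
\]
where $L_v(s,\pi_v)=L_v(s,BC(\pi_v))$ and $d_{n,v}(s)=\prod_{0\le i\le n-1}L_{F_v}(2s+n-i,\eta_v^i)$; this is Li's basic identity, obtained by expanding the spherical section over the Iwasawa decomposition and evaluating the resulting unramified integral by the Macdonald/Gindikin--Karpelevich formula. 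Collecting the places $v\notin S$ into $L^S(s+\tfrac12,\pi)/d_n^S(s)$ and retaining the finitely many $v\in S$ gives the asserted identity, the meromorphic continuation in $s$ being inherited from that of $E^{(n,n)}(\cdot,s;\Phi)$ together with the rationality of the local integrals. The hard part is precisely this unramified matching of $Z_v$ with $L_v/d_{n,v}$, which rests on a delicate spherical computation; the orbit classification and the cuspidal vanishing in Step 2 require careful bookkeeping of stabilizers but are structurally routine.
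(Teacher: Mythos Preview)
Your proposal is correct and follows essentially the same approach as the paper: the paper simply invokes the basic identity of Piatetski-Shapiro--Rallis (\cite{PSR}, p.~3) for your Steps~1--3 and then cites Li \cite{Li}, Theorem~3.1, for the unramified local computation in your Step~4. You have merely unpacked what those citations contain (orbit analysis, cuspidal vanishing of non-open orbits, collapse to the diagonal), so the route is the same, just spelled out in more detail.
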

\begin{proof}
We have, by the basic identity of Piatetski-Shapiro and Rallis (\cite{PSR} p.3), that for $\Re(s)$ sufficiently large,
$$ \begin{aligned}
&\quad \int_{[G\times G]}f_1(g_1)\overline{f_2(g_2)}E^{(n,n)}(i(g_1,g_2),s;\Phi_{\vphi})dg_1 dg_2\\
&=\int_{G(\BA)}\Phi_{\vphi}(i(g,1),s)\pair{\pi(g)f_1,f_2}dg.
\end{aligned}$$
But we have Euler product
$$ \int_{G(\BA)}\Phi_{\vphi}(i(g,1),s)\pair{\pi(g)f_1,f_2}dg=\prod_v Z_v(s,f_{1,v},f_{2,v},\Phi_v,\pi_v).$$

The local zeta integral $Z_v(s,f_{1,v},f_{2,v},\Phi_v,\pi_v)$ in the unramified case has been computed by J-S.Li \cite{Li}, and his result is as follows.
\begin{prop}
Let $v\nmid 2$ be a finite place of $F$. Suppose $E_v/F_v$ is unramified if $v$ is inert, $\pi_v$ is unramified, $f_{1,v}^0$ and $f_{2,v}^0$ are $G(\CO_v)$-fixed vectors in $\pi_v$ with $\pair{f_{1,v}^0, f_{2,v}^0}=1$, and $\Phi_v^0$ is the spherical section in $I_{n,v}(s)$ with $\Phi_v^0(1)=1$.
Then
$$\begin{aligned}
& Z_v(s, f_{1,v}^0, f_{2,v}^0, \Phi_v^0, \pi_v)=\frac{L_v(s+\frac{1}{2},\pi_v)}{d_{n,v}(s)},
\end{aligned}$$
where
$$ d_{n,v}(s)=\prod_{0 \leq i \leq n-1}L_{F_v}(2s+n-i,\eta_v^i).$$
\end{prop}
\begin{proof}
See \cite{Li} Theorem 3.1. See also \cite{LR} $\S 7$, Proposition 3 and Remark 3.
\end{proof}

For a finite place $v$ of $F$, we say {\sl everything is unramified at $v$} if all the conditions in the above proposition are satisfied.
Let $S$ be a finite set of places of $F$ such that if $v\notin S$, then $v$ is finite and everything is unramified at $v$. Then
$$ \begin{aligned}
&\quad \int_{[G\times G]}f_1(g_1)\overline{f_2(g_2)}E^{(n,n)}(i(g_1,g_2),s;\Phi)dg_1 dg_2\\
&=\prod_{v\notin S}\frac{L_v(s+\frac{1}{2},\pi_v)}{d_{n,v}(s)} \prod_{v\in S}Z_v(s,f_{1,v},f_{2,v},\Phi_v,\pi_v)\\
&=\frac{L^S(s+\frac{1}{2},\pi)}{d_n^S(s)}\prod_{v\in S}Z_v(s,f_{1,v},f_{2,v},\Phi_v,\pi_v).
\end{aligned}$$
\end{proof}

\subsection{Inner product formulae}
First we introduce some notation. Let $W$ be a skew-hermitian space over $E$ of dimension $n$. Let $V$ be the split hermitian space over $E$ of dimension $2r$ as before. Then $U(W)$ and $U(V)$ form a dual pair. Let $V=V^{+}+V^{-}$ be a complete polarization (i.e. a direct sum of two totally isotropic subspaces of rank $r$). Write $X=V^{+}\otimes W$. Then the Weil representation $\omega$ of $U(W)(\BA)\times U(V)(\BA)$ acts on the Schwartz space $\CS(X(\BA))$. In fact, to construct Weil representation of $U(W)(\BA)\times U(V)(\BA)$, we need two auxiliary Hecke characters $\chi_1$ and $\chi_2$ of $E$ (see \cite{HKS,K1}). For simplicity, we choose the characters as follows: we take $\chi_1=1$ to be the trivial Hecke character of $E$ which is used to construct the Weil representation of $U(W)(\BA)$ on $\CS(X(\BA))$, and $\chi_2$ a Hecke character of $E$ such that $\chi_2|_{\BA^{\times}}=\eta^n$ which is used to construct the Weil representation of $U(V)(\BA)$ on $\CS(X(\BA))$.

Note that the Weil representation of $U(\CW)(\BA) \times U(V)(\BA)$ acts on the space $\CS(V^n(\BA))$.
There is a $U(W)(\BA)\times U(W)(\BA)$-intertwining map
$$\sigma: \CS(X(\BA))\hat{\otimes} \CS(X(\BA)) \ra \CS(V^n(\BA)),$$
where we view $\CS(V^n(\BA))$ as a representation of $U(W)(\BA)\times U(W)(\BA)$ via the embedding
$$i: U(W)\times U(-W) \ra U(\CW).$$

Now we consider the global theta lifts from $G=U(W)$ to $H=U(V)$.

Let $\pi$ be a cuspidal automorphic representation of $U(W)(\BA)$. The global {\sl theta lift} of $\pi$ to $U(V)$ is the space
$$\Theta_{2r}(\pi)=\{\theta_{2r}(f,\phi)|f\in \pi, \phi \in \CS(X(\BA))\},$$
where $\theta_{2r}(f,\phi)$ is the function on $H(\BA)$ given by
$$\theta_{2r}(f,\phi)(h)=\int_{[G]}\theta(g,h;\phi)f(g)dg, \quad \text{for $h\in H(\BA)$},$$
and
$$\theta(g,h;\phi)=\sum_{x\in X(F)}\omega(g,h)\phi(x)$$
is the theta kernel function. We call $\theta_{2r}(f,\phi)$ the theta lift of $f$ to $U(V)(\BA)$ (with respect to $\phi$). \\

There are two important properties of the global theta lifts:
\begin{itemize}
\item If $\Theta_{2r_0}(\pi)\neq 0$ for some $r_0$, then $\Theta_{2r}(\pi)\neq 0$ for all $r\geq r_0$ (tower property).
\item If $r$ is the smallest integer with $\Theta_{2r}(\pi)\neq 0$, then $\Theta_{2r}(\pi)$ is in the space of cusp forms.
\end{itemize}

If $\Theta_{2r}(\pi)$ is in the space of cusp forms, then we can consider the inner product
$$\pair{\theta_{2r}(f_1,\phi_1), \theta_{2r}(f_2,\phi_2)}:=\int_{[H]}\theta_{2r}(f_1,\phi_1)(h)\overline{\theta_{2r}(f_2,\phi_2)(h)}dh$$
for $f_1, f_2\in \pi$ and $\phi_1, \phi_2\in \CS(X(\BA))$.

If $\pair{\theta_{2r}(f_1,\phi_1), \theta_{2r}(f_2,\phi_2)}\neq 0$ for some choice of $f_1, f_2 \in \pi$ and $\phi_1, \phi_2\in \CS(X(\BA))$, then $\Theta_{2r}(\pi)\neq 0$. This gives a simple way to determine the non-vanishing of global theta lifts. S.Rallis \cite{R} first used this idea to study global theta lift from a symplectic group to an orthogonal group in the convergent range, and he found a formula which relates the inner product with special values of the partial L-function. Accordingly, this type of formula is often called {\sl Rallis inner product formula}. J-S. Li \cite{Li} extended the Rallis inner product formula to a general dual pair of type I in the convergent range.

In our case, we have the following expression of the inner product (see \cite{GT} Proposition 6.1 for the orthogonal case).

\begin{prop}\label{Prop 6.3}
Suppose $n\geq r$. Let $\pi$ be a cuspidal automorphic representation of $U(W)(\BA)$ with $\Theta_{2(r-1)}(\pi)=0$, so that $\Theta_{2r}(\pi)$ is in the space of cusp forms (possibly zero). Let $f_1, f_2\in \pi$.

(i) For $\phi_1, \phi_2 \in \CS(X(\BA))$, the inner product $\pair{\theta_{2r}(f_1,\phi_1), \theta_{2r}(f_2,\phi_2)}$ is given by
$$\begin{aligned}
&\quad \pair{\theta_{2r}(f_1,\phi_1), \theta_{2r}(f_2,\phi_2)}\\
&=\Res_{s=\frac{r}{2}}\int_{[G\times G]}f_1(g_1)\overline{f_2(g_2)}\CE^{(n,r)}(i(g_1,g_2),s;\sigma(\phi_1\otimes \overline{\phi_2}))dg_1 dg_2\\
&=\int_{[G\times G]}f_1(g_1)\overline{f_2(g_2)}B^{(n,r)}_{-1}(\sigma(\phi_1\otimes \overline{\phi_2}))(i(g_1,g_2))dg_1 dg_2.
\end{aligned}$$
(ii) More generally, for $\vphi=\sum_{i}\sigma(\phi_{1,i}\otimes \overline{\phi_{2,i}})\in \CS(V^n(\BA))$ with $\phi_{1,i}, \phi_{2,i}\in \CS(X(\BA))$, we have
$$\begin{aligned}
&\quad \sum_{i}\pair{\theta_{2r}(f_1, \phi_{1,i}), \theta_{2r}(f_2,\phi_{2,i})}\\
&=\int_{[G\times G]}f_1(g_1)\overline{f_2(g_2)}B^{(n,r)}_{-1}(\vphi)(i(g_1,g_2))dg_1 dg_2.
\end{aligned}$$
\end{prop}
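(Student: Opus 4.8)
The plan is a seesaw-plus-regularization argument, following the template of \cite{GT} for the orthogonal case. Since $\Theta_{2(r-1)}(\pi)=0$, the lift $\Theta_{2r}(\pi)$ lies in the space of cusp forms, so for $f_1,f_2\in\pi$ and $\phi_1,\phi_2\in\CS(X(\BA))$ the functions $\theta_{2r}(f_i,\phi_i)$ are rapidly decreasing on $[H]$ and the left-hand side of (i) is an absolutely convergent integral over $[H]$. Put $\vphi=\sigma(\phi_1\otimes\overline{\phi_2})$ and $\Psi(h)=\theta_{2r}(f_1,\phi_1)(h)\,\overline{\theta_{2r}(f_2,\phi_2)(h)}$, so that $\pair{\theta_{2r}(f_1,\phi_1),\theta_{2r}(f_2,\phi_2)}=\int_{[H]}\Psi(h)\,dh$. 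First I would unfold the two theta lifts, interchange the absolutely convergent integrations, and invoke the defining seesaw property of the map $\sigma$, namely $\theta(i(g_1,g_2),h;\vphi)=\theta(g_1,h;\phi_1)\,\overline{\theta(g_2,h;\phi_2)}$, to get $\Psi(h)=\int_{[G\times G]}f_1(g_1)\overline{f_2(g_2)}\,\theta(i(g_1,g_2),h;\vphi)\,dg_1\,dg_2$.

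The heart of the proof is the identity, valid for $\Re(s)\gg0$,
$$\int_{[G\times G]}f_1(g_1)\overline{f_2(g_2)}\,\CE^{(n,r)}(i(g_1,g_2),s;\vphi)\,dg_1\,dg_2=\frac{1}{\kappa}\int_{[H]}\Psi(h)E(h,s)\,dh.$$
To prove it I start from the definition in Section~\ref{Section 3.2}, $\CE^{(n,r)}(g,s;\vphi)=\frac{1}{\kappa P_z(s)}\int_{[H]}\theta(g,h;\omega(z)\vphi)E(h,s)\,dh$. Because $\theta(g,\cdot;\omega(z)\vphi)$ is rapidly decreasing on $[H]$ and $f_1,f_2$ are cuspidal, the resulting double integral over $[G\times G]\times[H]$ is absolutely convergent, so $\int_{[G\times G]}$ may be moved inside; the inner integral $\int_{[G\times G]}f_1\overline{f_2}\,\theta(i(g_1,g_2),h;\omega(z)\vphi)\,dg_1\,dg_2$ then equals $(z'\cdot\Psi)(h)$, by the Hecke-operator compatibility $\theta(g,h;\omega(z)\vphi)=(z'\cdot\theta(g,-;\vphi))(h)$ (see \cite{KR2}~\S5.5 and \cite{Ich}~\S2) together with the seesaw identity of the previous paragraph. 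Since $\Psi$ is rapidly decreasing, the self-adjointness of $z'$ and the relation $z'*E(h,s)=P_z(s)E(h,s)$ now legitimately turn $\frac{1}{\kappa P_z(s)}\int_{[H]}(z'\cdot\Psi)(h)E(h,s)\,dh$ into $\frac{1}{\kappa}\int_{[H]}\Psi(h)E(h,s)\,dh$. Finally, $E(h,s)$ has a simple pole at $s=\rho_r=r/2$ with constant residue $\kappa$, so taking $\Res_{s=\rho_r}$ term by term in the Laurent expansion — legitimate because the rapid decay of $\Psi$ dominates the moderate growth of $E(h,s)$ uniformly near $\rho_r$ — gives
$$\Res_{s=\rho_r}\int_{[G\times G]}f_1(g_1)\overline{f_2(g_2)}\,\CE^{(n,r)}(i(g_1,g_2),s;\vphi)\,dg_1\,dg_2=\int_{[H]}\Psi(h)\,dh=\pair{\theta_{2r}(f_1,\phi_1),\theta_{2r}(f_2,\phi_2)},$$
which is the first equality of (i).

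For the second equality of (i), substitute the Laurent expansion $\CE^{(n,r)}(g,s;\vphi)=\sum_{d\ge-2}B^{(n,r)}_d(\vphi)(g)\,(s-\rho_r)^d$ and integrate against the cusp forms $f_1\overline{f_2}$ term by term; the identity just proved shows the integrated series has at most a simple pole at $\rho_r$, so the $B^{(n,r)}_{-2}$-term integrates to zero and the coefficient of $(s-\rho_r)^{-1}$, namely $\int_{[G\times G]}f_1\overline{f_2}\,B^{(n,r)}_{-1}(\vphi)(i(g_1,g_2))\,dg_1\,dg_2$, equals the residue. Part (ii) then follows from (i) by the linearity of $\sigma$ and of $B^{(n,r)}_{-1}$: summing the identity of (i) over the pairs $(\phi_{1,i},\phi_{2,i})$ and collecting terms gives
$$\sum_i\pair{\theta_{2r}(f_1,\phi_{1,i}),\theta_{2r}(f_2,\phi_{2,i})}=\int_{[G\times G]}f_1(g_1)\overline{f_2(g_2)}\,B^{(n,r)}_{-1}(\vphi)(i(g_1,g_2))\,dg_1\,dg_2,$$
with $\vphi=\sum_i\sigma(\phi_{1,i}\otimes\overline{\phi_{2,i}})$.

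The main obstacle will be the careful justification of the interchanges of integration, above all moving $\int_{[G\times G]}$ inside $\int_{[H]}$: this is exactly where the regularization earns its keep, since the naive theta kernel $\theta(g,\cdot;\vphi)$ is not integrable on $[H]$ whereas $\theta(g,\cdot;\omega(z)\vphi)$ is, and one must check its decay is uniform enough to absorb the moderate growth of $f_1,f_2$ along $[G\times G]$. The other delicate input is the Hecke-operator compatibility between $\omega(z)$ on $\CS(V^n(\BA))$ and $z'$ on $\CA(H)$; granting it, the normalizing constants take care of themselves, with $z'*E(h,s)=P_z(s)E(h,s)$ cancelling the factor $P_z(s)^{-1}$ and $\Res_{s=\rho_r}E(h,s)=\kappa$ cancelling $\kappa^{-1}$, so that the constant in (i) is exactly $1$ — consistent with the normalization $c_K=\kappa$.
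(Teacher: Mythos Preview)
Your proposal is correct and follows essentially the same approach as the paper: both arguments substitute the definition of $\CE^{(n,r)}$, interchange integrals using the rapid decay of $\theta(g,\cdot;\omega(z)\vphi)$, invoke the seesaw identity $\theta(i(g_1,g_2),h;\vphi)=\theta(g_1,h;\phi_1)\overline{\theta(g_2,h;\phi_2)}$ (which the paper attributes to Poisson summation), use the self-adjointness of $z'$ together with $z'*E(h,s)=P_z(s)E(h,s)$, and then take the residue at $s=\rho_r$. You are in fact slightly more careful than the paper on one point: when $r+1\le n\le 2r-1$ the regularized theta integral has a double pole, so the passage from $\Res_{s=\rho_r}\int$ to $\int B^{(n,r)}_{-1}$ requires knowing that $\int f_1\overline{f_2}\,B^{(n,r)}_{-2}=0$, which you correctly deduce from the fact that the right-hand side $\kappa^{-1}\int_{[H]}\Psi(h)E(h,s)\,dh$ has only a simple pole.
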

\begin{proof}
Note that (ii) follows from (i) immediately. So we only need to prove (i). Note that $E(h,s)$ has a constant residue $\kappa$ at $s=r/2$.
Write $\sigma(\phi_1\otimes \overline{\phi_2})=\vphi$. Then
$$\begin{aligned}
&\quad \int_{[G\times G]}f_1(g_1)\overline{f_2(g_2)}\CE^{(n,r)}(i(g_1,g_2),s;\vphi)dg_1 dg_2\\
&=\frac{1}{\kappa P_z(s)}\int_{[G\times G]}f_1(g_1)\overline{f_2(g_2)} \left(\int_{[H]}\theta(i(g_1,g_2),h;\omega(z)\vphi)E(h,s)dh\right)dg_1 dg_2\\
&=\frac{1}{\kappa P_z(s)}\int_{[H]}\left( \int_{[G\times G]}f_1(g_1)\overline{f_2(g_2)} \theta(i(g_1,g_2),h;\omega(z)\vphi)dg_1 dg_2\right)E(h,s)dh\\
&=\frac{1}{\kappa P_z(s)}\int_{[H]}\left( \int_{[G\times G]}f_1(g_1)\overline{f_2(g_2)} \theta(i(g_1,g_2),h;\vphi)dg_1 dg_2\right)z' *E(h,s)dh\\
&=\frac{1}{\kappa} \int_{[H]}\left( \int_{[G\times G]}f_1(g_1)\overline{f_2(g_2)} \theta(i(g_1,g_2),h;\vphi)dg_1 dg_2\right) E(h,s)dh\\
&=\frac{1}{\kappa}\int_{[H]} \theta_{2r}(f_1,\phi_1)(h)\overline{\theta_{2r}(f_2,\phi_2)(h)}E(h,s)dh,
\end{aligned}$$
where we have used the Poisson summation formula to show $\theta(i(g_1,g_2),h;\vphi)=\theta(g_1,h;\phi_1)\theta(g_2,h;\overline{\phi_2})$ for $\vphi=\sigma(\phi_1\otimes \overline{\phi_2})$, and have used the adjointness of the operator $z'$ and the fact that $z' * E(h,s)=P_z(s) E(h,s)$.

Taking residue at $s=r/2$ in both sides of the above identity, we obtain
$$\begin{aligned}
&\quad \int_{[H]} \theta_{2r}(f_1,\phi_1)(h)\overline{\theta_{2r}(f_2,\phi_2)(h)}dh\\
&=\int_{[G\times G]}f_1(g_1)\overline{f_2(g_2)}B^{(n,r)}_{-1}(\vphi)(i(g_1,g_2)dg_1 dg_2,
\end{aligned}$$
which is just the desired result.
\end{proof}

Now we study the inner product formulae. First we consider the boundary case $n=2r$.

\begin{thm}\label{Thm 6.4}
[Inner product formula on the boundary]
Suppose $n=2r$. Let $\pi$ be a cuspidal automorphic representation of $U(W)(\BA)$ with $\Theta_{2(r-1)}(\pi)=0$. Let $\vphi \in S(V^n(\BA))$ be such that $\Phi_{\vphi}$ is factorizable as $\otimes' \Phi_v$. Write $\vphi=\sum_i \sigma(\phi_{1,i}\otimes \overline{\phi_{2,i}})$ with $\phi_{1,i}, \phi_{2,i}\in \CS(X(\BA))$. Then for $f_1, f_2\in \pi$, we have
$$\begin{aligned}
&\quad \sum_i \pair{\theta_{2r}(f_1,\phi_{1,i}), \theta_{2r}(f_2, \phi_{2,i})}\\
&=\frac{1}{2}\int_{[G\times G]}f_1(g_1)\overline{f_2(g_2)}A^{(2r,r)}_0(\vphi)(i(g_1,g_2))dg_1 dg_2\\
&=\frac{1}{2d_{2r}^S(0)}\mathrm{Val}_{s=0}\left(L^S(s+\frac{1}{2},\pi)\prod_{v\in S}Z_v(s,f_{1,v},f_{2,v},\Phi_v,\pi_v)\right),
\end{aligned}$$
where $\mathrm{Val}_{s=0}$ means the constant term of the Laurent expansion at $s=0$, and $S$ can be chosen to be any finite set of places of $F$ such that if $v\notin S$, then $v$ is finite and everything is unramified at $v$.
\end{thm}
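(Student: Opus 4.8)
The plan is to assemble three facts already established: the integral representation of the theta inner product (Proposition~\ref{Prop 6.3}), the first term identity of the regularized Siegel--Weil formula in the boundary case (Proposition~\ref{Prop 5.2}), and the doubling method (Theorem~\ref{Thm 6.1}). The structural observation that makes these fit together is that in the boundary case $n=2r$ one has $\rho_{2r,r}=(2r-n)/2=0$, and $0$ does not belong to the set $\{\,r-j:0\le j<r\,\}$ of poles of $E^{(2r,2r)}(g,s;\Phi_\vphi)$ produced by Tan's proposition; hence $E^{(2r,2r)}(g,s;\Phi_\vphi)$ is holomorphic at $s=0$, so that $A^{(2r,r)}_{-1}(\vphi)=0$ and $E^{(2r,2r)}(g,0;\Phi_\vphi)=A^{(2r,r)}_0(\vphi)(g)$.

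First I would establish the first equality. Since $\Theta_{2(r-1)}(\pi)=0$, Proposition~\ref{Prop 6.3}(ii) applies and gives
$$\sum_i \pair{\theta_{2r}(f_1,\phi_{1,i}),\theta_{2r}(f_2,\phi_{2,i})}=\int_{[G\times G]}f_1(g_1)\overline{f_2(g_2)}\,B^{(2r,r)}_{-1}(\vphi)(i(g_1,g_2))\,dg_1\,dg_2.$$
By Proposition~\ref{Prop 5.2} we have $B^{(2r,r)}_{-1}(\vphi)=\tfrac12\,A^{(2r,r)}_0(\vphi)$, and substituting this in yields the first equality.

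Next I would deduce the second equality from the doubling method. By sesquilinearity it suffices to treat factorizable $f_1,f_2$, and $\Phi_\vphi$ is factorizable by hypothesis, so Theorem~\ref{Thm 6.1} gives the identity of meromorphic functions
$$\int_{[G\times G]}f_1(g_1)\overline{f_2(g_2)}\,E^{(2r,2r)}(i(g_1,g_2),s;\Phi_\vphi)\,dg_1\,dg_2=\frac{1}{d^S_{2r}(s)}L^S\!\left(s+\tfrac12,\pi\right)\prod_{v\in S}Z_v(s,f_{1,v},f_{2,v},\Phi_v,\pi_v).$$
Since $f_1,f_2$ are cuspidal, hence rapidly decreasing, while $E^{(2r,2r)}(\,\cdot\,,s;\Phi_\vphi)$ has moderate growth uniformly for $s$ near $0$, the left-hand integral converges absolutely and is holomorphic near $s=0$, and its value at $s=0$ is $\int_{[G\times G]}f_1(g_1)\overline{f_2(g_2)}A^{(2r,r)}_0(\vphi)(i(g_1,g_2))\,dg_1\,dg_2$ by the remark in the first paragraph. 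On the right, $1/d^S_{2r}(s)$ is holomorphic and non-vanishing at $s=0$: the factors $L^S_F(2s+2r-i,\eta^i)$ with $i$ even have poles only at $s\le-\tfrac12$, those with $i$ odd are holomorphic near $s=0$, and $L^S_F(1,\eta)\ne0$. Hence $L^S(s+\tfrac12,\pi)\prod_{v\in S}Z_v(s,\dots)$ is itself holomorphic at $s=0$, and $\mathrm{Val}_{s=0}$ commutes with multiplication by the constant $1/d^S_{2r}(0)$. Evaluating both sides at $s=0$ and combining with the first equality gives the stated formula.

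I do not expect a genuine obstacle: the substance is entirely carried by Proposition~\ref{Prop 5.2} (first term identity on the boundary) and Theorem~\ref{Thm 6.1} (doubling), and the argument above is the bookkeeping that joins them. The two points that need a little care are (a) the absolute convergence of the doubling integral at $s=0$ and the legitimacy of interchanging evaluation at $s=0$ with the integration over $[G\times G]$, which follows from the rapid decay of cusp forms against the moderate growth of the Eisenstein series, and (b) the holomorphy and non-vanishing of $d^S_{2r}(s)$ near $s=0$, which is what permits pulling $\mathrm{Val}_{s=0}$ through the constant factor. It is worth stressing that the first term identity invoked here relates $A^{(2r,r)}_0$ rather than $A^{(2r,r)}_{-1}$ to $B^{(2r,r)}_{-1}$; this is peculiar to the boundary case and is precisely what accounts for the factor $\tfrac12$ in the final formula.
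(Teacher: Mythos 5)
Your proposal is correct and follows essentially the same route as the paper: Proposition~\ref{Prop 6.3} combined with the boundary first term identity $A^{(2r,r)}_0(\vphi)=2B^{(2r,r)}_{-1}(\vphi)$ of Proposition~\ref{Prop 5.2} gives the first equality, and the doubling method (Theorem~\ref{Thm 6.1}) evaluated at $s=\rho_{2r,r}=0$ gives the second. The extra justifications you supply (holomorphy of $E^{(2r,2r)}(g,s;\Phi_\vphi)$ at $s=0$ via Tan's result, and the non-vanishing of $d^S_{2r}(0)$) are sound and only make explicit what the paper leaves implicit.
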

\begin{proof}
We have
$$\begin{aligned}
&\quad \sum_i \pair{\theta_{2r}(f_1,\phi_{1,i}), \theta_{2r}(f_2, \phi_{2,i})}\\
&=\int_{[G\times G]}f_1(g_1)\overline{f_2(g_2)}B^{(2r,r)}_{-1}(\vphi)(i(g_1,g_2))dg_1 dg_2\\
&=\frac{1}{2} \int_{[G\times G]}f_1(g_1)\overline{f_2(g_2)}A^{(2r,r)}_{0}(\vphi)(i(g_1,g_2))dg_1 dg_2,
\end{aligned}$$
by Proposition \ref{Prop 6.3} and Proposition \ref{Prop 5.2}.

By the doubling method (Theorem \ref{Thm 6.1}), we have
$$\begin{aligned}
&\quad \int_{[G\times G]}f_1(g_1)\overline{f_2(g_2)}A^{(2r,r)}_0(\vphi)(i(g_1,g_2))dg_1 dg_2\\
&=\frac{1}{d_{n}^S(\rho_{n,r})}\mathrm{Val}_{s=\rho_{n,r}}\left(L^S(s+\frac{1}{2},\pi)\prod_{v\in S}Z_v(s,f_{1,v},f_{2,v},\Phi_v,\pi_v) \right).
\end{aligned}$$
Then the desired result follows.
\end{proof}

Next we consider the inner product formula for $n\geq 2r+1$.

\begin{thm}\label{Thm 6.5}
[Inner product formula for 1st term range]
Suppose $n\geq 2r+1$. Let $\pi$ be a cuspidal automorphic representation of $U(W)(\BA)$ with $\Theta_{2(r-1)}(\pi)=0$. Let $\vphi_c \in S(V_c^n(\BA))$ be such that $\Phi_{\vphi_c}$ is factorizable as $\otimes' \Phi_v$. Write $\mathrm{Ik}(\vphi_c)=\sum_i \sigma(\phi_{1,i}\otimes \overline{\phi_{2,i}})$ with $\phi_{1,i}, \phi_{2,i}\in \CS(X(\BA))$. Then for $f_1, f_2\in \pi$, we have
$$\begin{aligned}
&\quad \sum_i \pair{\theta_{2r}(f_1,\phi_{1,i}), \theta_{2r}(f_2, \phi_{2,i})}\\
&=\frac{1}{a_{n,r}d_{n}^S(-\rho_{n,r})}\Res_{s=-\rho_{n,r}}\left(L^S(s+\frac{1}{2},\pi)\prod_{v\in S}Z_v(s,f_{1,v},f_{2,v},\Phi_v,\pi_v)\right),
\end{aligned}$$
where $S$ can be chosen to be any finite set of places of $F$ such that if $v\notin S$, then $v$ is finite and everything is unramified at $v$.
\end{thm}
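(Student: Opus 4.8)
The plan is to derive this inner product formula by concatenating three results already established above: the expression of the theta-lift inner product as a doubling-type integral of the leading term $B^{(n,r)}_{-1}$ (Proposition~\ref{Prop 6.3}), the first term identity for the first term range (Proposition~\ref{Prop 5.1}), and the doubling method of Piatetski-Shapiro--Rallis and Li (Theorem~\ref{Thm 6.1}).

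First I would set $\vphi = \mathrm{Ik}(\vphi_c) = \sum_i \sigma(\phi_{1,i}\otimes\overline{\phi_{2,i}}) \in S(V^n(\BA))$. Since $\Theta_{2(r-1)}(\pi) = 0$ and $n > r$, Proposition~\ref{Prop 6.3}(ii) applies and rewrites $\sum_i \pair{\theta_{2r}(f_1,\phi_{1,i}),\theta_{2r}(f_2,\phi_{2,i})}$ as $\int_{[G\times G]} f_1(g_1)\overline{f_2(g_2)}\, B^{(n,r)}_{-1}(\mathrm{Ik}(\vphi_c))(i(g_1,g_2))\, dg_1\, dg_2$. Then by Proposition~\ref{Prop 5.1} I would replace $B^{(n,r)}_{-1}(\mathrm{Ik}(\vphi_c))$ by $a_{n,r}^{-1}\, A^{(n,n-r)}_{-1}(\vphi_c)$, and recall that, by definition, $A^{(n,n-r)}_{-1}(\vphi_c)$ is the residue at $s = \rho_{n,n-r} = (n-2r)/2 = -\rho_{n,r}$ of the Siegel Eisenstein series $E^{(n,n)}(g,s;\Phi_{\vphi_c})$. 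This turns the inner product into $a_{n,r}^{-1}$ times $\int_{[G\times G]} f_1(g_1)\overline{f_2(g_2)}\, \Res_{s=-\rho_{n,r}} E^{(n,n)}(i(g_1,g_2),s;\Phi_{\vphi_c})\, dg_1\, dg_2$.

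Next I would move the residue outside the integral over $[G\times G]$. For each fixed $(g_1,g_2)$ the function $s\mapsto E^{(n,n)}(i(g_1,g_2),s;\Phi_{\vphi_c})$ is meromorphic with at most a simple pole at $s = -\rho_{n,r} = n/2 - r$ (which lies in the set of possible poles of the Siegel Eisenstein series, since $0\leq r < n/2$ here); because $f_1,f_2$ are cuspidal and hence rapidly decreasing on $[G]$ while the Eisenstein series is of moderate growth uniformly on vertical strips, the integral converges absolutely and locally uniformly in $s$ away from $s=-\rho_{n,r}$, is therefore meromorphic there with at most a simple pole, and the residue commutes with the integral. Applying the doubling identity of Theorem~\ref{Thm 6.1} (with $\Phi = \Phi_{\vphi_c} = \otimes'\Phi_v$, extending bilinearly if $f_1,f_2$ are not themselves pure tensors) identifies $\int_{[G\times G]} f_1\overline{f_2}\, E^{(n,n)}(i(\cdot,\cdot),s;\Phi_{\vphi_c})$ with $d_n^S(s)^{-1}\, L^S(s+\tfrac12,\pi)\prod_{v\in S} Z_v(s,f_{1,v},f_{2,v},\Phi_v,\pi_v)$.

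The last step, and the only one requiring a genuine check rather than bookkeeping, is to verify that $d_n^S(s) = \prod_{v\notin S}\prod_{0\leq i\leq n-1} L_{F_v}(2s+n-i,\eta_v^i)$ is holomorphic and non-vanishing at $s = -\rho_{n,r}$; granting this, the factor $d_n^S(-\rho_{n,r})^{-1}$ pulls out of the residue and the stated formula drops out. At $s=-\rho_{n,r}$ the arguments $2s+n-i = 2n-2r-i$ with $0\leq i\leq n-1$ run over the integers from $n-2r+1$ to $2n-2r$, and the hypothesis $n\geq 2r+1$ forces each to be $\geq 2$; hence every partial $L$-factor appearing is given by an absolutely convergent Euler product in $\Re(\cdot) > 1$ and is finite and nonzero. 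I expect this is precisely where the distinction between $n\geq 2r+1$ and the boundary case $n=2r$ (handled separately in Theorem~\ref{Thm 6.4}) shows up: at $n=2r$ one of those arguments would equal $1$, producing a pole of $\zeta_F^S$ that has to be balanced against the use of $A^{(2r,r)}_0$ rather than $A^{(2r,r)}_{-1}$. Apart from this check and the standard convergence/interchange argument for doubling integrals against cusp forms, the proof is a formal chaining of Propositions~\ref{Prop 6.3} and~\ref{Prop 5.1} with Theorem~\ref{Thm 6.1}.
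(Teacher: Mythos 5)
Your proposal is correct and follows essentially the same route as the paper: chain Proposition \ref{Prop 6.3}, Proposition \ref{Prop 5.1}, and the doubling method (Theorem \ref{Thm 6.1}), then take the residue at $s=-\rho_{n,r}$. The extra checks you supply (interchanging the residue with the $[G\times G]$-integral, and the non-vanishing of $d_n^S(s)$ at $s=-\rho_{n,r}$ via $2n-2r-i\geq n-2r+1\geq 2$) are details the paper leaves implicit, and they are verified correctly.
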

\begin{proof}
We have
$$\begin{aligned}
&\quad \sum_i \pair{\theta_{2r}(f_1,\phi_{1,i}), \theta_{2r}(f_2, \phi_{2,i})}\\
&=\int_{[G\times G]}f_1(g_1)\overline{f_2(g_2)}B^{(n,r)}_{-1}(\mathrm{Ik}(\vphi_c))(i(g_1,g_2))dg_1 dg_2\\
&=\frac{1}{a_{n,r}}\int_{[G\times G]}f_1(g_1)\overline{f_2(g_2)}A^{(n,n-r)}_{-1}(\vphi_c)(i(g_1,g_2))dg_1 dg_2,
\end{aligned}$$
by Proposition \ref{Prop 6.3} and Proposition \ref{Prop 5.1}.

By the doubling method (Theorem \ref{Thm 6.1}), we have
$$\quad \begin{aligned}
&\quad \int_{[G\times G]}f_1(g_1)\overline{f_2(g_2)}A^{(n,n-r)}_{-1}(\vphi_c)(i(g_1,g_2))dg_1 dg_2\\
&=\frac{1}{d_n^S(-\rho_{n,r})}\Res_{s=-\rho_{n,r}}\left(L^S(s+\frac{1}{2},\pi)\prod_{v\in S}Z_v(s,f_{1,v},f_{2,v},\Phi_v,\pi_v) \right)
\end{aligned}$$
The desired result then follows.
\end{proof}

Finally, we consider the inner product formula for the second term range $r+1 \leq n \leq 2r-1$. First we have the following proposition (see \cite{GT} Proposition 6.3 and {\sl Notes added in proof}).

\begin{prop}\label{Prop 6.6}
Suppose $r+1\leq n \leq 2r-1$. Let $\pi$ be a cuspidal automorphic representation of $U(W)(\BA)$ with $\Theta_{2(r-1)}(\pi)=0$. Let $f_1, f_2\in \pi$.

(i) If $\vphi=\sum_{i}\sigma(\phi_{1,i}\otimes \overline{\phi_{2,i}})\in S(V^n(\BA))^{\circ}$ with $\phi_{1,i}, \phi_{2,i}\in \CS(X(\BA))$ , then
$$\begin{aligned}
&\quad \sum_{i}\pair{\theta_{2r}(f_1,\phi_{1,i}), \theta_{2r}(f_2,\phi_{2,i})}\\
&=\int_{[G\times G]}f_1(g_1)\overline{f_2(g_2)}A^{(n,r)}_0(\vphi)(i(g_1,g_2))dg_1 dg_2,
\end{aligned}$$
where we write $G=U(W)$.

(ii) More generally, if
$\vphi=\sum_{i}\sigma(\phi_{1,i}\otimes \overline{\phi_{2,i}})\in S(V^n(\BA))$ with $\phi_{1,i}, \phi_{2,i}\in \CS(X(\BA))$, then
$$\begin{aligned}
&\quad \sum_{i}\pair{\theta_{2r}(f_1,\phi_{1,i}), \theta_{2r}(f_2,\phi_{2,i})}\\
&=\int_{[G\times G]}f_1(g_1)\overline{f_2(g_2)}A^{(n,r)}_0(\vphi)(i(g_1,g_2))dg_1 dg_2.
\end{aligned}$$
\end{prop}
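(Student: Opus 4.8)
The plan is to follow Gan--Takeda's derivation of \cite{GT} Proposition~6.3. Throughout put, for $F\in\CA(G)$ with $G=U(W)$,
$$I[F]:=\int_{[G\times G]}f_1(g_1)\overline{f_2(g_2)}\,F(i(g_1,g_2))\,dg_1\,dg_2,$$
which converges because $f_1,f_2$ are cusp forms. First I would apply Proposition \ref{Prop 6.3}(ii): since $\Theta_{2(r-1)}(\pi)=0$, $\sum_i\pair{\theta_{2r}(f_1,\phi_{1,i}),\theta_{2r}(f_2,\phi_{2,i})}=I[B^{(n,r)}_{-1}(\vphi)]$. For part (i), where $\vphi\in S(V^n(\BA))^{\circ}$, I would then invoke the weak second term identity (Theorem \ref{Thm 5.4}) to write $B^{(n,r)}_{-1}(\vphi)=A^{(n,r)}_0(\vphi)-C_{n,r}\,B^{(n,n-r)}_0(\mathrm{Ik}(\vphi))+R$ with $R\in\mathrm{Im}(A^{(n,r)}_{-1})$ and $C_{n,r}$ the explicit constant of Theorem \ref{Thm 5.4}; applying $I$ reduces the assertion to two vanishing claims: (a) $I[B^{(n,n-r)}_0(\mathrm{Ik}(\vphi))]=0$, and (b) $I$ annihilates $\mathrm{Im}(A^{(n,r)}_{-1})$. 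For part (ii), where $\vphi\in S(V^n(\BA))$ is arbitrary, the same scheme applies once Theorem \ref{Thm 5.4} is replaced by the full second term identity of Gan--Qiu--Takeda \cite{GQT}, which holds for all $\vphi$ with error term again in $\mathrm{Im}(A^{(n,r)}_{-1})$.

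For (a) I would use the theta tower. Since $r+1\le n\le 2r-1$ we have $n-r\le r-1$, so the tower property together with $\Theta_{2(r-1)}(\pi)=0$ gives $\Theta_{2(n-r)}(\pi)=0$ and $\Theta_{2(n-r-1)}(\pi)=0$; hence every theta lift of $\pi$ to $U(V_c)$ is a cusp form and in fact vanishes identically. Writing $\mathrm{Ik}(\vphi)=\sum_j\sigma_c(\phi'_{1,j}\otimes\overline{\phi'_{2,j}})$ with $\phi'_{k,j}$ in the Schwartz space attached to $V_c$, and repeating the unfolding in the proof of Proposition \ref{Prop 6.3} for the dual pair $(U(\CW),U(V_c))$, I get the meromorphic identity
$$I[\CE^{(n,n-r)}(\cdot,s;\mathrm{Ik}(\vphi))]=\kappa_c^{-1}\!\int_{[U(V_c)]}\Bigl(\sum_j\theta_{2(n-r)}(f_1,\phi'_{1,j})(h)\,\overline{\theta_{2(n-r)}(f_2,\phi'_{2,j})(h)}\Bigr)E_c(h,s)\,dh\equiv 0,$$
so all of its Laurent coefficients at $s=(n-r)/2$ vanish; in particular (a).

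For (b) I would use the doubling method. Write an element of $\mathrm{Im}(A^{(n,r)}_{-1})$ as $A^{(n,r)}_{-1}(\psi)=\Res_{s=\rho_{n,r}}E^{(n,n)}(\cdot,s;\Phi_\psi)$, and, by linearity, reduce to $\Phi_\psi$ factorizable. Interchanging the Laurent expansion with $I$ (legitimate by moderate growth against rapidly decreasing $f_1,f_2$) and invoking Theorem \ref{Thm 6.1},
$$I[A^{(n,r)}_{-1}(\psi)]=\Res_{s=\rho_{n,r}}I[E^{(n,n)}(\cdot,s;\Phi_\psi)]=\Res_{s=\rho_{n,r}}\Bigl(\frac{L^S(s+\tfrac12,\pi)}{d_n^S(s)}\prod_{v\in S}Z_v(s,f_{1,v},f_{2,v},\Phi_{\psi,v},\pi_v)\Bigr).$$
The point is that, because $\Theta_{2(r-1)}(\pi)=0$, the doubling integral $s\mapsto I[E^{(n,n)}(\cdot,s;\Phi)]$ is holomorphic at $s=\rho_{n,r}=(2r-n)/2$ (equivalently the normalized doubling $L$-function has no pole there): this is the standard relation between the poles of the standard $L$-function of $\pi$ and the first occurrence of $\pi$ in the theta tower (Yamana \cite{Yam}; cf.\ also Kudla--Rallis \cite{KR2}), the unitary analogue of the holomorphy input used in \cite{GT}. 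Hence the residue vanishes and (b) holds. Combining (a) and (b) with Step~1 gives the proposition in both cases.

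The step I expect to be the main obstacle is (b): one must know that $I$ kills $\mathrm{Im}(A^{(n,r)}_{-1})$, i.e.\ that the residues of Siegel Eisenstein series at $s=\rho_{n,r}$ have vanishing $(\pi\otimes\bar\pi)$-period along the doubling embedding $i$. For $\psi\in S(V^n(\BA))^{\circ}$ this can alternatively be extracted from the weak first term identity (Proposition \ref{Prop 5.3}), since $A^{(n,r)}_{-1}(\psi)=b_{n,r}B^{(n,r)}_{-2}(\psi)$ while, by the computation of (a) with $r$ in place of $n-r$, $I[\CE^{(n,r)}(\cdot,s;\psi)]=\kappa^{-1}\int_{[U(V)]}(\sum_j\theta_{2r}(f_1,\phi'_{1,j})\overline{\theta_{2r}(f_2,\phi'_{2,j})})E(h,s)\,dh$ has at most a simple pole at $s=r/2$ (as $E(h,s)$ does, the theta lifts being independent of $s$), so $I[B^{(n,r)}_{-2}(\psi)]=0$ and hence $I[A^{(n,r)}_{-1}(\psi)]=0$; to make this suffice one must also check that the error term $R$ of Theorem \ref{Thm 5.4} actually lies in $A^{(n,r)}_{-1}(S(V^n(\BA))^{\circ})$, which one reads off from the derivation of Theorems \ref{Thm 4.8} and \ref{Thm 5.4} (there the error is a multiple of $E^{(n,n)}_{-1}(\tfrac{2r-n}{2})$, i.e.\ of $A^{(n,r)}_{-1}(\vphi^0)$). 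For the arbitrary $\vphi$ of part (ii) this route is no longer available, and one must use the holomorphy input above or, equivalently, the full first and second term identities of \cite{GQT}.
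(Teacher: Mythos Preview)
Your proof of (i) follows the same scheme as the paper: Proposition~\ref{Prop 6.3} plus Theorem~\ref{Thm 5.4}, then the two vanishing statements (a) and (b). Step (a) is identical to the paper's argument. The differences lie in how (b) is handled and in the treatment of part (ii).

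For (b), the paper takes a route you do not mention: since $r+1\le n\le 2r-1$ is equivalent to $n\ge 2(n-r)+1$, one may apply Ichino's first term identity (Proposition~\ref{Prop 5.1}) with the roles of $V$ and $V_c$ swapped, obtaining $A^{(n,r)}_{-1}(\vphi')=a_{n,n-r}\,B^{(n,n-r)}_{-1}(\mathrm{Ik}(\vphi'))$ for \emph{every} $\vphi'\in S(V^n(\BA))$. Writing $\mathrm{Ik}(\vphi')=\sum_j\sigma'(\phi''_{1,j}\otimes\overline{\phi''_{2,j}})$ and invoking Proposition~\ref{Prop 6.3} on the $V_c$ side then gives $I[A^{(n,r)}_{-1}(\vphi')]=a_{n,n-r}\sum_j\pair{\theta_{2(n-r)}(f_1,\phi''_{1,j}),\theta_{2(n-r)}(f_2,\phi''_{2,j})}=0$ by the same theta-tower vanishing as in (a). This is shorter than either of your routes: it avoids the holomorphy-of-$L$ input (whose proof in this range typically goes through exactly these Siegel--Weil identities, and for which \cite{Yam} is in any case not the right reference), and it sidesteps the check that the error term $R$ of Theorem~\ref{Thm 5.4} lands in $A^{(n,r)}_{-1}(S(V^n(\BA))^{\circ})$. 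Your alternative $B^{(n,r)}_{-2}$ argument is correct and rather nice, but its reliance on Proposition~\ref{Prop 5.3} is what forces that extra check.

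For (ii), the paper does \emph{not} invoke \cite{GQT}. Instead it observes that both $\vphi\mapsto I[B^{(n,r)}_{-1}(\vphi)]$ and $\vphi\mapsto I[A^{(n,r)}_0(\vphi)]$ factor through the $H(\BA)$-coinvariants $S(V^n(\BA))\to S(V^n(\BA))_{H(\BA)}$, the quotient map being $\vphi\mapsto\Phi_\vphi\in I_n(\rho_{n,r})$: the right side manifestly depends only on $\Phi_\vphi$, and the left side is an inner product of theta lifts to $U(V)$, hence unchanged under $\vphi\mapsto\omega(h)\vphi$ for $h\in H(\BA)$. Since $S(V_v^n)_{H_v}\cong R_n(V_v)$ is generated by the spherical vector at every place, $S(V^n(\BA))^{\circ}$ surjects onto the coinvariants, and (ii) follows from (i). This keeps the argument self-contained, whereas your appeal to \cite{GQT} makes (ii) depend on the very result this paper is meant to corroborate.
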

\begin{proof}
(i) By Proposition \ref{Prop 6.3} and Theorem \ref{Thm 5.4}, we have
$$\begin{aligned}
&\quad \sum_{i}\pair{\theta_{2r}(f_1,\phi_{1,i}), \theta_{2r}(f_2,\phi_{2,i})}\\
&=\int_{[G\times G]}f_1(g_1)\overline{f_2(g_2)}B^{(n,r)}_{-1}(\vphi)(i(g_1,g_2))dg_1 dg_2\\
&=\int_{[G\times G]}f_1(g_1)\overline{f_2(g_2)}A^{(n,r)}_0(\vphi)(i(g_1,g_2))dg_1 dg_2\\
&\quad -c\int_{[G\times G]}f_1(g_1)\overline{f_2(g_2)} B^{(n,n-r)}_0(\mathrm{Ik}(\vphi))(i(g_1,g_2))dg_1 dg_2\\
&\quad +C \int_{[G\times G]}f_1(g_1)\overline{f_2(g_2)} A^{(n,r)}_{-1}(\vphi')(i(g_1,g_2))dg_1 dg_2,
\end{aligned}$$
where $c$ and $C$ are non-zero constants, and $\vphi'$ is another Schwartz function.

We now show that
$$\begin{aligned}
&(1) \qquad \int_{[G\times G]}f_1(g_1)\overline{f_2(g_2)} B^{(n,n-r)}_0(\mathrm{Ik}(\vphi))(i(g_1,g_2))dg_1 dg_2=0,\\
&(2) \qquad \int_{[G\times G]}f_1(g_1)\overline{f_2(g_2)} A^{(n,r)}_{-1}(\vphi')(i(g_1,g_2))dg_1 dg_2=0.
\end{aligned}$$
Both of these are implied by the assumption $\Theta_{2r-2}(\pi)=0$.

First we show the the first identity $(1)$.

Let $V_c=V_c^{+}+V_c^{-}$ be a complete polarization of the complementary space $V_c$. Note that there is a natural isomorphism
$$ \sigma' : \CS((V_c^{+}\otimes W)(\BA))\hat{\otimes} \CS((V_c^{+}\otimes W)(\BA)) \ra \CS(V_c^n(\BA)).$$

Write
$$ \mathrm{Ik}(\vphi)=\sum_j \sigma'(\phi_{1,j}'\otimes \overline{\phi_{2,j}'}),$$
where $\phi_{1,j}', \phi_{2,j}' \in \CS((V_c^{+}\otimes W)(\BA))$.

Write $\vphi_j'=\sigma'(\phi_{1,j}'\otimes \overline{\phi_{2,j}'})$. Then as in the proof of Proposition \ref{Prop 6.3}, we have
$$\begin{aligned}
&\quad \int_{[G\times G]}f_1(g_1)\overline{f_2(g_2)}\CE^{(n,n-r)}(i(g_1,g_2),s;\vphi_j')dg_1 dg_2\\
&=\frac{1}{\kappa'} \int_{[H_c}\theta_{2(n-r)}(f_1,\phi_{1,j}')(h) \overline{\theta_{2(n-r)}(f_2,\phi_{2,j}')(h)}E'(h,s)dh,
\end{aligned}$$
where $H_c=U(V_c)$, $E'(h,s)$ is the auxiliary Eisenstein series on $H_c(\BA)$, $\kappa'$ is the constant residue of $E'(h,s)$ at $s=(n-r)/2$. Since $\Theta_{2(r-1)}(\pi)=0$ and $n-r\leq r-1$, we have $\Theta_{2(n-r)}(\pi)=0$ by the tower property of theta lifts. Thus $\theta_{2(n-r)}(f_1,\phi_{1,j}')=0$ and $ \theta_{2(n-r)}(f_2,\phi_{2,j}')=0$.  Hence
$$ \int_{[G\times G]}f_1(g_1)\overline{f_2(g_2)}\CE^{(n,n-r)}(i(g_1,g_2),s;\vphi_j')dg_1 dg_2=0.$$
Since $\mathrm{Ik}(\vphi)=\sum_j \vphi_j'$, we have
$$ \int_{[G\times G]}f_1(g_1)\overline{f_2(g_2)}\CE^{(n,n-r)}(i(g_1,g_2),s;\mathrm{Ik}(\vphi))dg_1 dg_2=0.$$
In particular,
$$ \int_{[G\times G]}f_1(g_1)\overline{f_2(g_2)}B^{(n,n-r)}_0(\mathrm{Ik}(\vphi))(i(g_1,g_2))dg_1 dg_2=0.$$

Next we show the second identity $(2)$.

Since $n\leq 2r-1$, we have $n\geq 2(n-r)+1$. By Proposition \ref{Prop 5.1}, we have
$$\begin{aligned}
&\quad \int_{[G\times G]}f_1(g_1)\overline{f_2(g_2)} A^{(n,r)}_{-1}(\vphi')(i(g_1,g_2))dg_1 dg_2\\
&=a_{n,n-r} \int_{[G\times G]}f_1(g_1)\overline{f_2(g_2)}B^{(n,n-r)}_{-1}(\mathrm{Ik}(\vphi'))(i(g_1,g_2))dg_1 dg_2.
\end{aligned}$$
Write $\mathrm{Ik}(\vphi')=\sum_j \sigma'(\phi_{1,j}'' \otimes \overline{\phi_{2,j}''})$. Then by Proposition \ref{Prop 6.3} we have
$$ \begin{aligned}
&\quad \sum_j \pair{\theta_{2(n-r)}(f_1,\phi_{1,j}''), \theta_{2(n-r)}(f_2,\phi_{2,j}'')}\\
&=\int_{[G\times G]}f_1(g_1)\overline{f_2(g_2)}B^{(n,n-r)}_{-1}(\mathrm{Ik}(\vphi'))(i(g_1,g_2))dg_1 dg_2.
\end{aligned}$$
The LHS of the above identity is zero since $\theta_{2(n-r)}(f_1,\phi_{1,j}'')=\theta_{2(n-r)}(f_2,\phi_{2,j}'')=0$ by the fact that $\Theta_{2(n-r)}(\pi)=0$. Thus we have
$$ \int_{[G\times G]}f_1(g_1)\overline{f_2(g_2)} A^{(n,r)}_{-1}(\vphi')(i(g_1,g_2))dg_1 dg_2=0.$$

(ii) Both sides of the desired identity factor through the quotient
$$ S(V^n(\BA)) \ra S(V^n(\BA))_{H(\BA)},$$
where the quotient map is given by
$$\vphi \mapsto \Phi_{\vphi}\in I_n(\rho_{n,r}).$$
The quotient space $S(V^n(\BA))_{H(\BA)}$ is generated by the image of the spherical function $\vphi^0$ as a $G(\BA)$-module, since $S(V_v^n)_{H_v}\cong R_n(V_v)$ is generated by the spherical section $\Phi_{\vphi^0_v}$ for each $v$. In other words, the subspace $S(V^n(\BA))^{\circ}$ is mapped surjectively onto this quotient. Thus to show the identity, it suffices to show the desired identity on the subspace $S(V^n(\BA))^{\circ}$, which we have already done in (i). Thus the result follows.
\end{proof}

This proposition implies the following inner product formula for 2nd term range.
\begin{thm}\label{Thm 6.7}
[Inner product formula for 2nd term range]
Suppose $r+1\leq n \leq 2r-1$. Let $\pi$ be a cuspidal automorphic representation of $U(W)(\BA)$ with $\Theta_{2(r-1)}(\pi)=0$. Let $\vphi \in S(V^n(\BA))$ be such that $\Phi_{\vphi}$ is factorizable as $\otimes'\Phi_v$. Write $\vphi=\sum_{i}\sigma(\phi_{1,i}\otimes \overline{\phi_{2,i}})$ with $\phi_{1,i}, \phi_{2,i}\in \CS(X(\BA))$. Then for $f_1, f_2\in \pi$, we have
$$\begin{aligned}
&\quad \sum_{i}\pair{\theta_{2r}(f_1, \phi_{1,i}), \theta_{2r}(f_2,\phi_{2,i})}\\
&=\frac{1}{d_n^S(\rho_{n,r})}\mathrm{Val}_{s=\rho_{n,r}}\left(L^S(s+\frac{1}{2},\pi)\prod_{v\in S}Z_v(s,f_{1,v},f_{2,v},\Phi_v,\pi_v) \right),
\end{aligned}$$
where $\mathrm{Val}_{s=s_0}$ means the constant term of the Laurent expansion at $s=s_0$, and $S$ can be chosen to be any finite set of places of $F$ such that if $v\notin S$, then $v$ is finite and everything is unramified at $v$.
\end{thm}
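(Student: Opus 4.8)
The plan is to deduce the formula directly from Proposition \ref{Prop 6.6} together with the doubling identity of Theorem \ref{Thm 6.1}, by passing to a Laurent expansion at $s=\rho_{n,r}$. First I would invoke Proposition \ref{Prop 6.6}(ii): since $\Theta_{2(r-1)}(\pi)=0$, the left-hand side $\sum_i\pair{\theta_{2r}(f_1,\phi_{1,i}),\theta_{2r}(f_2,\phi_{2,i})}$ equals $\int_{[G\times G]}f_1(g_1)\overline{f_2(g_2)}A^{(n,r)}_0(\vphi)(i(g_1,g_2))\,dg_1\,dg_2$, where $G=U(W)$. So the entire task reduces to evaluating this doubling-type integral of the automorphic form $A^{(n,r)}_0(\vphi)$, which is the constant term in the Laurent expansion at $s=\rho_{n,r}$ of the Siegel Eisenstein series $E^{(n,n)}(-,s;\Phi_{\vphi})$.

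Next I would apply Theorem \ref{Thm 6.1} to the factorizable standard section $\Phi=\Phi_{\vphi}$, which yields the identity of meromorphic functions
$$\int_{[G\times G]}f_1(g_1)\overline{f_2(g_2)}E^{(n,n)}(i(g_1,g_2),s;\Phi_{\vphi})\,dg_1\,dg_2=\frac{1}{d_n^S(s)}L^S\!\left(s+\tfrac12,\pi\right)\prod_{v\in S}Z_v(s,f_{1,v},f_{2,v},\Phi_v,\pi_v).$$
In the second term range $r+1\le n\le 2r-1$ the Siegel Eisenstein series has at most a simple pole at $s=\rho_{n,r}=(2r-n)/2$, with Laurent expansion $E^{(n,n)}(g,s;\Phi_{\vphi})=A^{(n,r)}_{-1}(\vphi)(g)(s-\rho_{n,r})^{-1}+A^{(n,r)}_0(\vphi)(g)+O(s-\rho_{n,r})$, locally uniform and with coefficients of moderate growth. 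Since $f_1,f_2$ are cusp forms, the pairing $\int_{[G\times G]}f_1\overline{f_2}(\cdot)$ can be applied term by term, so the left-hand side of the displayed identity has a Laurent expansion at $s=\rho_{n,r}$ whose constant term $\mathrm{Val}_{s=\rho_{n,r}}$ equals $\int_{[G\times G]}f_1(g_1)\overline{f_2(g_2)}A^{(n,r)}_0(\vphi)(i(g_1,g_2))\,dg_1\,dg_2$. To read off $\mathrm{Val}_{s=\rho_{n,r}}$ of the right-hand side I would check that $d_n^S(s)=\prod_{v\notin S}\prod_{0\le i\le n-1}L_{F_v}(2s+n-i,\eta_v^i)$ is holomorphic and non-vanishing at $s=\rho_{n,r}$: there its arguments are $2r-i$ for $0\le i\le n-1$, i.e.\ the integers from $2r$ down to $2r-n+1$, and $2r-n+1\ge 2$ because $n\le 2r-1$, so every unramified local Euler factor $(1-\eta_v(\varpi_v)q_v^{-(2r-i)})^{-1}$ is finite and non-zero. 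Thus $d_n^S(\rho_{n,r})$ can be pulled out of the $\mathrm{Val}$, and comparing the two sides gives the asserted inner product formula.

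The main obstacle has in fact already been cleared, since it resides in Proposition \ref{Prop 6.6}, whose proof uses the weak second term identity (Theorem \ref{Thm 5.4}) together with the vanishing of the complementary term $B^{(n,n-r)}_0(\mathrm{Ik}(\vphi))$ and of the first-term contribution $A^{(n,r)}_{-1}(\vphi')$ against cusp forms when $\Theta_{2(r-1)}(\pi)=0$. Granting that, the remaining points needing care are purely formal: the interchange of the $[G\times G]$-integration with the Laurent expansion at $s=\rho_{n,r}$ (standard, using cuspidality of $f_1,f_2$ and moderate growth of the Laurent coefficients), and the elementary regularity check on $d_n^S$ just described, which is what lets the answer be written with $d_n^S(\rho_{n,r})$ in the denominator rather than with $1/d_n^S(s)$ kept inside the $\mathrm{Val}$.
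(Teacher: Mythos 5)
Your proposal is correct and follows essentially the same route as the paper: reduce via Proposition \ref{Prop 6.6}(ii) to the doubling integral of $A^{(n,r)}_0(\vphi)$, then apply Theorem \ref{Thm 6.1} and extract the constant term of the Laurent expansion at $s=\rho_{n,r}$. Your extra checks (holomorphy and non-vanishing of $d_n^S$ at $\rho_{n,r}$, and the term-by-term pairing against cusp forms) are sound justifications of steps the paper leaves implicit.
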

\begin{proof}
By Proposition \ref{Prop 6.6}, we have
$$\begin{aligned}
&\quad \sum_{i}\pair{\theta_{2r}(f_1, \phi_{1,i}), \theta_{2r}(f_2,\phi_{2,i})}\\
&=\int_{[G\times G]}f_1(g_1)\overline{f_2(g_2)}A^{(n,r)}_0(\vphi)(i(g_1,g_2))dg_1 dg_2.
\end{aligned}$$

By the doubling method (Theorem \ref{Thm 6.1}), we have
$$\begin{aligned}
&\quad \int_{[G\times G]}f_1(g_1)\overline{f_2(g_2)}A^{(n,r)}_0(\vphi)(i(g_1,g_2))dg_1 dg_2\\
&=\frac{1}{d_n^S(\rho_{n,r})}\mathrm{Val}_{s=\rho_{n,r}}\left(L^S(s+\frac{1}{2},\pi)\prod_{v\in S}Z_v(s,f_{1,v},f_{2,v},\Phi_v,\pi_v) \right).
\end{aligned}$$
Then the desired result follows.
\end{proof}

\section{Appendix: Proof of Proposition 4.1}\label{Appendix}
In this section, we will give a proof of Proposition \ref{Prop 4.1} following D.Jiang \cite{J1,J2}.

Recall that $G=G_n=U(n,n)$, and for $1\leq r \leq n$, $P_r=P^n_r$ is the parabolic subgroup of $G_n$ with stabilizes the subspace $\{(0_n, a_1, \ldots, a_r, 0_{n-r})|a_i\in E\}$, and $I^{n,r}(s)=\Ind_{P_r(\BA)}^{G(\BA)}|\cdot|^s$ is the degenerate principal series.

For a holomorphic section $f \in I^{n,r}(s)$, the associated Eisenstein series on $G_n(\BA)$ is defined by
$$E^{(n,r)}(g,s;f)=\sum_{\gamma \in P^n_r(F)\bs G_n(F)}f(\gamma g,s), \quad \quad \text{for $g\in G_n(\BA)$}.$$
It is well-known that this series converges absolutely for $\Re(s)\gg 0$, and has a meromorphic continuation to the whole $s$-plane.

The spherical Eisenstein series $E^{(n,r)}(g,s)$ is the one associated to the spherical section $f^0$ in $I^{n,r}(s)$ with $f^0(1)=1$, i.e. $E^{(n,r)}(g,s)=E^{(n,r)}(g,s;f^0)$.

Write $P^n_1=Q$. For $f \in I^{n,r}(s)$, the constant term of the Eisenstein series $E^{(n,r)}(g,s;f)$ along $Q$ is given by
$$E_{Q}^{(n,r)}(g,s;f)=\int_{N_1(F)\bs N_1(\BA)}E^{(n,r)}(ug,s;f)du,$$
where $N_1$ is the unipotent radical of $Q$ given explicitly by
$$N_1(F)=\left\{u(b,c,d)=\begin{pmatrix} 1& b & c & d \\ &1_{n-1} & ^t\!\bar{d} & \\ & & 1 & \\ & & -^t\!\bar{b} & 1_{n-1} \end{pmatrix}\right\},$$
where $b,d \in M_{1,n-1}(E)$, and $c\in E$ with $c+\bar{c}=-(b ^t\!\bar{d}+d ^t\!\bar{b})$.

Unfolding, we get
$$E_{Q}^{(n,r)}(g,s;f)=\sum_{\gamma \in P_r(F)\bs G(F)/N_1(F)}\int_{N_1^{\gamma}(F)\bs N_1(\BA)}f(\gamma ug,s)du,$$
where $N_1^{\gamma}=N_1\cap \gamma^{-1}P_r \gamma$.

Similar to \cite{J2} Lemma 2.1, we have the following
\begin{lem}
Let
$$w_1=\begin{pmatrix} & & 1& & & & &\\  &1_{r-1} & & & & & &\\ 1 & & & & & & &\\  & & &1_{n-r-1} & & & &\\  & & & & & & 1 &\\  & & & & &1_{r-1} & &\\  & & & & 1 & & &\\  & & & & & & & 1_{n-r-1}\end{pmatrix},$$

$$w_2=\begin{pmatrix} & & 1 & \\ & 1_{n-1}& &  \\ -1 & & & \\ & & & 1_{n-1}\end{pmatrix}.$$
Then we have

(i)
$$
G_n=\begin{cases}
P^n_r Q \cup P^n_r w_1 Q \cup P^n_r w_2 Q &\text{if $1\leq r<n$,}\\
P^n_r Q \cup P^n_r w_2 Q  &\text{if $r=n$.}
\end{cases}$$

(ii)
$$\begin{aligned}
&\text{$P^n_r\bs P^n_r Q/N_1 \cong P^{n-1}_{r-1} \bs G_{n-1}$ if $r>1$,}\\
&P^n_r\bs P^n_r w_1 Q/N_1 \cong P^{n-1}_r \bs G_{n-1},\\
&\text{$P^n_1\bs P^n_1 w_2 Q/N_1=[w_2]$, and $P^n_r\bs P^n_r w_2 Q/N_1 \cong P^{n-1}_{r-1}\bs G_{n-1}$ if $r>1$.}
\end{aligned}$$
\end{lem}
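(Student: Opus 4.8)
The plan is to treat this as a relative Bruhat decomposition for the pair of maximal parabolics $(P^n_r,Q)$ of $G_n=U(\CW)$, read off from the geometry of isotropic subspaces of $\CW$. Fix the standard isotropic $r$-subspace $X_0$ stabilized by $P^n_r$ and the isotropic line $\ell_0$ stabilized by $Q=P^n_1$, so that $P^n_r(F)\bs G_n(F)/Q(F)$ is in bijection with the set of $G_n(F)$-orbits on pairs $(X,\ell)$, where $X$ is an isotropic $r$-subspace and $\ell$ an isotropic line of $\CW$. By Witt's extension theorem for the skew-hermitian space $\CW$ over $E$, such an orbit is determined by $\dim(X\cap\ell)\in\{0,1\}$ together with, when this dimension is $0$, the additional datum of whether or not $\ell\subset X^{\perp}$. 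This gives exactly three orbits when $1\le r<n$: (a) $\ell\subset X$; (b) $\ell\not\subset X$ but $\ell\subset X^{\perp}$; (c) $\ell\not\subset X^{\perp}$. When $r=n$ the subspace $X$ is Lagrangian, so $X^{\perp}=X$ and case (b) cannot occur, leaving (a) and (c). It then remains to check that the explicit elements $1,\,w_1,\,w_2$ lie in $G_n(F)$ and that the pairs $(X_0,\ell_0)$, $(w_1^{-1}X_0,\ell_0)$, $(w_2^{-1}X_0,\ell_0)$ represent cases (a), (b), (c) respectively; this is a direct verification with the block matrices using the form defining $U(\CW)$ and the coordinate description of $P^n_r$ from Section \ref{Section 2}, and it proves (i).

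For (ii), for a representative $w$ of one of these orbits one has, as a right $Q(F)$-set, $P^n_r(F)\bs P^n_r(F)\,w\,Q(F)\cong (w^{-1}P^n_r w\cap Q)(F)\bs Q(F)$; quotienting further by $N_1(F)$ and using the Levi decomposition $Q=M_1N_1$ with $M_1\cong R_{E/F}\BG_m\times G_{n-1}$, where $G_{n-1}=U(n-1,n-1)$ acts on the nondegenerate $2(n-1)$-dimensional skew-hermitian space $\ell_0^{\perp}/\ell_0$, one finds that the double coset space is identified with $G_{n-1}(F)$ modulo the image of $w^{-1}P^n_r w\cap Q$ in the $G_{n-1}$-factor. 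That image is the stabilizer in $G_{n-1}$ of the image in $\ell_0^{\perp}/\ell_0$ of the relevant isotropic subspace: in case (a) the $r$-space contains $\ell_0$ and descends to an isotropic $(r-1)$-subspace, giving $P^{n-1}_{r-1}$; in case (b) the $r$-space lies in $\ell_0^{\perp}$ and meets $\ell_0$ trivially, hence descends isomorphically to an isotropic $r$-subspace, giving $P^{n-1}_r$; in case (c) the intersection of the $r$-space with $\ell_0^{\perp}$ is an isotropic $(r-1)$-subspace mapping isomorphically into $\ell_0^{\perp}/\ell_0$, giving again $P^{n-1}_{r-1}$. This yields the three isomorphisms in (ii). The degenerate instances are immediate: when $r=1$, in case (a) one has $X_0=\ell_0$ so $P^n_1Q=Q$ and the double coset is a single point, while in case (c) the intersection with $\ell_0^{\perp}$ is zero, so the $w_2$-double coset reduces to the single class $[w_2]$.

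The conceptual skeleton above — the orbit count via Witt's theorem, and the standard stabilizer-in-the-Levi computation for the fibration over $G_{n-1}$ — is routine. The real work, exactly as in Jiang's treatment \cite{J1,J2} of the analogous symplectic and orthogonal cases, is the explicit bookkeeping: selecting $w_1$ and $w_2$ so that $w_i^{-1}P^n_r w_i\cap Q$ projects \emph{onto the standard} parabolic $P^{n-1}_{r-1}$ or $P^{n-1}_r$ of $G_{n-1}$ in the block coordinates used here (in particular checking that the $R_{E/F}\BG_m$-factor of $M_1$ is fully absorbed), and carefully isolating the boundary cases $r=1$ and $r=n$ where orbits degenerate. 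I expect this, rather than the orbit classification, to be where care is needed, and I would carry it out by direct block-matrix computation following \cite{J1,J2}.
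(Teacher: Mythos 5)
Your proposal is correct, and it is in fact more of a proof than the paper supplies: the paper states this lemma with no argument at all, merely asserting it is ``similar to \cite{J2} Lemma 2.1'' (Jiang's symplectic analogue, which is itself established by explicit Bruhat-type double-coset computations with matrix representatives). Your route -- identifying $P^n_r(F)\bs G_n(F)/Q(F)$ with $G_n(F)$-orbits on pairs $(X,\ell)$ of an isotropic $r$-subspace and an isotropic line, classifying these by the incidence data $\ell\subset X$, $\ell\not\subset X$ but $\ell\subset X^{\perp}$, $\ell\not\subset X^{\perp}$ via Witt's extension theorem, and then computing the image of $w^{-1}P^n_r w\cap Q$ in $M_1\cong R_{E/F}\BG_m\times G_{n-1}$ -- is the intrinsic, coordinate-free version of the same decomposition, and it correctly predicts both the disappearance of the middle orbit when $r=n$ (since then $X^{\perp}=X$) and the degenerations at $r=1$. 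I verified that your representatives match the paper's: with the right action on row vectors, $X_0w_1$ contains $\ell_0^{\perp}$-but-not-$\ell_0$ data (case (b)) and $X_0w_2$ pairs nontrivially with $\ell_0$ (case (c)); note only that since $G_n$ acts on $\CW$ on the right, the relevant pairs are $(X_0w_i,\ell_0)$ rather than $(w_i^{-1}X_0,\ell_0)$. The one step in (ii) deserving an explicit sentence, which you correctly flag as the place where care is needed, is that in cases (b) and (c) the image of $w^{-1}P^n_r w\cap Q$ in $M_1=Q/N_1$ is the \emph{full} preimage of the stated parabolic of $G_{n-1}$: this uses that $N_1$ acts transitively on the complements of $\ell_0$ in $X\oplus\ell_0$ lying over a fixed subspace of $\ell_0^{\perp}/\ell_0$ (equivalently, that $N_1$ surjects onto $\Hom_E(\ell_0^{\perp}/\ell_0,\ell_0)$ through its action on $\ell_0^{\perp}$), so that stabilizing the image downstairs can always be corrected by an element of $N_1$ to stabilize $X$ itself. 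With that supplied, your argument is complete and, unlike the paper's citation, self-contained.
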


Let $B$ be the standard Borel subgroup of $G=U(n,n)$. Then $B=AN$, where $A\cong (R_{E/F}GL_1)^n$ is the Levi factor, and $N$ is the unipotent radical. For $w=w_1$ or $w_2$, let $N_w=w^{-1} \bar{N}w \cap N$, where $\bar{N}$ is the opposite of $N$.

For $f \in I^{n,r}(s)$, define two intertwining operators as follows:
$$\begin{aligned}
&\CU_1(s)f(g)=\int_{N_{w_1}(\BA)}f(w_1 ug,s)du,\\
&\CU_2(s)f(g)=\int_{N_{w_2}(\BA)}f(w_2 ug,s)du.
\end{aligned}$$

Let $M_1$ be the Levi factor of $Q=P_1$. Then $M_1\cong R_{E/F}GL_1\times G_{n-1}$, and we write an element in $M_1$ as $m_1(a,g)$ with $a\in R_{E/F}GL_1$ and $g\in G_{n-1}$.

We have the following result analogous to \cite{J1} Chapter 3, Lemma 1.0.2.
\begin{lem}
For $m_1(a,g)\in M_1(\BA)$ and $f \in I^{n,r}(s)$, we have

(1) $f(m_1(a,g),s)=\|a\|^{s+\frac{2n-r}{2}}f(m_1(1,g),s).$

(2) $ \CU_1(s)f (m_1(a,g))=\|a\|^{r-1} \CU_1(s)f (m_1(1,g)).$

(3)  $ \CU_2(s)f(m_1(a,g))=\|a\|^{-s+\frac{2n-r}{2}} \CU_2(s)f(m_1(1,g)).$
\end{lem}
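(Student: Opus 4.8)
The plan is to obtain part (1) directly from the definition of the degenerate principal series, and parts (2)--(3) by a change of variables in the defining integrals of $\CU_1(s)$ and $\CU_2(s)$ combined with the explicit shape of $w_1,w_2$ from the preceding Lemma.

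For (1): the line in $\CW$ stabilized by $Q=P_1$ is contained in the isotropic $r$-subspace defining $P_r$, so $m_1(a,1)$ stabilizes that subspace, scaling one of its basis vectors by $a$ and fixing the rest; in the Levi coordinates $M_r\cong R_{E/F}GL_r\times U(n-r,n-r)$ it is $m_r(\mathrm{diag}(a,1_{r-1}),1)$, hence $m_1(a,1)\in P_r(\BA)$ with $|a_r(m_1(a,1))|=\|a\|$. Since $M_1\cong R_{E/F}GL_1\times G_{n-1}$ is a direct product, $m_1(a,g)=m_1(a,1)\,m_1(1,g)$ with the two factors commuting, and applying $f(pg,s)=|a_r(p)|^{s+\frac{2n-r}{2}}f(g,s)$ with $p=m_1(a,1)$ gives $f(m_1(a,g),s)=\|a\|^{s+\frac{2n-r}{2}}f(m_1(1,g),s)$.

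For (2) and (3) I would argue uniformly for $i=1,2$. Write $m_1(a,g)=m_1(a,1)m_1(1,g)$ and substitute $u\mapsto m_1(a,1)\,u\,m_1(a,1)^{-1}$ in
$$\CU_i(s)f(m_1(a,g))=\int_{N_{w_i}(\BA)}f(w_i\,u\,m_1(a,1)\,m_1(1,g),s)\,du;$$
this is legitimate because $m_1(a,1)$ lies in the maximal torus and normalizes $N_{w_i}(\BA)$, and it contributes a Jacobian equal to the modulus $\delta_i(a)=\prod_{\alpha}\|\alpha(m_1(a,1))\|$, the product over the positive roots $\alpha$ of $U(n,n)$ sent to negative roots by $w_i$. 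After the substitution $m_1(a,1)$ sits to the left of $w_i$, and moving it across via $w_i m_1(a,1)=(w_i m_1(a,1)w_i^{-1})\,w_i$ one checks from the explicit matrices that $p_i(a):=w_i m_1(a,1)w_i^{-1}$ again lies in $P_r(\BA)$: for $w_1$ the vector it scales lies outside the $r$-subspace, so $p_1(a)$ lies in the $U(n-r,n-r)$-factor of $M_r$ and $|a_r(p_1(a))|=1$; for $w_2$ it lies in the $GL_r$-factor and $|a_r(p_2(a))|=\|a\|^{\pm 1}$. Pulling $p_i(a)$ out by the $I^{n,r}(s)$ transformation law gives
$$\CU_i(s)f(m_1(a,g))=\delta_i(a)\,|a_r(p_i(a))|^{s+\frac{2n-r}{2}}\,\CU_i(s)f(m_1(1,g)),$$
after which the scalar is evaluated by listing the roots inverted by $w_i$ and summing their values on $m_1(a,1)$, exactly as in \cite{J1} Chapter~3; this simplifies to $\|a\|^{r-1}$ for $i=1$ (the $s$-dependence dropping out since $|a_r(p_1(a))|=1$) and to $\|a\|^{-s+\frac{2n-r}{2}}$ for $i=2$.

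The main obstacle is precisely the bookkeeping in this last step: fixing consistent conventions for the torus coordinates, the ordering of the roots of $U(n,n)$, and the character $a_r(\cdot)$, and then checking that the product of all the $\|a\|$-contributions collapses to exactly the stated exponent --- in particular, seeing why the $s$-dependence disappears entirely for $\CU_1$ but reappears with reversed sign for $\CU_2$ (reflecting that $w_2$ is the long-root reflection attached to the first hyperbolic plane, whereas $w_1$ merely permutes two $GL$-type coordinates). I would sanity-check the three exponents $s+\frac{2n-r}{2}$, $r-1$, $-s+\frac{2n-r}{2}$ against small-rank cases and against the way they reappear, after incorporating the modulus of $Q$, in the three summands of Proposition~\ref{Prop 4.1}.
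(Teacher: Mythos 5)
Your strategy for (2) and (3) --- conjugate $m_1(a,1)$ across the unipotent integral, collect the Jacobian, and move it past $w_i$ into $P_r$ --- is the right one, and your part (1) and your identification of the factors $|a_r(p_i(a))|$ (namely $1$ for $w_1$ and $\|a\|^{-1}$ for $w_2$) are correct. But the bookkeeping you defer is exactly where the argument breaks, in two ways. First, with the domain $N_{w_i}=w_i^{-1}\bar{N}w_i\cap N$ taken literally, the integrals diverge once $r\geq 2$: the roots $x_i-x_{r+1}$ ($2\leq i\leq r$) for $w_1$, and $x_1+x_j$ ($2\leq j\leq r$) for $w_2$, are sent by $w_i$ to negative roots of the Levi $M_r$, so $w_iuw_i^{-1}$ lands in $P_r$ with $|a_r|=1$ and the integrand is constant along those directions. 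The domain must be cut down to $N_1\cap w_i^{-1}\bar{N}_{P_r}w_i$, where $\bar{N}_{P_r}$ is the opposite of the unipotent radical of $P_r$. This is not cosmetic for your computation: for $w_2$ each discarded root space contributes $\|a\|$ to your Jacobian, so your recipe ``product over all positive roots inverted by $w_i$'' gives $\delta_2(a)=\|a\|^{2n-1}$ and hence the exponent $-s+\frac{2n-r}{2}+(r-1)$ in (3), not the stated $-s+\frac{2n-r}{2}$; only after discarding them does one get $\delta_2(a)=\|a\|^{2n-r}$ and the stated exponent.

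Second, for (2) the corrected domain is $\prod_{j=2}^{r+1}N_{x_1-x_j}$, consisting of $r$ root spaces each isomorphic to $E$ on which $\mathrm{Ad}(m_1(a,1))$ acts with module $\|a\|$; since $|a_r(p_1(a))|=1$, the method yields $\CU_1(s)f(m_1(a,g))=\|a\|^{r}\,\CU_1(s)f(m_1(1,g))$, not $\|a\|^{r-1}$. (Test $n=2$, $r=1$ by hand: $N_{w_1}=N_{x_1-x_2}$, the substitution $x\mapsto a^{-1}x$ produces exactly one factor $\|a\|=\|a\|^{r}$, whereas the statement would give $\|a\|^{0}$.) The exponent $r$ is also the one actually carried into the middle term of Proposition \ref{Prop 7.4} and Proposition \ref{Prop 4.1}, and it is what the inductions of Section \ref{Section 4} require; so part (2) of the statement contains a misprint ($r-1$ for $r$), and a correct execution of your own method would have detected this rather than confirmed it. As written, your proof asserts that the root sum ``simplifies to $\|a\|^{r-1}$ for $i=1$,'' which it does not, and it asserts the stated exponent for $i=2$ while using a Jacobian that would in fact miss it by $\|a\|^{r-1}$.
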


For $f \in I^{n,r}(s)$, define a function $i^*f$ on $G_{n-1}(\BA)$ by
$$ i^* f(g)=f(m_1(1,g),s), \quad \text{for $g\in G_{n-1}(\BA)$}.$$

Then we have the following result analogous to \cite{J1} Chapter 3, Proposition 1.0.1.
\begin{lem}
For $n\geq r >1$,

(1) $i^*: I^{n,r}(s)\ra I^{n-1,r-1}(s+\frac{1}{2})$.

(2) $i^* \circ \CU_1(s): I^{n,r}(s) \ra I^{n-1,r}(s)$.

(3) $i^* \circ \CU_2(s): I^{n,r}(s) \ra I^{n-1,r-1}(s-\frac{1}{2})$.
\end{lem}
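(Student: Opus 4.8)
The plan is to verify, for each of the three maps, the two defining conditions of the target degenerate principal series: right $K_{G_{n-1}}$-finiteness and the $P_{r'}^{n-1}(\BA)$-transformation law with the asserted parameter. Finiteness is immediate, since $i^*$ is the restriction of a right $K_{G_n}$-finite function to the subgroup $G_{n-1}(\BA) \incl M_1(\BA) \subset G_n(\BA)$, and the intertwining integrals $\CU_1(s), \CU_2(s)$ preserve $K$-finiteness; so the content lies entirely in the transformation law. Throughout I will use the defining equivariance $f(pg,s) = |a_r(p)|^{s + \frac{2n-r}{2}} f(g,s)$ for $p \in P_r^n(\BA)$, and I note that this Lemma is naturally proved in tandem with the preceding one: the same conjugation and change-of-variables computation produces the $\|a\|$-powers recorded there, which track the $R_{E/F}GL_1$-factor of $M_1$, and the $P_{r'}^{n-1}$-equivariance claimed here, which tracks the $G_{n-1}$-factor.

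For part (1) the key geometric input is that the embedding $g \mapsto m_1(1,g)$ of $G_{n-1}$ into $G_n$ carries $P_{r-1}^{n-1}$ into $P_r^n$: the line fixed by $m_1(1,-)$ together with the rank-$(r-1)$ isotropic subspace stabilized by $p' \in P_{r-1}^{n-1}$ spans the rank-$r$ isotropic subspace defining $P_r^n$. First I would check this containment on the explicit matrix realizations, reading off at the same time that $|a_r(m_1(1,p'))| = |a_{r-1}(p')|$, since the $R_{E/F}GL_r$-determinant of $m_1(1,p')$ is the product of the trivial action on the fixed line and the $R_{E/F}GL_{r-1}$-determinant of $p'$. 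Then for $p' \in P_{r-1}^{n-1}(\BA)$ one computes
$$i^*f(p'g') = f\bigl(m_1(1,p')\,m_1(1,g'),s\bigr) = |a_{r-1}(p')|^{s + \frac{2n-r}{2}}\, i^*f(g'),$$
and the identity $s + \frac{2n-r}{2} = \bigl(s + \tfrac12\bigr) + \frac{2(n-1)-(r-1)}{2}$ shows this is exactly the equivariance required for membership in $I^{n-1,r-1}(s+\tfrac12)$.

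For parts (2) and (3) the analogous computation must absorb the Weyl element and the unipotent integration. Writing $\CU_i(s)f(g') = \int_{N_{w_i}(\BA)} f\bigl(w_i u\, m_1(1,g'),s\bigr)\,du$, I would push $p' \in P_{r'}^{n-1}(\BA)$ through by the substitution $u\, m_1(1,p') = m_1(1,p')\,u'$ with $u' = m_1(1,p')^{-1} u\, m_1(1,p')$, followed by the change of variables $u \mapsto u'$ on $N_{w_i}(\BA)$, which produces a Jacobian $\delta_i(p')$. The two facts to establish geometrically are that $m_1(1,p')$ normalizes $N_{w_i}$ and that $w_i\, m_1(1,p')\, w_i^{-1} \in P_r^n$; granting these, the defining equivariance of $f$ yields
$$i^* \circ \CU_i(s)f(p'g') = \delta_i(p')\,\bigl|a_r\bigl(w_i\, m_1(1,p')\, w_i^{-1}\bigr)\bigr|^{s + \frac{2n-r}{2}}\; i^* \circ \CU_i(s)f(g').$$
It then remains to compute $\delta_i(p')$ and the conjugated $a_r$-character explicitly and to check that their product collapses to $|a_r(p')|^{s + \frac{2(n-1)-r}{2}}$ for $w_1$, which forces the Jacobian to equal $|a_r(p')|^{-1}$ and gives parameter $s$ (so $I^{n-1,r}(s)$), and to $|a_{r-1}(p')|^{(s-\frac12) + \frac{2(n-1)-(r-1)}{2}}$ for $w_2$ (so $I^{n-1,r-1}(s-\tfrac12)$).

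The main obstacle I anticipate is precisely this last bookkeeping for parts (2) and (3): correctly identifying the normalizer action of $m_1(1,p')$ on $N_{w_i}$, evaluating the Jacobian $\delta_i(p')$, and pinning down $w_i\, m_1(1,p')\, w_i^{-1}$, so that the Jacobian and the conjugated determinant character combine to \emph{exactly} the shifts $0$ and $-\tfrac12$ rather than something merely nearby. All of these are finite matrix computations governed by the explicit forms of $w_1$ and $w_2$, and I would carry them out following the model of Jiang \cite{J1} Chapter 3, Proposition 1.0.1, where the same normalization constants appear. The Bruhat decomposition and the double-coset identifications already recorded guarantee that these three maps exhaust the relevant pieces, so once the character identities are confirmed the Lemma follows.
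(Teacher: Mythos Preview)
Your proposal is correct and follows essentially the same approach as the paper, which does not give its own proof but simply cites \cite{J1} Chapter 3, Proposition 1.0.1; the argument you outline---verifying the $P_{r'}^{n-1}$-transformation law by tracking $m_1(1,p')$ through the defining equivariance of $f$, and for $\CU_i$ through the Weyl conjugation and the unipotent change of variables---is exactly Jiang's method. Your parameter-matching identity for part (1) is right, and the residual bookkeeping you flag for parts (2) and (3) is precisely the content of the referenced computation in \cite{J1}.
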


We can obtain the following expression for the constant term of Eisenstein series. See \cite{J1} pp.76-77 for the symplectic case, and \cite{GT} Proposition 4.1 for the even orthogonal case.
\begin{prop}\label{Prop 7.4}
For $f \in I^{n,r}(s)$ and $m_1(a,g)\in M_1(\BA)$ with $a\in \BA_E^{\times}$ and $g\in G_{n-1}(\BA)$, we have:
\begin{itemize}
\item For $1< r <n$,
$$\begin{aligned}
E^{(n,r)}_{Q}(m_1(a,g),s;f)
&=\|a\|^{s+\frac{2n-r}{2}}E^{(n-1,r-1)}\left(g,s+\frac{1}{2};i^*f \right)\\
& \quad +\|a\|^{r} E^{(n-1,r)}(g,s;i^{*}\circ \CU_1(s) f)\\
& \quad +\|a\|^{-s+\frac{2n-r}{2}}E^{(n-1,r-1)}\left(g,s-\frac{1}{2};i^* \circ \CU_2(s)f \right);
\end{aligned}$$

\item For $r=1$ and $n\geq 2$,
$$\begin{aligned}
E^{(n,1)}_{Q}(m_1(a,g),s;f)
&=\|a\|^{s+\frac{2n-1}{2}}i^*f(g)+\|a\| E^{(n-1,1)}(g,s;i^{*}\circ \CU_1(s) f)\\
&\quad +\|a\|^{-s+\frac{2n-1}{2}}i^* \circ \CU_2(s)f(g).
\end{aligned}$$

\item For $r=n\geq 2$,
$$\begin{aligned}
E^{(n,n)}_{Q}(m_1(a,g),s;f)
&=\|a\|^{s+\frac{n}{2}}E^{(n-1,n-1)}\left(g,s+\frac{1}{2};i^*f \right)\\
&\quad +\|a\|^{-s+\frac{n}{2}}E^{(n-1,n-1)}\left(g,s-\frac{1}{2};i^*\circ \CU_2(s)f \right).
\end{aligned}$$
\end{itemize}
\end{prop}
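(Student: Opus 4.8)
The plan is to carry out the standard Langlands constant-term computation along a maximal parabolic, exactly as in Jiang's thesis \cite{J1} Chapter 3 and its even-orthogonal analogue \cite{GT} Proposition 4.1, now adapted to $G_n=U(n,n)$. First I would start from the definition
$$E^{(n,r)}_Q(g,s;f)=\int_{N_1(F)\bs N_1(\BA)}E^{(n,r)}(ug,s;f)\,du$$
and unfold the Eisenstein series, grouping the sum over $P_r(F)\bs G_n(F)$ into $N_1(F)$-orbits, which gives
$$E^{(n,r)}_Q(g,s;f)=\sum_{\gamma\in P_r(F)\bs G_n(F)/N_1(F)}\int_{N_1^{\gamma}(F)\bs N_1(\BA)}f(\gamma u g,s)\,du,$$
with $N_1^{\gamma}=N_1\cap\gamma^{-1}P_r\gamma$. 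By the Bruhat-type decomposition of the Lemma above (the one with the Weyl elements $w_1,w_2$), there are exactly three double cosets when $1<r<n$, represented by $1$, $w_1$, $w_2$; two when $r=n$, represented by $1$, $w_2$; and in the case $r=1$ the cell of $w_2$ degenerates to the single point $[w_2]$.

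Next, for each representative I would evaluate the corresponding inner sum-plus-integral. For $\gamma=1$ the identification $P_r\bs P_rQ/N_1\cong P^{n-1}_{r-1}\bs G_{n-1}$ reassembles the $G_{n-1}$-sum into $E^{(n-1,r-1)}(\,\cdot\,,s+\tfrac12;i^*f)$; for $\gamma=w_1$ the identification $P_r\bs P_rw_1Q/N_1\cong P^{n-1}_r\bs G_{n-1}$ produces $E^{(n-1,r)}(\,\cdot\,,s;i^*\circ\CU_1(s)f)$; and for $\gamma=w_2$ the identification $P_r\bs P_rw_2Q/N_1\cong P^{n-1}_{r-1}\bs G_{n-1}$ produces $E^{(n-1,r-1)}(\,\cdot\,,s-\tfrac12;i^*\circ\CU_2(s)f)$. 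In each of the $w_i$-cells, the integration over the part of $N_1(\BA)$ that $w_i$ conjugates into the opposite unipotent is precisely the defining integral of the intertwining operator $\CU_i(s)$, while the remaining coordinates of $N_1(F)\bs N_1(\BA)$ together with the residual coset recombine the lower-rank Eisenstein series; the restriction map $i^*$ and the shifted inducing parameters $s\pm\tfrac12$ (resp.\ $s$) are exactly what the mapping Lemma ($i^*\colon I^{n,r}(s)\to I^{n-1,r-1}(s+\tfrac12)$, etc.) records. The prefactors $\|a\|^{s+\frac{2n-r}{2}}$, $\|a\|^{r}$, $\|a\|^{-s+\frac{2n-r}{2}}$ are read off from the $M_1$-equivariance Lemma, which gives the exact $a$-homogeneity of $f$, $\CU_1(s)f$ and $\CU_2(s)f$ along $m_1(a,g)$. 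Assembling the three (or two) contributions yields the stated formula; the cases $r=n$ (no $w_1$-cell) and $r=1$ (the $w_2$-cell is a point, so its term carries no Eisenstein series) are the indicated degenerations.

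The hard part will be the middle step: for $\gamma=w_1$ and $\gamma=w_2$ one must decompose $N_1$ compatibly with the Weyl element — writing $N_1^{w_i}(F)\bs N_1(\BA)$ as a product of the piece that descends to the coinvariants rebuilding the $G_{n-1}$-Eisenstein sum and the piece $N_{w_i}(\BA)$ realizing the intertwining integral — and then check that the modulus characters of $P_r\subset G_n$ and of $P^{n-1}_{r-1},P^{n-1}_r\subset G_{n-1}$ combine to give exactly the exponents $s\pm\tfrac12$ and the $\|a\|$-powers above. The hermitian bookkeeping (the $R_{E/F}GL_1$-factor and the constraint $c+\bar c=-(b\,{}^t\!\bar d+d\,{}^t\!\bar b)$ defining $N_1$) makes this slightly more delicate than in the symplectic or orthogonal case, but it is a mechanical, if lengthy, verification once the three auxiliary Lemmas are granted. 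Finally, Proposition~\ref{Prop 4.1} follows from Proposition~\ref{Prop 7.4} by taking $f=f^0$ the normalized spherical section: $i^*f^0$ is again normalized spherical, while $i^*\circ\CU_1(s)f^0$ and $i^*\circ\CU_2(s)f^0$ are evaluated by the Gindikin--Karpelevich formula, producing the ratios of completed zeta functions $F^{(n,r)}(s)$, $G^{(n,r)}(s)$, $H^{(n)}(s)$ in that statement.
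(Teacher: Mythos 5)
Your proposal follows exactly the route the paper takes: unfold the constant term over the double cosets $P_r\bs G_n/ Q$ given by the Bruhat-type lemma, identify the cell contributions with the lower-rank Eisenstein series attached to $i^*f$, $i^*\circ\CU_1(s)f$, $i^*\circ\CU_2(s)f$, and read the $\|a\|$-exponents off the $M_1$-equivariance lemma (the paper simply cites the identical argument in Jiang's memoir, Chapter 3, Theorem 1.0.2). The argument is correct and complete in the same sense as the paper's.
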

\begin{proof}
Same as the proof of \cite{J1} Chapter 3, Theorem 1.0.2.
\end{proof}

Recall that our goal is to prove Proposition \ref{Prop 4.1}. To achieve this, we see from the above proposition that we only need to compute the following two integrals
$$c_1(s):=\CU_1(s)f^0(1)=\int_{N_{w_1}(\BA)}f^0(w_1 u,s)du$$
and $$c_2(s):=\CU_2(s)f^0(1)=\int_{N_{w_2}(\BA)}f^0(w_2 u,s)du,$$
where $f^0$ is the spherical section in $I^{n,r}(s)$ with $f^0(1)=1$.

To do this, we use the Gindikin-Karpelevich argument as in \cite{PSR} $\S 5.3$.\\

Let $T$ be the standard maximal $F$-split torus of $G=U(n,n)$. Then $T\cong (GL_1)^n$ and a typical element in $T$ is of the form $t=\mathrm{diag}(a_1,\ldots, a_n, a_1^{-1}, \ldots, a_n^{-1})$, where each $a_i\in GL_1$.

Let $X^*(T)$ be the group of rational characters of $T$. Then $X^*(T)$ is a free $\BZ$-module with basis $\{x_i|1\leq i \leq n\}$, where $x_i \in X^*(T)$ is given by $x_i(t)=a_i$ for $t=\mathrm{diag}(a_1,\ldots, a_n, a_1^{-1}, \ldots, a_n^{-1})\in T$.

Let $\Sigma$ be the set of $F$-roots of $G$ with respect to $T$, $\Sigma^{+}$ the set of positive roots determined by $N$. For a root $\alpha$, let $-\alpha$ be the root given by $(-\alpha)(t)=\alpha(t)^{-1}$ for $t\in T$. For two roots $\alpha_1, \alpha_2$, let $\alpha_1+\alpha_2$ be the root given by $(\alpha_1+\alpha_2)(t)=\alpha_1(t)\alpha_2(t)$. Then
$$\begin{aligned}
&\Sigma=\{\pm(x_i \pm x_j)|1\leq i<j \leq n\}\cup \{\pm 2x_k|1\leq k \leq n\},\\
&\Sigma^{+}=\{(x_i\pm x_j)|1\leq i < j \leq n \}\cup \{2x_k|1\leq k \leq n\}.
\end{aligned}$$

Define the Weyl group $W_G=N_G(T)/C_G(T)$, where $N_G(T)$ is the normalizer of $T$ in $G$, and $C_G(T)$ is the centralizer of $T$ in $G$.

For an element $w$ in the Weyl group $W_G$, let
$$\Sigma^{+}(w)=\{\alpha \in \Sigma^{+}|w^{-1}\alpha \in \Sigma^{-}\},$$
where $w^{-1}\alpha$ is the root given by $(w^{-1}\alpha)(t)=\alpha(wtw^{-1})$ for $t\in T$.

For a positive root $\alpha$, let $G_{\alpha}=C_G(\ker \alpha)$, $N_{\alpha}=N \cap G_{\alpha}$, $A_{\alpha}=A\cap G_{\alpha}$, and let $\bar{N}_{\alpha}$ be the opposite of $N_{\alpha}$. For $w\in W_G$, let $N_{w}=w^{-1}\bar{N} w\cap N$.

To compute $c_1(s)$ and $c_2(s)$, we proceed as in \cite{J2} Chapter 3, $\S 2.4$. First choose an element $w'\in W_{M_r}:=N_{M_r}(T)/C_{M_r}(T)$ which is given by
$$w'=\begin{pmatrix} 0& 1_{r-1} &&&& \\ 1 & 0&&&& \\ && 1_{n-r} &&& \\ &&& 0& 1_{r-1} &\\ &&& 1 &0& \\ &&&&& 1_{n-r} \end{pmatrix}.$$
Let $w_1^*=w' w_1$, $w_2^*=w' w_2$.

Then
$$\begin{aligned}
&c_{1}(s)=\int_{N_{w_1^*}(\BA)}f^0(w_1^* u,s)du=\prod_v \int_{N_{w_1^*}(F_v)}f^0(w_1^* u,s)du,\\
&c_{2}(s)=\int_{N_{w_2^*}(\BA)}f^0(w_2^* u,s)du=\prod_v \int_{N_{w_2^*}(F_v)}f^0(w_2^* u,s)du.
\end{aligned}$$

By \cite{Lai} Proposition 4.4, we have
$$\begin{aligned}
&\int_{N_{w_1^*}(F_v)}f^0(w_1^* u,s)du=\prod_{\alpha \in \Sigma^{+}(w_1^*)}\int_{\bar{N}_{\alpha}(F_v)}f^0_{\alpha}(u,s)du,\\
&\int_{N_{w_2^*}(F_v)}f^0(w_2^* u,s)du=\prod_{\alpha \in \Sigma^{+}(w_2^*)}\int_{\bar{N}_{\alpha}(F_v)}f^0_{\alpha}(u,s)du,
\end{aligned}$$
where $f^0_{\alpha}$ is the restriction of $f^0$ to $G_{\alpha}$.

It remains to compute
$$M_{\alpha,v}(s):=\int_{\bar{N}_{\alpha}(F_v)}f^0_{\alpha}(u,s)du$$
for $\alpha \in \Sigma^{+}(w_1^*)\cup \Sigma^{+}(w_2^*)$.

Note that each $G_{\alpha,v}$ is of semi-simple $F_v$-rank 1, so we can apply the formula in \cite{Lai} $\S 3$ to compute $M_{\alpha,v}(s)$. See the proof of \cite{Tan2} Proposition 2.1 for relevant computations.\\

First we record the following lemma which will be useful in our later computation.
\begin{lem}\label{Lem 7.5}
For $f \in I^{n,r}_v(s)$, we have
$$f \in \Ind_{B_v}^{G_v}(\prod_{1\leq i \leq r}\|a_i\|^{s+i-\frac{r+1}{2}}\prod_{r+1\leq i \leq n}\|a_i\|^{-n+i-\frac{1}{2}}),$$
where $\prod_{1\leq i \leq r}\|a_i\|^{s+i-\frac{r+1}{2}}\prod_{r+1\leq i \leq n}\|a_i\|^{-n+i-\frac{1}{2}}$ denotes the character of the Levi factor $A_v$ of $B_v$ given by
$$\mathrm{diag}(a_1,\ldots, a_n, \bar{a}_1^{-1}, \ldots, \bar{a}_n^{-1})\mapsto \prod_{1\leq i \leq r}\|a_i\|^{s+i-\frac{r+1}{2}}\prod_{r+1\leq i \leq n}\|a_i\|^{-n+i-\frac{1}{2}}.$$
\end{lem}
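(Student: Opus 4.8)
The plan is to deduce this from the definition of $I^{n,r}_v(s)$ together with a bookkeeping of modulus characters; there is no real difficulty, only care with normalizations. First I would rewrite the defining transformation law of a section in terms of the Borel $B_v=A_vN_v$. By definition $f\in I^{n,r}_v(s)$ satisfies $f(pg,s)=|a_r(p)|^{s+\frac{2n-r}{2}}f(g,s)$ for $p\in P_r(F_v)$, where $|a_r(u\,m_r(a,g'))|=\|\det a\|$ and $m_r(a,g')$ runs over the Levi $M_r\cong R_{E/F}GL_r\times U(n-r,n-r)$. Since $\frac{2n-r}{2}$ is exactly the half-exponent occurring in the modulus character $\delta_{P_r}\bigl(m_r(a,g')\bigr)=\|\det a\|^{2n-r}$ of $P_r$, this exhibits $I^{n,r}_v(s)$ as the normalized induction from $P_r(F_v)$ of the character $\|\det a\|^{s}$ of $GL_r$, inflated trivially to $U(n-r,n-r)$. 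In particular every such $f$ is left invariant under $N_r(F_v)$ and under $U(n-r,n-r)(F_v)$, so for $b\in B_v$ the ratio $f(bg,s)/f(g,s)$ depends only on the $GL_r$-component of the image of $b$ in $M_r$. If $t=\mathrm{diag}(a_1,\ldots,a_n,\bar{a}_1^{-1},\ldots,\bar{a}_n^{-1})$ denotes the torus part of $b$, that component is $\mathrm{diag}(a_1,\ldots,a_r)$, and therefore $f(bg,s)=\prod_{i=1}^{r}\|a_i\|^{s+\frac{2n-r}{2}}f(g,s)$.

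Next I would pass to the normalized convention for induction from $B_v$ by dividing out $\delta_{B_v}^{1/2}$. Using the explicit positive system $\Sigma^{+}=\{x_i\pm x_j:i<j\}\cup\{2x_k:1\leq k\leq n\}$ recorded in the Appendix, together with the $F$-dimensions of the root groups of $U(n,n)$ — those attached to $x_i\pm x_j$ are $E$-valued, hence two-dimensional, while those attached to $2x_k$ are one-dimensional — a short computation gives
$$\delta_{B_v}^{1/2}\bigl(\mathrm{diag}(a_1,\ldots,a_n,\bar{a}_1^{-1},\ldots,\bar{a}_n^{-1})\bigr)=\prod_{k=1}^{n}\|a_k\|^{\,n-k+\frac12}.$$
Combining with the previous step, $f(bg,s)=\mu(t)\,\delta_{B_v}^{1/2}(t)\,f(g,s)$ with
$$\mu(t)=\prod_{i=1}^{r}\|a_i\|^{\,s+\frac{2n-r}{2}-n+i-\frac12}\ \prod_{i=r+1}^{n}\|a_i\|^{\,-n+i-\frac12}=\prod_{1\leq i\leq r}\|a_i\|^{\,s+i-\frac{r+1}{2}}\ \prod_{r+1\leq i\leq n}\|a_i\|^{\,-n+i-\frac12},$$
since $\frac{2n-r}{2}-n-\frac12=-\frac{r+1}{2}$. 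This is precisely the character in the statement, so $f\in\Ind_{B_v}^{G_v}$ of it, as claimed.

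The only point that requires attention — and hence the likeliest place for a slip — is to keep the two normalizations consistent: the exponent $\frac{2n-r}{2}$ in the definition of $I^{n,r}(s)$ already incorporates $\delta_{P_r}^{1/2}$, the induction from $B_v$ is likewise normalized, and throughout $\|\cdot\|$ denotes $|N_{E/F}(\cdot)|_v$ rather than the $F_v$-absolute value, which is what forces the roots $2x_k$ to contribute with multiplicity one. Once these conventions are pinned down compatibly with the normalization of $|\cdot|^{s}$ used in the definition of $I^{n,r}(s)$, the identity is immediate; alternatively one may invoke the Gindikin--Karpelevich set-up of \cite{PSR} $\S5.3$ or \cite{Lai} to organize the same bookkeeping. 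The normalization is confirmed by the smallest cases, e.g. $U(1,1)$ with $r=1$, and the Siegel case $r=n$, where $\mu(t)=\prod_{i=1}^{n}\|a_i\|^{\,s+i-\frac{n+1}{2}}$ and $\prod_{i}\|a_i\|=\|\det a\|$.
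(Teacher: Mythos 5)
Your proposal is correct and follows essentially the same route as the paper's proof: unwind the transformation law $f(pg,s)=|a_r(p)|^{s+\frac{2n-r}{2}}f(g,s)$ on the torus, factor out $\delta_{B_v}^{1/2}(a)=\prod_{k}\|a_k\|^{n-k+\frac12}$, and read off the resulting character, with the exponent arithmetic $\frac{2n-r}{2}-n-\frac12=-\frac{r+1}{2}$ matching the paper's computation exactly. The only difference is cosmetic: you derive $\delta_{B_v}$ from the root data (with the correct $F$-dimensions $2$ for $x_i\pm x_j$ and $1$ for $2x_k$), whereas the paper simply records $\delta_B(a)=\prod_k\|a_k\|^{2n-2k+1}$.
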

\begin{proof}
For simplicity, we omit the subscript $v$ from notation. Recall that for a character $\chi$ of the Levi factor $A$ of $B$, the normalized induced representation $\Ind_{B}^{G}(\chi)$ is the space of all smooth functions $f: G \ra \BC$ such that
$$ f(a g)=\delta_B^{\frac{1}{2}}(a)\chi(a)f(g)$$
for all $a\in A$ and $g\in G$,
where $\delta_B$ is the modular character of $B$ given by
$$ a \mapsto \prod_{1\leq i \leq n}\|a_i\|^{2n-2i+1}$$
for $a=\mathrm{diag}(a_1,\ldots, a_n, \bar{a}_1^{-1}, \ldots, \bar{a}_n^{-1})\in A$.

For $a=\mathrm{diag}(a_1, \ldots, a_n, \bar{a}_1^{-1}, \ldots, \bar{a}_n^{-1}) \in A$, we have $a= m_r(a',g') \in M_r\cong GL_r(E) \times G_{n-r}$, where $a'=\mathrm{diag}(a_1, \ldots, a_r) \in GL_r(E)$. Then for $f \in I^{n,r}(s)$, we have
$$\begin{aligned}
f(ag)
&=\|a_1 \cdots a_r\|^{s+n-\frac{r}{2}}f(g)\\
&=\delta_B^{1/2}(a)\left(\delta_B^{-1/2}(a)\|a_1 \cdots a_r\|^{s+n-\frac{r}{2}}f(g)\right)\\
&=\delta_B^{1/2}(a)\left(\prod_{1\leq i \leq r}\|a_i\|^{s+i-\frac{r+1}{2}} \prod_{r+1\leq i \leq n}\|a_i\|^{-n+i-\frac{1}{2}}\right)f(g),
\end{aligned}$$
for all $a\in A$ and $g\in G$.

The desired result then follows.
\end{proof}

Now we compute $c_1(s)$ and $c_2(s)$.
First we deal with $c_1(s)$.

For $w_1^*=w' \cdot w_1$, we have
$$\Sigma^{+}(w_1^*)=\{x_i-x_{r+1}|1\leq i \leq r\}.$$

For $\alpha=x_i-x_{r+1}$ with $1\leq i \leq r$, we have $G_{\alpha,v}\cong GL_2(E_v)\times (E_v^{\times})^{n-2}$. As in the proof of \cite{Tan2} Proposition 2.1, we can reduce our computation to the derived group $G_{\alpha}'$ of $G_{\alpha}$ since it yields the same $M_{\alpha}(s)$. We have $G_{\alpha,v}'\cong SL_2(E_v)$, and $A_{\alpha,v}'$ is the subgroup of $A_v\cong (E_v^{\times})^n$ whose elements have 1 along the diagonals except the $i$-th and the $(r+1)$-th entries where they are $x$ and $x^{-1}$ respectively for some $x\in E_v^{\times}$. By Lemma \ref{Lem 7.5}, we have
$$ f^0_{\alpha,v}\in \Ind_{B_{\alpha,v}'}^{G_{\alpha,v}'}(\|x\|^{s+i-\frac{r+1}{2}} \cdot \|x^{-1}\|^{-n+r+1-\frac{1}{2}})=\Ind_{B_{\alpha,v}'}^{G_{\alpha,v}'} \|x\|^{s+i+n-\frac{3r}{2}-1}.$$
Then by the formula in \cite{Lai} $\S 3.5$, we have
$$M_{\alpha,v}(s)=\vol(\CO_{E_v})\frac{\zeta_{E_v}(s+i+n-\frac{3r}{2}-1)}{\zeta_{E_v}(s+i+n-\frac{3r}{2})},$$
where we take $\vol(\CO_{E_v})=1$ if $v$ is a real place, and we take $\vol(\CO_{E_v})=\vol(\CO_v)^2$ and $\zeta_{E_v}(s)=\zeta_{F_v}(s)^2$ if $v$ is finite split.

So
$$c_{1,v}(s):=\prod_{\alpha \in \Sigma^{+}(w_1^*)} M_{\alpha,v}(s)=
\vol(\CO_{E_v})^r \frac{\zeta_{E_v}(s+n-\frac{3r}{2})}{\zeta_{E_v}(s+n-\frac{r}{2})},$$
where we take $\vol(\CO_{E_v})=1$ if $v$ is a real place.

Thus
$$\begin{aligned}
c_1(s)
&=\prod_{v\nmid \infty}\vol(\CO_{E_v})^r \frac{\zeta_E(s+n-\frac{3r}{2})}{\zeta_E(s+n-\frac{r}{2})}\\
&=|D_E|^{-r/2}\frac{\zeta_E(s+n-\frac{3r}{2})}{\zeta_E(s+n-\frac{r}{2})}\\
&=\frac{\xi_E(s+n-\frac{3r}{2})}{\xi_E(s+n-\frac{r}{2})}.
\end{aligned}$$\\

Next we deal with $c_2(s)$. We have
$$\Sigma^{+}(w_2^*)=
\begin{cases}
\{x_i+x_r|1\leq i \leq r-1\}\cup \{2x_r\}\cup \{ x_r+x_{j}, x_r-x_{j}| r+1 \leq j \leq n\}& \text{if $r\leq n-1$},\\
\{x_i+x_n|1\leq i \leq n-1\}\cup \{2x_n\} & \text{if $r=n$}.
\end{cases}$$

For $\alpha=x_i+x_r$ with $1\leq i \leq r-1$, we check that $G_{\alpha,v}\cong GL_2(E_v)\times (E_v^{\times})^{n-2}$. As before, we can reduce our computation to the derived group $G_{\alpha}'$ of $G_{\alpha}$. We have $G_{\alpha,v}'\cong SL_2(E_v)$ and $A_{\alpha,v}'$ is the subgroup of $A_v\cong (E_v^{\times})^n$ whose elements have 1 along the diagonals except the $i$-th and $r$-th entries where they are $x$ and $\bar{x}$ respectively for some $x\in E_v^{\times}$. By Lemma \ref{Lem 7.5}, we have
$$f^0_{\alpha,v}\in \Ind_{B_{\alpha,v}'}^{G_{\alpha,v}'}(\|x\|^{s+i-\frac{r+1}{2}}\cdot \|\bar{x}\|^{s+r-\frac{r+1}{2}})
=\Ind_{B_{\alpha,v}'}^{G_{\alpha,v}'}\|x\|^{2s+i-1}.$$
Then by the formula in \cite{Lai} $\S 3.5$, we have
$$M_{\alpha,v}(s)=\vol(\CO_{E_v}) \frac{\zeta_{E_v}(2s+i-1)}{\zeta_{E_v}(2s+i)}.$$

For $\alpha=2x_r$, we have $G_{\alpha,v}\cong U(1,1)(F_v)\times (E_v^{\times})^{n-1}$, $G_{\alpha,v}'\cong SL_2(F_v)$, and $A_{\alpha,v}'$ is the subgroup of $A_v\cong (E_v^{\times})^n$ whose elements have 1 along the diagonals except the $r$-th entry where it can be any $x\in F_v^{\times}$.  By Lemma \ref{Lem 7.5} we have
$$\Phi^0_{\alpha}\in \Ind_{B_{\alpha,v}'}^{G_{\alpha,v}'}\|~\|^{s+r-\frac{r+1}{2}}=\Ind_{B_{\alpha,v}'}^{G_{\alpha,v}'}|x|^{2s+r-1}.$$
Then by the formula in \cite{Lai} $\S 3.3$, we have
$$M_{\alpha,v}(s)=\vol(\CO_v)\frac{\zeta_{F_v}(2s+r-1)}{\zeta_{F_v}(2s+r)} .$$

For $\alpha=x_r+x_j$ with $r+1\leq j \leq n$, we have $G_{\alpha,v}\cong GL_2(E_v)\times (E_v^{\times})^{n-2}$,  $G_{\alpha,v}'\cong SL_2(E_v)$, and $A_{\alpha,v}'$ is the subgroup of $A_v\cong (E_v^{\times})^n$ whose elements have 1 along the diagonals except the $r$-th and the $j$-th entries where they are $x$ and $\bar{x}$ respectively for some $x\in E_v^{\times}$. By Lemma \ref{Lem 7.5}, we have
$$ f^0_{\alpha,v}\in \Ind_{B_{\alpha,v}'}^{G_{\alpha,v}'}(\|x\|^{s+r-\frac{r+1}{2}}\cdot \|\bar{x}\|^{-n+j-\frac{1}{2}})
=\Ind_{B_{\alpha,v}'}^{G_{\alpha,v}'}\|x\|^{s+j-n+\frac{r}{2}-1}.$$
Then by the formula in \cite{Lai} $\S 3.5$, we have
$$M_{\alpha,v}(s)=\vol(\CO_{E_v})\frac{\zeta_{E_v}(s+j-n+\frac{r}{2}-1)}{\zeta_{E_v}(s+j-n+\frac{r}{2})}.$$

For $\alpha=x_r-x_j$ with $r+1\leq j \leq n$, we have $G_{\alpha,v}\cong GL_2(E_v)\times (E_v^{\times})^{n-2}$, $G_{\alpha,v}'\cong SL_2(E_v)$, and $A_{\alpha,v}'$ is the subgroup of $A_v\cong (E_v^{\times})^n$ whose elements have 1 along the diagonals except the $r$-th and the $j$-th entries where they are $x$ and $x^{-1}$ respectively for some $x\in E_v^{\times}$. By Lemma \ref{Lem 7.5}, we have
$$ f^0_{\alpha,v}\in \Ind_{B_{\alpha,v}'}^{G_{\alpha,v}'}(\|x\|^{s+r-\frac{r+1}{2}}\cdot \|x^{-1}\|^{-n+j-\frac{1}{2}})
=\Ind_{B_{\alpha,v}'}^{G_{\alpha,v}'}\|x\|^{s-j+n+\frac{r}{2}}.$$
Then by the formula in \cite{Lai} $\S 3.5$, we have
$$M_{\alpha,v}(s)=\vol(\CO_{E_v})\frac{\zeta_{E_v}(s-j+n+\frac{r}{2})}{\zeta_{E_v}(s-j+n+\frac{r}{2}+1)}.$$

Thus
$$\begin{aligned}
c_{2,v}(s):
&=\prod_{\alpha \in \Sigma^{+}(w_2^*)}M_{\alpha,v}(s)\\
&=\vol(\CO_{E_v})^{2n-r-1}\cdot \vol(\CO_v)\frac{\zeta_{E_v}(2s)}{\zeta_{E_v}(2s+r-1)} \frac{\zeta_{F_v}(2s+r-1)}{\zeta_{F_v}(2s+r)} \frac{\zeta_{E_v}(s-n+\frac{3r}{2})}{\zeta_{E_v}(s+n-\frac{r}{2})},
\end{aligned}$$
and
$$\begin{aligned}
c_2(s)
&=\prod_v c_{2,v}(s)\\
&=\frac{\xi_E(2s)}{\xi_E(2s+r-1)} \frac{\xi_F(2s+r-1)}{\xi_F(2s+r)} \frac{\xi_E(s-n+\frac{3r}{2})}{\xi_E(s+n-\frac{r}{2})}.
\end{aligned}$$\\

In summary, we have the following
\begin{prop}\label{Prop 7.6}
Let $f^0$ be the spherical section in $I^{n,r}(s)$ with $f^0(1)=1$. Then
$$\begin{aligned}
& \CU_{1}(s)f^0=c_{1}(s)f^0,\\
& \CU_{2}(s)f^0=c_{2}(s)f^0,
\end{aligned}$$
where
$$\begin{aligned}
&c_{1}(s)=\frac{\xi_E(s+n-\frac{3r}{2})}{\xi_E(s+n-\frac{r}{2})},\\
&c_{2}(s)=\frac{\xi_E(2s)}{\xi_E(2s+r-1)} \frac{\xi_F(2s+r-1)}{\xi_F(2s+r)} \frac{\xi_E(s-n+\frac{3r}{2})}{\xi_E(s+n-\frac{r}{2})}.
\end{aligned}$$

\end{prop}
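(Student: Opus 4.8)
The plan is to obtain Proposition \ref{Prop 7.6} by assembling the local Gindikin--Karpelevich computations carried out above into a single global statement. The starting point is that $\CU_1(s)$ and $\CU_2(s)$ are standard intertwining operators attached to the Weyl elements $w_1$ and $w_2$: they are defined by integration over the unipotent subgroups $N_{w_i}$ and they carry $K_G$-fixed vectors to $K_G$-fixed vectors. Since the space of spherical vectors in each (degenerate principal series) representation occurring here is one-dimensional, $\CU_i(s)$ must send the normalized spherical section $f^0$ to $c_i(s)$ times the normalized spherical section of the target, where the scalar is recovered by evaluating at the identity, $c_i(s) = \CU_i(s)f^0(1)$. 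Everything is then reduced to computing the two numbers $c_1(s)$ and $c_2(s)$.

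Because $f^0 = \otimes_v f^0_v$ is factorizable, each $c_i(s)$ is an Euler product $\prod_v c_{i,v}(s)$ of the local integrals over $N_{w_i}(F_v)$. Following \cite{PSR} \S 5.3, one introduces the auxiliary Weyl element $w' \in W_{M_r}$ and replaces $w_i$ by $w_i^* = w' w_i$; since $w'$ lies in the Levi $M_r$ and the inducing character of $I^{n,r}$ only sees $\|\det a\|$ on the $GL_r$-factor, one checks that $c_i(s)$ is equally computed as $\int_{N_{w_i^*}(\BA)} f^0(w_i^* u,s)\,du$, and this arranges the integral so that the Langlands/Gindikin--Karpelevich factorization (\cite{Lai} Proposition 4.4) reads $c_{i,v}(s) = \prod_{\alpha \in \Sigma^+(w_i^*)} M_{\alpha,v}(s)$ with $M_{\alpha,v}(s) = \int_{\bar N_\alpha(F_v)} f^0_{\alpha,v}(u,s)\,du$. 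One then enumerates $\Sigma^+(w_1^*) = \{x_i - x_{r+1} : 1 \le i \le r\}$ and the longer list for $w_2^*$ (the short roots $x_i + x_r$ and $x_r \pm x_j$, together with the single long root $2x_r$), uses Lemma \ref{Lem 7.5} to read off for each $\alpha$ the exact character by which $f^0_{\alpha,v}$ transforms on the derived rank-one subgroup $G_{\alpha,v}'$ (which is $SL_2(E_v)$ for the short roots and $SL_2(F_v)$ for $2x_r$), and applies the rank-one formulae of \cite{Lai} \S 3 to express each $M_{\alpha,v}(s)$ as a ratio of local zeta values. Multiplying over all $v$, combining the non-archimedean, split, and real volume normalizations into the discriminant powers $|D_E|^{\pm r/2}$, and recognizing the resulting products of $\zeta_{E_v}$ and $\zeta_{F_v}$ as completed $\xi$-functions, yields the stated formulae for $c_1(s)$ and $c_2(s)$ for $\Re(s) \gg 0$, and the general case follows by meromorphic continuation.

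The main obstacle is purely bookkeeping: getting the root sets $\Sigma^+(w_1^*)$ and $\Sigma^+(w_2^*)$ exactly right (in particular isolating the long root $2x_r$, which is responsible for the asymmetric factor $\xi_F(2s+r-1)/\xi_F(2s+r)$ in $c_2(s)$, and splitting the roots $x_r \pm x_j$ by sign), keeping the character exponents produced by Lemma \ref{Lem 7.5} precise for every $\alpha$, and matching the local volume factors at split and archimedean places so that the product telescopes cleanly into $\xi_E$ and $\xi_F$. There is no analytic difficulty beyond the standard absolute convergence of the intertwining integrals for $\Re(s)$ large.
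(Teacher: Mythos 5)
Your proposal is correct and follows essentially the same route as the paper: reduction to the scalar $c_i(s)=\CU_i(s)f^0(1)$ via uniqueness of the spherical vector, Euler factorization, passage to $w_i^*=w'w_i$, the Gindikin--Karpelevich factorization of \cite{Lai} over $\Sigma^+(w_i^*)$, Lemma \ref{Lem 7.5} to identify the rank-one characters (with the long root $2x_r$ giving the $\xi_F$ factor), and the volume/discriminant bookkeeping assembling the local zeta factors into $\xi_E$ and $\xi_F$.
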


Proposition \ref{Prop 4.1} then follows immediately from Proposition \ref{Prop 7.4} and Proposition \ref{Prop 7.6}.


\begin{thebibliography}{99}
\addtocontents{Bibliography}

\bibitem[Ad]{Ad}
Adams, J.: {\sl The theta correspondence over $\BR$}. Harmonic analysis, group representations, automorphic forms and invariant theory: in honor of Roger E.Howe, 1-39, World Scientific, Singapore (2007)

\bibitem[GI]{GI}
Gan, W.T., Ichino, A.: {\sl On endoscopy and the refined Gross-Prasad conjecture for $(SO_5, SO_4)$}. J. Inst. Math. Jussieu 10, 235-324 (2011)

\bibitem[GQT]{GQT}
Gan, W.T., Qiu, Y., Takeda, S.: {\sl The regularized Siegel-Weil formula (the second term identity) and the Rallis inner product formula}. Preprint (2012)

\bibitem[GT]{GT}
Gan, W.T., Takeda, S.: {\sl On the regularized Siegel-Weil formula (second term identity) and non-vanishing of theta lifts from orthogonal groups}. J.reine angew. Math. 659, 175-244 (2011)

\bibitem[HKS]{HKS}
Harris, M., Kudla, S.S., Sweet, W.J.: {\sl Theta dichotomy for unitary groups}. J. Am. Math. Soc. 9, 941-1004 (1996)

\bibitem[Ich]{Ich}
Ichino, A.: {\sl A regularized Siegel-Weil formula for unitary groups}. Math. Z. 247, 241-277 (2004)

\bibitem[J1]{J1}
Jiang, D.: {\sl Degree 16 standrad L-function of $GSp(2)\times GSp(2)$}. Mem. Am. Math. Soc. 588 (1996)

\bibitem[J2]{J2}
Jiang, D.: {\sl The first term identities for Eisenstein series}. J. Number Theory 70, 67-98 (1998)

\bibitem[K1]{K1}
Kudla, S.S.: {\sl Splitting metaplectic covers of dual reductive pairs}.
Israel J. Math. 87, 361-401 (1992)

\bibitem[K2]{K2}
Kudla, S.S.: {\sl Some extensions of the Siegel-Weil formula}. Eisenstein series and applications, 205-237, Progress in Mathematics 258, Birkhauser, Boston (2008)

\bibitem[KR1]{KR1}
Kudla, S.S., Rallis, S.: {\sl Poles of Eisenstein series and L-functions}. Festschrift in honor of I.I.Piatetski-Shapiro, Israel Math. Conf. Proc. 3, 81-110 (1990)

\bibitem[KR2]{KR2}
Kudla, S.S., Rallis, S.: {\sl A regularized Siegel-Weil formula: The first term identity}. Ann. Math. 140, 1-80 (1994)

\bibitem[KRS]{KRS}
Kudla, S.S.,Rallis, S., Soudry, S.: {\sl On the degree 5 L-functions for Sp(2)}. Invent.Math. 107, 483-541 (1992)

\bibitem[Lai]{Lai}
Lai, K.F.: {\sl Tamagawa numbers of reductive algebraic groups}. Composito Math. 41, 153-188 (1980)

\bibitem[LR]{LR}
Lapid, E., Rallis, S.: {\sl On the local factors of representations of classical groups}. Automorphic representations, $L$-functions and applications: progress and prospects, 309-359, de Gruyter, Berlin (2005)

\bibitem[Li]{Li}
Li, J-S.: {\sl Non-vanishing theorems for the cohomology of
certain arithmetic quotients}. J. reine angew. Math. 428, 177-217 (1992)

\bibitem[PSR]{PSR}
Piatetski-Shapiro, I., Rallis, S.: {\sl $L$-functions for the classical groups}. Explicit construction of automorphic L-functions, 1-52, Lecture Notes in Mathematics 1254, Springer-Verlag, Berlin (1987)

\bibitem[R]{R}
Rallis, S.: {\sl Injectivity properties of liftings associated to Weil representations}. Composito Math. 52, 139-169 (1984)

\bibitem[Tan1]{Tan1}
Tan, V.: {\sl A regularized Siegel-Weil formula on U(2,2) and U(3)}. Duke Math. J. 94,341-378 (1998)

\bibitem[Tan2]{Tan2}
Tan, V.: {\sl Poles of Siegel Eisenstein series on $U(n,n)$}. Canad. J. Math. 51(1), 164-175 (1999)

\bibitem[W]{W}
Weil, A.: {\sl Sur la formule de Siegel dans la th\'eorie des groupes
classiques}. Acta. Math. 113, 1-87 (1965)

\bibitem[X1]{X1}
Xiong, W.: {\sl A regularized Siegel-Weil formula on $U(1,1)$ and application to theta lifting from $U(1)$ to $U(1,1)$}. J. Number Theory 133, 1111-1134 (2013)

\bibitem[X2]{X2}
Xiong, W.: {\sl Constant terms of spherical Eisenstein series on quasi-split unitary groups}. Science China Mathematics (to appear)

\bibitem[Yam]{Yam}
Yamana, S.: {\sl On the Siegel-Weil formula: The case of singular forms}. Composito Math. 147, 1003-1021 (2011)

\end{thebibliography}
\end{document}